\numberwithin{equation}{section}
\newtheorem{prop}{Proposition}[section]
\newtheorem{theorem}{Theorem}[section]
\newtheorem*{theorem*}{Theorem A}
\newtheorem{lemma}{Lemma}[section]
\newtheorem*{definition}{Definition}
\newtheorem{remark}{Remark}[section]    
\numberwithin{equation}{section}
\newtheorem{cor}[theorem]{Corollary}
\newcommand{\beq}{\begin{equation}}
\newcommand{\eeq}{\end{equation}}
\begin{document}

\title[Global regularity for Monge-Ampere equation in polytopes]
{Global regularity for the Dirichlet problem of Monge-Amp\`ere equation in convex polytopes}
\author[G. Huang and W. Shen]
{Genggeng Huang and Weiming Shen}
\address[Genggeng Huang]
{School of Mathematical Sciences, Fudan University, Shanghai 200433, China.}
\email{genggenghuang@fudan.edu.cn}

\address[Weiming Shen]{School of Mathematical Sciences,
Capital Normal University,
Beijing, 100048, China}
\email{wmshen@aliyun.com}

\thanks{The first author is partially supported by NSFC 12141105. The second author is partially supported by NSFC 12371208.}

\subjclass[2020]{}

\keywords{Monge-Amp\`ere equation, polytopes, global regularity.}
\date{}
\maketitle

\begin{abstract}We study the Dirichlet problem for Monge-Amp\`ere equation in bounded convex polytopes. We give sharp conditions  for the existence of global $C^2$ and $C^{2,\alpha}$ convex solutions  provided that
a global $C^2$, convex subsolution exists.
\end{abstract}

\section{Introduction}
Consider the following Monge-Amp\`ere equation with Dirichlet boundary condition
\begin{equation}\label{intro-1}
\begin{split}
\det D^2 u=&f(x),\quad \text{in}\quad \Omega,\\
u=&\varphi(x),\quad \text{on}\quad \partial  \Omega.
\end{split}
\end{equation}
There has been a long history for the study of Monge-Amp\`ere equation due to its important applications in geometry, fluid dynamics, finance mathematics and computor sciences.
\par The Dirichlet boundary problem of Monge-Amp\`ere equation is a classical problem. Usually, people call \eqref{intro-1} non-degenerate if  $f\ge c_0>0$ in $\Omega$.
On smooth and uniformly convex domains, under the assumptions $0<f\in C^{1,1}(\overline{\Omega})$ and $\varphi\in C^3(\overline{\Omega})$,  the global $C^{2,\alpha}$ estimates are obtained by Ivockina \cite{Iv1980}, Krylov \cite{Kry1983} and Caffarelli-Nirenberg-Spruck \cite{CaffarelliNirenbergSpruck1984}. One of the key step  is the derivation of the estimate of $\|u\|_{C^{1,1}(\overline\Omega)}$ in the above works where one needs to differentiate \eqref{intro-1} twice. Hence, the condition $f\in C^{1,1}(\overline{\Omega})$ cannot be removed which is different from the Laplace equations. It is an interesting and important question to formulate sharp conditions on the right-hand side and boundary data for global $C^{2,\alpha}$ estimates.  Wang \cite{Wang1996} relaxed the conditions to $f\in C^{0,1}(\overline{\Omega})$ and $\partial\Omega\in C^3$, $\varphi\in C^{3}(\overline{\Omega})$. He also constructed examples to show that $\partial\Omega\in C^3$ and $\varphi\in C^{3}(\overline{\Omega})$ are optimal. For uniformly convex domains, under the sharp conditions, global $C^{2,\alpha}$ estimates were obtained by Trudinger-Wang \cite{TW2008}. For smooth convex domains, under the sharp conditions, pointwise $C^{2,\alpha}$ estimates up to the boundary were proved by Savin \cite{Savin2013}.
\par An important assumption in the above work is that $\Omega$ is smooth and convex. Moreover, in the work of \cite{CaffarelliNirenbergSpruck1984}, the uniform convexity of the domain $\Omega$ is used to guarantee the existence of smooth sub-solutions of \eqref{intro-1}.  Hence, Guan-Spruck \cite{GuanSpruck1993} first obtained the global $C^{2,\alpha}$ estimate by assuming the existence of a convex strict subsolution $\underline{u}\in C^2(\overline{\Omega})$ for general smooth domain $\Omega$ without convexity. The strictness of subsolution $\underline u$ was removed by Guan \cite{Guan1998}.

The global $C^{2,\alpha}$ estimates for all the above works are under the assumptions that the domains $\Omega$ are smooth enough.  Recently, Le-Savin \cite{LeSavin2023} made a first attempt on the global $C^{2,\alpha}$ estimates in two dimensional polygons.
They proved the following result.
\begin{theorem*} [Theorem 1.1 \cite{LeSavin2023}]  Let $\Omega$ be a bounded convex polygonal domain in $\mathbb R^2$. Let $u$ be a convex function that solves the Dirichlet problem for the Monge-Amp\`ere equation \eqref{intro-1}. Assume that for some $\beta\in(0,1)$,
\begin{equation*}
f\in C^\beta(\overline{\Omega}),\quad f>0,\quad \varphi\in C^{2,\beta}(\partial\Omega),
\end{equation*}
and there exists a globally $C^2$, convex, strictly sub-solution $\underline u\in C^2(\overline{\Omega})$ to \eqref{intro-1}. Then, $u\in C^{2,\alpha}(\overline\Omega)$ for some $\alpha>0$. The constant $\alpha$ and the global $C^{2,\alpha}$ norm $\|u\|_{C^{2,\alpha}(\overline{\Omega})}$ depend on $\Omega$, $\beta$, $\min_{\overline\Omega}f$, $\|f\|_{C^\beta(\overline{\Omega})}$, $\|\varphi\|_{C^{2,\beta}(\partial\Omega)}$, $\|\underline u\|_{C^2(\overline\Omega)}$, and the differences $\det D^2{\underline u}-f$ at the vertices of $\Omega$.
\end{theorem*}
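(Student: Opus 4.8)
The plan is to split $\partial\Omega$ into its relatively open edges and its finitely many vertices, prove a $C^{2,\alpha}$ estimate near each of these two types of boundary points by the appropriate form of Savin's boundary theory for the Monge-Amp\`{e}re equation, and then glue these to the interior estimate. Throughout, the subsolution serves as a lower barrier: since $\det D^2\underline u\ge f=\det D^2 u$ and $\underline u=u=\varphi$ on $\partial\Omega$, the comparison principle gives $\underline u\le u$ in $\Omega$; and since $0<\min f\le\det D^2\underline u$ while $\|D^2\underline u\|\le\|\underline u\|_{C^2(\overline\Omega)}$, the function $\underline u$ is uniformly convex, $D^2\underline u\ge c_0 I$, which in particular forces the restriction of $\varphi$ to each edge to be a uniformly convex $C^{2,\beta}$ function of arclength.

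First I would carry out some preliminaries: $u\in C^1(\overline\Omega)$ and $u$ strictly convex up to $\overline\Omega$ (using $\underline u$ from below and suitable supersolutions built from the convexity of $\Omega$ from above), so that the boundary sections $S_h(y)=\{x\in\overline\Omega:\ u(x)<u(y)+\nabla u(y)\cdot(x-y)+h\}$ are well defined. The first substantive step is to show that every $S_h(y)$ is \emph{balanced}, i.e. comparable to an ellipse of area $\sim h$ with controlled eccentricity. For $y$ in the interior of an edge this is Savin's localization property at the boundary: the edge is flat, and the uniform convexity of $\varphi$ along the edge together with $\det D^2 u\in[\min f,\max f]$ provide the quadratic separation of $u$ from its tangent plane at $y$ from both sides. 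For a vertex $p$ a direct argument suffices: since $u=\underline u$ on both edges through $p$ and $u-\underline u\ge0$ nearby, the two independent edge directions force $\nabla u(p)=\nabla\underline u(p)$; writing $\ell_p$ for this common tangent plane one gets $u-\ell_p\ge\underline u-\ell_p\ge\tfrac{c_0}{2}|x-p|^2$, while $u-\ell_p$ growing quadratically with positive coefficient along each edge forces $S_h(p)$ to contain a fixed triangle of area $\sim h$; hence $S_h(p)$ is trapped between two discs of radius $\sim\sqrt h$, with constants depending only on the data in the statement.

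With balanced sections in hand, the interior $C^{2,\alpha}$ estimate of Caffarelli together with Savin's pointwise $C^{2,\alpha}$ boundary estimate (valid because near an edge point the domain is locally a half-plane, $f\in C^\beta$, $\varphi\in C^{2,\beta}$) give $u\in C^{2,\alpha}$ on $\overline\Omega$ away from the vertices. The remaining, and hardest, step is $C^{2,\alpha}$ regularity \emph{at} each vertex $p$, where $\partial\Omega$ is not even $C^1$. Here I would run a blow-up/iteration: rescale $u$ near $p$ by the affine maps normalizing $S_h(p)$ (these are essentially isotropic by balancedness); any limit $v$ is convex, solves $\det D^2 v=f(p)$ in a convex wedge $W$ of opening in $(0,\pi)$, has quadratic growth, and equals a homogeneous quadratic on each edge of $W$. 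A Liouville-type classification then shows $v$ must be a quadratic polynomial: convexity together with $\det D^2 v\equiv f(p)$ forces $D^2 v$ to be two-sided bounded near the vertex, after which $w=v-Q$ (for a quadratic $Q$ with the prescribed edge data and $\det D^2 Q=f(p)$) solves a uniformly elliptic equation, vanishes on $\partial W$, and has quadratic growth, so a Phragm\'{e}n--Lindel\"{o}f argument in the wedge of opening $<\pi$ forces $w\equiv0$. Among the (generically two) such quadratics $Q$, the correct one is singled out by the strict subsolution: comparing the second derivatives of $u$ and $\underline u$ along the wedge directions, the strict gap $\det D^2\underline u(p)-f(p)>0$ forces a definite inequality for the mixed second derivative, which pins down $D^2u(p)$; this is exactly where the stated dependence on $\det D^2\underline u-f$ at the vertices enters. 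Upgrading this qualitative classification to a H\"{o}lder rate $\|D^2u(x)-D^2u(p)\|\le C|x-p|^\alpha$ near $p$ is then the standard compactness--contradiction iteration, and I expect this whole vertex analysis — the wedge Liouville theorem and the selection of the right limiting quadratic — to be the main obstacle.

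Finally I would patch the estimates near the edges and near the vertices together with the interior estimate into a global bound $\|u\|_{C^{2,\alpha}(\overline\Omega)}\le C$, tracking the dependence of $\alpha$ and $C$ on $\Omega$, $\beta$, $\min f$, $\|f\|_{C^\beta(\overline\Omega)}$, $\|\varphi\|_{C^{2,\beta}(\partial\Omega)}$, $\|\underline u\|_{C^2(\overline\Omega)}$, and the vertex gaps $\det D^2\underline u-f$.
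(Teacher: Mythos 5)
Your skeleton — edges via Savin's boundary estimates, vertices via blow-up plus a Liouville classification, then patching — is the same as the paper's (Theorems \ref{mainthm1}--\ref{mainthm2} prove the $n$-dimensional generalization; at $n=2$ conditions (C1)--(C3) are vacuous and (C4) is exactly the strictness of $\underline u$, so the $n=2$ case of Theorem~\ref{mainthm2} is Theorem~A). The preliminaries ($u\ge\underline u$, uniform convexity of $\underline u$, balanced sections) also match. But your vertex analysis has a genuine gap at exactly the hard step.

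The Phragm\'en--Lindel\"of argument does not work for obtuse normalized vertex angles. You set $w=v-Q$, note $w$ satisfies a uniformly elliptic equation, vanishes on $\partial W$, and grows quadratically, and conclude $w\equiv0$ by Phragm\'en--Lindel\"of in a wedge of opening $<\pi$. In a wedge of opening $\tau\pi$ the critical Phragm\'en--Lindel\"of growth is $r^{1/\tau}$; for $\tau>1/2$ this is $<2$, so quadratic growth is supercritical and the conclusion fails. Concretely, after an affine change to the right-angle cone $(\mathbb R_+)^2$ with boundary data $\tfrac{|x|^2}{2}$ and $\det D^2=c\in(0,1)$, both quadratics $\tfrac{|x|^2}{2}\pm\sqrt{1-c}\,x_1x_2$ are admissible blow-up limits, and the equation plus boundary data alone do not distinguish them. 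Your normalization by $S_h(p)$ is isotropic, so the wedge opening is the true vertex angle after normalizing $D^2u(p)=I$, which may be anything in $(0,\pi)$. Moreover your selection step (``the strict gap pins down $D^2u(p)$'') is stated without a mechanism — but at an obtuse vertex the whole content of the theorem is precisely to supply that mechanism. The paper does so by a different route: (i) a one-sided upper bound $u\le\tfrac{|x|^2}{2}+\sqrt{1-c}\,x_1x_2$ valid for all openings, proved by comparing $u$ against $P_{\varepsilon,\sigma}=(\tfrac12+\varepsilon)|x|^2+(\sqrt{1-c}+\sigma)x_1x_2$ and squeezing on $\sigma$ (Lemma~\ref{lemma-liou1}); (ii) when the normalized $\underline u$ sits at the small root $-\sqrt{1-c}$, a Hopf-lemma argument (Case~3 of Lemma~\ref{cd2-pC2-1}) upgrading $u\ge\underline u$ to $u\ge\underline u+\epsilon\,x_1x_2$ before feeding into the rigidity Lemma~\ref{lemma-liou2}. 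Two lesser points: the uniform $C^{1,1}$ bound up to the vertex (which you use implicitly to claim two-sided bounds on $D^2v$) is not automatic and is proved separately (Lemma~\ref{lem-C11}); and the H\"older rate is obtained in the paper not by compactness--contradiction but via explicit barriers $\tfrac{|x|^2}{2}\pm A|x|^{2+\alpha}\phi_{\Sigma'}$ built from the first eigenfunction of an enlarged spherical cap (Theorem~\ref{thm-2+growth}), followed by Schauder rescaling in Section~\ref{sec-C2alpha}.
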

The results of global $C^{2,\alpha}$ regularity for the solutions of Monge-Amp\`ere equation are very few.
Jhaveri \cite{Jhaveri2019} investigated the the global $C^{2,\alpha}$ regularity for the Monge-Amp\`ere equation with the second boundary condition in the planar polygon domain and the high dimensional case was shown by Chen-Liu-Wang in \cite{ChenLiuWang2023}.
Polytopes also naturally arise in the Monge-Amp\`ere equation with Guillemin boundary conditions(\cite{Rubin2015,Huang2023,HuangShen2023}).

In the present paper, we are interested in the global $C^{2,\alpha}$ estimates for the Dirichlet problem of \eqref{intro-1} in polytope domains in all dimensions.  The proof of Theorem A used the complex analysis and the fact that $u_{12}$ solves a second order linear elliptic equation with no zero order terms. Hence, we can not apply the methods in \cite{LeSavin2023} to high dimensional case.
We need some new observations. Before starting our main results, we first introduce some notations.

\begin{itemize}
	\item[$\bullet$] $\mathbf{\Gamma_k(\Omega)}$: $k-$skeleton of $n-$polytope $\Omega$. By $k$-skeleton we mean the collection of faces of dimension at most $k$.
	\item[$\bullet$] {\bf Adjacent}: two $(n-1)-$faces $F,\tilde F$ of $\Omega$ are called adjacent if and only if $F\cap \tilde F$ contains an $(n-2)-$face.
	\item[$\bullet$] {$\mathbf{\mathbb V_k}$}: a cone which is isomorphic to $(\mathbb R_+)^k$.
	\item[$\bullet$] {$\mathbf{V_\mu}$}: $V_\mu=\left\{(x_1,x_2)|\arccos \left(\frac{x_1}{\sqrt{x_1^2+x_2^2}}\right)\in (0,\mu\pi),\quad x_2>0\right\}$.
	\item[$\bullet$] {\bf simple polytope}: A polytope $P\subset\mathbb R^n$ is  simple if and only if the tangent cone $V$ at the vertices of $P$ is equivalent to $(\mathbb R_+)^n$ after an affine transformation.
	\item[$\bullet$] {\bf simplicial polytope}: A polytope $P\subset\mathbb R^n$ is  simplicial if and only if all its $(n-1)$-faces are simplex in $\mathbb R^{n-1}$.
	\end{itemize}
\begin{definition}
[$\mathbf{\Theta(u,x_0,\Omega),\theta(u,x_0,\Omega)}$] Let $\Omega$ be a convex $n-$polytope and $x_0\in \Gamma_{n-2}(\Omega)$. Suppose $u\in C^2(\overline{\Omega})$ is a uniformly convex function. Consider an affine transformation $T$ such that $v(x)=u(Tx)$ satisfies $D^2 v(T^{-1}x_0)=I_{n\times n}$. Let $V$ be the tangent cone of $T^{-1}(\Omega)$ at $T^{-1}(x_0)$. Then $\mathbf{\Theta(u,x_0,\Omega)(\theta(u,x_0,\Omega))}$ is defined as the maximum(minimum) dihedral angle between the adjacent $(n-1)-$faces of $V$.
\end{definition}

\begin{definition}[{\bf A-Condition}]
Let $u\in C^2(\overline{\Omega})$ be a uniformly convex function for some convex $n-$polytope $\Omega$.
For $x_0\in  \Gamma_{n-2}(\Omega)$, we say that $u$ satisfies the (Strong) A-condition at $x_0$ if and only if $\Theta(u,x_0,\Omega)(<) \leq \frac{\pi}{2}$.
\end{definition}

\begin{remark}
A-condition imposes strong restriction on the polytopes. It is pointed out that the polytope $\Omega$ must be a simple polytope in Proposition \ref{prop5.1:label}.
\end{remark}

\begin{theorem}\label{mainthm1}
Let $\Omega$  be a bounded convex $n-$polytope. Let $u$ be a convex function which solves \eqref{intro-1}.  Assume that for some $\beta\in(0,1)$,
\begin{equation}\label{cond-f-varphi}
f\in C^\beta(\overline{\Omega}),\quad f>0,\quad \varphi\in C^{2,\beta}(\overline \Omega),
\end{equation}
and there exists a globally $C^2$, convex, sub-solution $\underline u\in C^2(\overline{\Omega})$ to \eqref{intro-1}(that is $\det D^2\underline u\ge f$ in $\overline{\Omega}$ and $\underline u=\varphi$ on $\partial \Omega$). Then, $u\in C^{1,1}(\overline \Omega)\bigcap C^{2}(\overline\Omega\backslash \Gamma_{n-3}(\Omega))$. If in addition, the boundary data $\varphi$ satisfies the following two conditions.
\begin{itemize}
	\item[(C1).]  $\det D^2 \varphi=f$  on $\Gamma_{n-3}(\Omega)$.
	\item[(C2).]  $\varphi$ satisfies the $A-$condition on $\Gamma_{n-3}(\Omega)$.
\end{itemize}
Then $u\in C^2(\overline{\Omega})$.
\end{theorem}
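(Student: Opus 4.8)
The plan is to localise along $\partial\Omega$ and treat separately the relative interiors of the $(n-1)$-faces, the relative interiors of the $(n-2)$-faces, and the remaining lower skeleton $\Gamma_{n-3}(\Omega)$. Since $f>0$ is continuous on the compact set $\overline\Omega$ we have $0<c_0\le f\le C_0$, so Caffarelli's interior theory gives strict convexity and $u\in C^{2,\beta}_{\mathrm{loc}}(\Omega)$, while comparison with the subsolution yields $\underline u\le u$ in $\Omega$ (equal boundary data and $\det D^2\underline u\ge f=\det D^2u$); this, together with the convexity of $\Omega$ and barriers built from $\varphi\in C^{2,\beta}$, gives the $C^0$ and $C^1$ estimates. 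On the relative interior of an $(n-1)$-face the boundary is flat, so the second derivatives of $u$ tangent to that face equal those of $\varphi$; the mixed normal-tangential and normal-normal ones are then estimated by the Caffarelli--Nirenberg--Spruck barrier argument, with the barrier built from $\underline u$ as in Guan--Spruck and Guan, and Savin's boundary Schauder estimate upgrades this to $u\in C^{2,\beta}$ up to the relative interior of every $(n-1)$-face.

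To complete the first assertion I need $\|D^2u\|_{L^\infty}$ near $\Gamma_{n-2}(\Omega)$. The basic mechanism is that, near a boundary point $x_0$ lying on a face of codimension $k\ge 2$, the polytope is locally the intersection of $k$ facets, and $u=\varphi$ on each of them forces the block of $D^2u$ along the tangent space of that facet to agree with the corresponding block of $D^2\varphi$. When $k\ge 3$ these blocks already overlap enough to pin every entry of $D^2u$ on the face; when $k=2$ they pin everything except the single off-diagonal entry conjugate to the transverse $2$-plane, and that one is controlled through the algebraic relation $\det D^2u=f$. Feeding this into the second-derivative estimates, with $\underline u$ again supplying the barriers, yields the uniform bound $\|D^2u\|_{L^\infty(\Omega)}\le C$, hence $u\in C^{1,1}(\overline\Omega)$.

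The main new step is $C^2$ up to the relative interior of an $(n-2)$-face. After an affine normalisation the domain near such a point $x_0$ is modelled on $V_\mu\times\mathbb R^{n-2}$ with $\mu\in(0,1)$, with the Hessian of $u$ in the $\mathbb R^{n-2}$-directions pinned to $D^2\varphi$; since exactly one Hessian entry is then free, the candidate limiting Hessian at $x_0$ is fixed purely algebraically by the two facet data and $\det D^2u(x_0)=f(x_0)$, with \emph{no} compatibility condition --- this is why no hypothesis is needed here. To promote this to continuity of $D^2u$ up to the edge I would apply a partial Legendre transform in one transverse variable, turning the $\mathbb R^{n-2}$-directions into spectator parameters and reducing the problem, fibrewise over $\mathbb R^{n-2}$, to a two-dimensional Monge--Amp\`ere problem in the wedge $V_\mu$ with quadratic data on the two rays; the latter is handled by explicit barriers in $V_\mu$, keeping track of the dependence on $\mu$. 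I expect this to be the main obstacle: the complex-analytic argument and the zero-order-free equation for $u_{12}$ used in \cite{LeSavin2023} are unavailable for $n\ge 3$, so both the two-dimensional estimate and the verification that the transform genuinely decouples the $\mathbb R^{n-2}$-directions must be carried out by real-variable methods.

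Finally fix $x_0\in\Gamma_{n-3}(\Omega)$ and normalise so that $D^2\varphi(x_0)=I$, with tangent cone $V$. By the Remark, (C2) forces $\Omega$ to be simple, so $V\cong(\mathbb R_+)^k\times\mathbb R^{n-k}$ with $k\ge 3$; since $k\ge 3$, the pinned blocks of the $k$ facets now determine \emph{all} of $D^2u(x_0)$, namely $D^2u(x_0)=I$, and (C1) --- $\det D^2\varphi=f$ on $\Gamma_{n-3}$ --- is precisely the resulting necessary compatibility $\det D^2u(x_0)=f(x_0)=1$. Writing $g=u-\tfrac12|x-x_0|^2$, the function $g$ solves a uniformly elliptic equation $a^{ij}g_{ij}=f-1$ with $a^{ij}(x_0)=\delta^{ij}$ and $|f-1|=O(|x-x_0|^\beta)$, and $g$ vanishes to order $>2$ on $\partial V$; hence the behaviour of $g$ near $x_0$ is controlled by the homogeneous solutions of the model linear problem on the cone $V$ vanishing on $\partial V$, whose degree $\gamma$ is governed by the first Dirichlet eigenvalue $\lambda_1$ of $V\cap\mathbb S^{n-1}$ through $\gamma(\gamma+n-2)=\lambda_1$. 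The role of the $A$-condition is exactly to guarantee $\gamma>2$, i.e.\ $\lambda_1>2n$: extending/reflecting across the facets of $V$ preserves ellipticity and the sub-/super-solution structure precisely when all dihedral angles are at most $\pi/2$, reducing the eigenvalue problem to one on a convex region --- in the orthogonal case $V=(\mathbb R_+)^k\times\mathbb R^{n-k}$ the explicit harmonic $x_1\cdots x_k$ has degree $k\ge 3$ --- which forces $g=o(|x-x_0|^2)$ and therefore $D^2u\to I$ at $x_0$. Covering $\Gamma_{n-3}(\Omega)$ by such points and combining with the two previous cases gives $u\in C^2(\overline\Omega)$; establishing the eigenvalue bound exactly at the threshold $\Theta\le\pi/2$, and that the reflection/extension is legitimate there, is the subtlest point of this last step.
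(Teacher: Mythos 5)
Your decomposition of the problem (facets, codim-$2$ faces, $\Gamma_{n-3}$) and your identification of the spectral threshold $\lambda_1>2n$ as the role of the $A$-condition are both correct and match the paper's Remark~\ref{emk-liouville1}. However, the two steps you flag as the core of the argument are not in fact the route the paper takes, and as stated they would not go through.

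For the codim-$2$ faces, the partial Legendre transform in one transverse variable does \emph{not} turn the $\mathbb R^{n-2}$ directions into spectators. Transforming in $x_n$ replaces $D^2u$ by the Schur-complement-type expressions $v_{ij}=-u_{ij}+u_{in}u_{jn}/u_{nn}$ ($i,j<n$), $v_{p_n i}=-u_{ni}/u_{nn}$, $v_{p_np_n}=1/u_{nn}$, and the resulting PDE for $v$ still couples all $n$ variables; there is no fibrewise $2$-D Monge--Amp\`ere problem to which one can apply explicit $V_\mu$-barriers. The mechanism the paper uses here is a blow-up argument: one rescales $u$ around a point of the edge, shows the rescaled functions converge to a solution of $\det D^2v=f(0)$ in $(\mathbb R_+)^2\times\mathbb R^{n-2}$ with quadratic boundary data and a subsolution bound, and then invokes the nonlinear Liouville theorems of Section~3 (Theorem~\ref{thm-liou2}, Lemma~\ref{lemma-liou2}) to conclude that the limit is the quadratic polynomial determined by the ``big root'' of the scalar equation for the free off-diagonal entry (Lemma~\ref{cd2-pC2-1}, Theorem~\ref{C2-cd2}). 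That Liouville theorem is a genuinely nonlinear statement and is the substitute for the complex-analytic argument of Le--Savin; this is the key idea missing from your proposal.

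For $\Gamma_{n-3}$ your linearisation ``$a^{ij}g_{ij}=f-1$ with $a^{ij}(x_0)=\delta^{ij}$'' is circular: $a^{ij}$ depends on $D^2u$, and at this stage you know only that $D^2u$ is bounded, not that it is continuous at $x_0$, still less equal to $I$ there. Without continuity of $a^{ij}$ you cannot compare $g$ to homogeneous solutions of the constant-coefficient model problem on the cone, so the eigenvalue bound does not transfer to an estimate on $g$. The paper again avoids this by a blow-up argument (Theorem~\ref{thm-c2-cdk}), reducing at each scale to the model cone problem $\det D^2v=1$ with boundary data $\tfrac12|x|^2$ and the subsolution bound $v\ge \tfrac12|x|^2$, and then applying Theorem~\ref{thm-liou1}. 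The proof of Theorem~\ref{thm-liou1} is exactly where the eigenfunction enters, but \emph{nonlinearly}: one shows $\tfrac12|x|^2+\varepsilon|x|^{\mu_0}\phi_{\Sigma_0}$ is a supersolution of $\det D^2u=1$ by exploiting concavity of $\log\det$, and slides $\varepsilon\to0$. This is how the threshold $\mu>2$ (i.e.\ $\lambda_1>2n$) is used; it is not a linear Schauder-type comparison.

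Finally, your sketch of the $C^{1,1}$ estimate (``pinned blocks $+$ the algebraic relation $\det D^2u=f$'') only controls $D^2u$ \emph{on} the faces, not in a neighbourhood; the paper's Lemma~\ref{lem-C11} obtains the interior-uniform bound by yet another blow-up/compactness argument, using strict convexity of the rescaled limits (ruled out lines/rays via quadratic growth of $\underline u_\infty$). So the three pillars of your plan each require the Liouville-in-cones machinery that the proposal omits.
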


\begin{remark}
Condition (C1) is a necessary compatibility condition. Condition (C2) is a necessary condition in the following sense.
If (C2) is violated, then \eqref{intro-1} does not admit any solution $u\in C^2(\overline{\Omega})$ provided that $\det D^2 \underline u\not\equiv f$ in $\Omega$(See Lemma \ref{cd2-pC2-1} and Corollary \ref{cor-notsimple-3d}).
\end{remark}

Theorem \ref{mainthm1} only proved that $u\in C^2(\overline{\Omega})$. However, $D^2u$ may have oscillation of order $1$ near $\Gamma_{n-2}(\Omega)$ even all the quantities in Theorem \ref{mainthm1} are uniformly bounded(See Remark 5.2 in \cite{LeSavin2023} for $n=2$).
In order to control the modulus of the continuity of $D^2u$, we have the following theorem.
\begin{theorem}\label{mainthm2}
Suppose all the conditions in Theorem \ref{mainthm1} are fulfilled and the (C2) condition is replaced by the following one.
\begin{itemize}
	\item[(C3).]  $\varphi$ satisfies the strong $A-$condition on $\Gamma_{n-3}(\Omega)$.
\end{itemize}
Moreover, if one of the following two conditions is satisfied.
\begin{itemize}
		\item[(C4).] $\det D^2{\underline u}(x)>f(x)$ on $\Gamma_{n-2}(\Omega)\backslash \Gamma_{n-3}(\Omega)$.
		\item[(C5).] $\underline{u}$ satisfies the strong $A-$condition on $\Gamma_{n-2}(\Omega)\backslash \Gamma_{n-3}(\Omega)$.
	\end{itemize}
 Then, $u\in C^{2,\alpha}(\overline \Omega)$ for some $\alpha\in (0,\beta)$. Moreover, the $C^{2,\alpha}-$norm of $u$ and $\alpha$ depend only on $\Omega$, $\beta$, $\|1/f\|_{L^\infty(\Omega)}$, $\|f\|_{C^{\beta}(\overline{\Omega})}$, $\|\varphi\|_{C^{2,\beta}(\partial \Omega)}$, $\|\underline{u}\|_{C^2(\overline{\Omega})}$ and $\varepsilon_0>0$ where
 \begin{equation*}
 \varepsilon_0=\min_{x\in \Gamma_{n-2}(\Omega)}\left(\frac \pi 2-\Theta(\underline u,x,\Omega)\right)
 \end{equation*}
 or
 \begin{equation*}
\begin{split}
  \varepsilon_0=\min \bigg\{ &
  \min_{x\in \Gamma_{n-3}(\Omega)}\left(\frac \pi 2-\Theta(\underline u,x,\Omega)\right), \\
  & \inf_{x\in \Gamma_{n-2}(\Omega)\backslash \Gamma_{n-3}(\Omega)}\left( \max\{\det D^2{\underline u}(x)-f(x),\frac \pi 2-\Theta(\underline u,x,\Omega)\} \right)\bigg\}.
\end{split}
\end{equation*}
\end{theorem}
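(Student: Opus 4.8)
The starting point is Theorem~\ref{mainthm1}: since the strong $A$-condition (C3) implies (C2) and the compatibility (C1) is assumed, Theorem~\ref{mainthm1} already gives $u\in C^{1,1}(\overline\Omega)\cap C^{2}(\overline\Omega)$. From $\det D^2u=f$ with $c_0\le f\le C_0$ and $\|D^2u\|_{L^\infty}\le C$ the eigenvalues of $D^2u$ are pinched in some $[\lambda,\Lambda]$, so $u$ is uniformly convex and the equation is uniformly elliptic along $u$. The plan is to upgrade $u\in C^2$ to $u\in C^{2,\alpha}$ by proving a pointwise $C^{2,\alpha}$ estimate with a \emph{uniform} constant at every $x_0\in\overline\Omega$; the only delicate points are those near the skeleton $\Gamma_{n-2}(\Omega)$. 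Away from it there is nothing new: if $\mathrm{dist}(x_0,\Gamma_{n-2}(\Omega))=\rho>0$, then Caffarelli's interior $C^{2,\alpha}$ estimate and the boundary $C^{2,\alpha}$ estimates of Savin and Trudinger--Wang at points of the (flat, hence $C^{2,\beta}$) relative interiors of the $(n-1)$-faces give $[D^2u]_{C^{\alpha}(B_{\rho/2}(x_0)\cap\overline\Omega)}\le C$ with $C$ independent of $x_0$, the strict convexity and section bounds needed for Savin's theorem being consequences of $u\in C^{1,1}$ and $f\ge c_0$. So the whole content is a uniform $C^{2,\alpha}$ bound in a neighbourhood of $\Gamma_{n-2}(\Omega)$, which I would obtain stratum by stratum.

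Fix $x_0$ in the relative interior of a $k$-face, $0\le k\le n-2$. Because $\Omega$ is simple (Proposition~\ref{prop5.1:label}), after an affine normalisation $T_{x_0}$ with $D^2(u\circ T_{x_0})(0)=I$ and $x_0\mapsto 0$, the tangent cone of $\Omega$ at $x_0$ becomes $V_{x_0}=A_{x_0}\big((\mathbb R_+)^{n-k}\times\mathbb R^{k}\big)$, whose relevant dihedral angles are exactly the quantities controlled by the $A$-condition. Subtract the $2$-jet $P_{x_0}$ of $u\circ T_{x_0}$ at $0$: the remainder $v:=u\circ T_{x_0}-P_{x_0}$ vanishes to second order at $0$, equals $\varphi\circ T_{x_0}-P_{x_0}=O(|x|^{2+\beta})$ on every face of $V_{x_0}$ (the tangential $2$-jet of $u$ on each facet being that of $\varphi$, with $\varphi\in C^{2,\beta}$), and, linearising $\det D^2u=f$ around $P_{x_0}$ and using $\det D^2P_{x_0}=f(x_0)$, satisfies $\mathcal L v=f\circ T_{x_0}-f(x_0)+Q(D^2v)$, where $\mathcal L$ is uniformly elliptic with continuous coefficients equal to $\delta^{ij}$ at $0$ and $Q$ is quadratic in $D^2v$. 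I would then run a Campanato/dyadic iteration: at scale $r_j=2^{-j}$ compare $v$ with the homogeneous solution of the frozen problem $\Delta w=0$ in the model cone $V_{x_0}$ with zero data on $\partial V_{x_0}$. The key fact is that the $A$-condition forces every such homogeneous solution to have degree $\ge 2$, with degree exactly $2$ attainable only in the right-angle case — the two-facet model being $x_1x_2$, which is harmonic and vanishes on $\{x_1=0\}\cup\{x_2=0\}$ precisely when the opening is $\pi/2$. Hence under the strong $A$-condition there is a uniform spectral gap $\delta_0=\delta_0(\varepsilon_0,n)>0$ so that all homogeneous solutions have degree $\ge 2+\delta_0$; the oscillation of the right-hand side ($O(r^{\beta})$), the oscillation of the coefficients of $\mathcal L$ (bounded by the modulus of continuity of $D^2u$ from Theorem~\ref{mainthm1}), and the quadratic error $Q$ are all absorbed by this gap, and the iteration closes to give $v\in C^{2,\alpha}$ at $x_0$ with $\alpha=\min(\beta,\delta_0)$ and a uniform bound. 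This handles $\Gamma_{n-3}(\Omega)$ — where (C3) supplies the gap, and, by continuity of $D^2\varphi$, also a neighbourhood of $\Gamma_{n-3}(\Omega)$ inside the $(n-2)$-faces — and the part of $\Gamma_{n-2}(\Omega)\setminus\Gamma_{n-3}(\Omega)$ on which (C5) holds.

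The remaining and hardest case is $x_0$ in the relative interior of an $(n-2)$-face where only (C4) holds: $\det D^2\underline u(x_0)>f(x_0)$, but the opening angle may be exactly $\pi/2$, so the spectral gap above degenerates. Here I would exploit the strict subsolution. The quantitative gap $\det D^2\underline u-f\ge\varepsilon_0>0$ lets one insert, between $\underline u$ and $u$ near $x_0$, barriers that are smooth $C^{2,\alpha}$ solutions of a Monge--Amp\`ere equation on a \emph{half-space} (the $(n-2)$-face direction being frozen as a fixed nondegenerate quadratic, so the problem becomes effectively two-dimensional transverse to the face, and then one-dimensional transverse to the remaining edge is resolved by the equation itself); these barriers force the local section geometry of $u$ at $x_0$ to be that of a smooth boundary point, after which Savin-type boundary $C^{2,\alpha}$ estimates apply, with the loss measured by $\varepsilon_0$ exactly as in the second displayed formula. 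Combining the interior estimate, the stratified estimates near $\Gamma_{n-3}(\Omega)$ and near the strong-$A$ part of $\Gamma_{n-2}(\Omega)\setminus\Gamma_{n-3}(\Omega)$, and this subsolution argument, and covering $\overline\Omega$ by finitely many such neighbourhoods, yields $u\in C^{2,\alpha}(\overline\Omega)$ with $\alpha=\min(\beta,\delta_0)$ and the stated dependence.

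I expect the main obstacle to be this last step together with showing that the spectral gap $\delta_0$, hence $\alpha$, depends only on $n$ and $\varepsilon_0$ — i.e.\ making the barrier construction in the (C4) case genuinely quantitative and glued correctly to the two estimates on either side. A secondary technical difficulty is that the linearised operator $\mathcal L$ has coefficients that are a priori only continuous (not H\"older), so the dyadic iteration must be arranged so that the coefficient oscillation is consistently swallowed by the gap $\delta_0$ rather than competing with it; this is the step that must not be allowed to produce only a logarithmic (i.e.\ $C^2$, not $C^{2,\alpha}$) improvement, which is precisely what happens, and is forced to happen by Remark~5.2 of \cite{LeSavin2023}, when the angle equals $\pi/2$ and (C4)--(C5) fail.
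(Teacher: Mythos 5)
Your overall framework---obtain a pointwise $C^{2,\alpha}$ decay at each stratum from a spectral gap for the tangent cone, then upgrade to a genuine H\"older estimate by rescaling and Schauder---is in the same spirit as the paper's. The paper achieves the decay by a one-shot barrier (Theorem~\ref{thm-2+growth}) built from the first Dirichlet eigenfunction on a slightly enlarged spherical domain, rather than by a dyadic Campanato iteration; these are morally the same mechanism and both hinge on the eigenvalue gap $\lambda_1(\Sigma)>2n$ that Remark~\ref{emk-liouville1} provides under the (strong) $A$-condition. The barrier route has the advantage that it avoids solving a frozen problem at each scale and makes the dependence of $\alpha$ on the gap completely transparent.

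The third paragraph rests on a misconception. Under (C4) there is \emph{no} residual case in which the opening angle of the tangent cone, measured with respect to $D^2u$, is exactly $\pi/2$. Lemma~\ref{cd2-pC2-1} (with Remarks~\ref{rem-face-2-a}--\ref{rem-face-2-b}) determines the full Hessian of $u$ at a point $x_0\in\Gamma_{n-2}(\Omega)\setminus\Gamma_{n-3}(\Omega)$ with $u\ne\underline u$: after the affine normalization in which $\underline u_{ij}(x_0)=\delta_{ij}$ for $i+j\ne 3$ and $\underline u_{12}(x_0)=b$, one has $u_{12}(x_0)=\sqrt{1-f(x_0)}$, the big root. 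Since $\det D^2\underline u(x_0)-f(x_0)=(1-b^2)-f(x_0)\ge\varepsilon_0$, it follows that $u_{12}(x_0)\ge\sqrt{\varepsilon_0}>0$, hence
\begin{equation*}
\Theta(u,x_0,\Omega)=\arccos\bigl(u_{12}(x_0)\bigr)\le\arccos\sqrt{\varepsilon_0}<\frac{\pi}{2},
\end{equation*}
with the gap controlled quantitatively by $\varepsilon_0$. Thus (C4), exactly like (C5), produces the spectral gap for the cone associated with $D^2u(x_0)$, and the same Theorem~\ref{thm-2+growth}/Campanato machinery applies directly (this is what Proposition~\ref{ modulusofcontinuity1} is doing). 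Your proposed ``insert a smooth half-space barrier and then invoke Savin'' step is both unnecessary and, as written, not sound: the boundary near $x_0$ remains a genuine wedge, so Savin's pointwise boundary estimate at smooth boundary points does not become applicable.

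A secondary omission: you do not explain how to pass from a pointwise decay at a point of $\Gamma_{n-k}(\Omega)\setminus\Gamma_{n-k-1}(\Omega)$ with $k\ge 3$ to a genuine $C^{2,\alpha}$ bound in its neighbourhood, because after rescaling, the annular comparison region $[0,4]^k\setminus[0,1]^k$ still contains lower-dimensional strata. The paper covers that annulus by a collar $Q_\delta$ of the edges (where the Step~1 estimates near $\Gamma_{n-2}$ apply) and its complement (interior/boundary estimates), then glues the small $L^\infty$ bound to the bounded $C^{2,\alpha}$ bound via the interpolation Lemma~\ref{lemma-interpolation}; this is exactly what forces the H\"older exponent to degrade from $\alpha$ to $\alpha^3/(2(1+\alpha)^2)$ at each codimension step. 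Some such gluing/interpolation is needed for the induction on codimension to close; ``covering by finitely many neighbourhoods'' alone does not supply it.
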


Conditions (C4) or (C5) are used to control the modulus of the continuity of $D^2u$ near $\Gamma_{n-2}(\Omega)\backslash \Gamma_{n-3}(\Omega)$(See Proposition \ref{ modulusofcontinuity1} and Proposition \ref{ modulusofcontinuity2}). Condition (C3) is used to control the modulus of the continuity of $D^2u$ near $\Gamma_{n-3}(\Omega)$(See Lemma \ref{exm-sta:label} for the sharpness of condition (C3)).

 The existence of a $C^2$ sub-solution $\underline u$ is crucial in Theorem \ref{mainthm1} and Theorem \ref{mainthm2}. However, in general, it is rather difficult to construct a $C^2$ sub-solution $\underline u$ for given data $f,\varphi$.  When $\Omega$ is smooth and uniformly convex, then any  $\varphi\in C^\infty(\partial\Omega)$ can be extended to a uniformly convex function $\varphi\in C^\infty(\overline{\Omega})$ with $\det D^2 \varphi\ge f$.  When $\Omega$ is just a convex polytope, it is  non-trivial to extend a boundary uniformly convex function $\varphi\in C^\infty(\partial\Omega)$ to a uniformly convex function $\varphi\in C^2(\overline{\Omega})$.

 The present paper is organized as follows. In Section 2, we establish the global $C^{1,1}$-estimates which depends only on the existence of $C^2$ sub-solution $\underline u$. In Section 3, we mainly discuss various kinds of Liouville type theorems for Monge-Ampere equation in the cone. In Section 4-5, Theorem \ref{mainthm1} is proved by blow up argument which relies on the Liouville type theorems in Section 3. Also, we prove the existence of $C^2(\overline\Omega)$ function $u$ satisfying A-condition implies $\Omega$ is simple. The converse is also true for $n=2,3$(Proposition \ref{prop5.1:label}). In Section 6, Theorem \ref{mainthm2} is proved by rescaling arguments. Moreover, we provide  sufficient conditions on the existence of $C^{2,\alpha}(\overline \Omega)$ solutions without sub-solution conditions in $n=2,3$(Theorem \ref{thm6.3}).

\section{Global $C^{1,1}$ estimate}
As pointed out in the introduction, the global $C^{1,1}$ estimate of the solutions for \eqref{intro-1} was important for global $C^{2,\alpha}$ estimate in smooth domains $\Omega$. In this section, we prove that the global $C^{1,1}$ estimate of the solutions for \eqref{intro-1} holds even for polytope domains $\Omega$ provided   that   there exist globally $C^2$ convex sub-solutions.
\begin{lemma}\label{lem-C1}
Let $\Omega$  be a bounded convex $n-$polytope and $f,\varphi$ satisfy \eqref{cond-f-varphi}. Suppose  that there exists a globally $C^2$, convex, sub-solution $\underline u\in C^2(\overline{\Omega})$ to \eqref{intro-1}. Then,
\eqref{intro-1} admits a unique Alexandrov solution $u\in C^{0,1}(\overline{\Omega})\cap C^{2,\beta}(\overline{\Omega}\backslash \Gamma_{n-2}(\Omega))$.
\end{lemma}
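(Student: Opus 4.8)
The plan is to construct the Alexandrov solution by Perron's method and then upgrade its regularity using the sub-solution barrier together with interior and boundary estimates from the standard theory of the Monge-Ampère equation. First I would set up the Perron family $\mathcal{S}$ of convex functions $v$ on $\overline{\Omega}$ satisfying $v \le \varphi$ on $\partial\Omega$ and $\det D^2 v \ge f$ in the Alexandrov sense, noting that the given sub-solution $\underline{u}$ lies in $\mathcal{S}$ so the family is nonempty. The convexity of $\Omega$ lets one build, for each boundary point and each direction, a linear upper barrier agreeing with $\varphi$ there (here $\varphi \in C^{2,\beta}(\overline{\Omega})$ is more than enough), so $\mathcal{S}$ is bounded above and $u := \sup_{v\in\mathcal{S}} v$ is well defined and convex. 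Standard arguments (Rauch–Taylor / Alexandrov) show $u$ is an Alexandrov solution of $\det D^2 u = f$ with $u = \varphi$ on $\partial\Omega$; uniqueness follows from the Alexandrov maximum principle/comparison for Alexandrov solutions, which applies because $f>0$ is bounded below. The barrier $\underline u \le u \le \ell$ for affine $\ell$ at each boundary point gives the global Lipschitz bound: $\underline u \in C^2(\overline\Omega)$ and the affine upper barriers together pin down $u$ on $\partial\Omega$ with a two-sided linear modulus, hence $u \in C^{0,1}(\overline\Omega)$.

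Next I would establish interior regularity. Since $0 < \min_{\overline\Omega} f \le f \le \max_{\overline\Omega} f < \infty$ and $f \in C^\beta$, Caffarelli's regularity theory gives $u \in C^{2,\beta}_{loc}(\Omega)$ (first strict convexity and $C^{1,\alpha}$ interior regularity from the bounds on $f$, then $W^{2,p}$ and Schauder once $f\in C^\beta$). This handles the interior; the content of the lemma is that this extends up to $\overline\Omega \setminus \Gamma_{n-2}(\Omega)$, i.e. up to the relative interiors of the $(n-1)$-faces.

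For a point $x_0$ in the relative interior of an $(n-1)$-face $F$, the plan is to localize: near $x_0$, $\partial\Omega$ coincides with a piece of the hyperplane containing $F$, so $\Omega$ looks locally like a half-space with smooth ($C^{2,\beta}$, in fact flat) boundary portion and $C^{2,\beta}$ boundary data. This is exactly the setting of Savin's boundary localization and pointwise $C^{2,\alpha}$ estimates (or Trudinger–Wang / Caffarelli boundary estimates) for the Monge-Ampère equation with a $C^2$ convex sub-solution: the sub-solution $\underline u$ provides the barrier needed to guarantee that sections of $u$ at $x_0$ have good geometry (they are balanced), and the flatness of the boundary piece plus $f \in C^\beta$, $\varphi \in C^{2,\beta}$ then yield $u \in C^{2,\beta}$ in a neighborhood of $x_0$ relative to $\overline\Omega$. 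Covering $\overline\Omega \setminus \Gamma_{n-2}(\Omega)$ by the interior together with such face-neighborhoods gives $u \in C^{2,\beta}(\overline\Omega \setminus \Gamma_{n-2}(\Omega))$.

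The main obstacle is the boundary $C^{2,\beta}$ estimate on the faces: one must check that the hypotheses of Savin's (or Trudinger–Wang's) boundary regularity theorem are genuinely met on a relatively open piece of an $(n-1)$-face, in particular that the sub-solution $\underline u$ can be used to obtain the required non-degeneracy of the boundary sections (the "obliqueness"/engulfing properties of sections centered at $x_0$) uniformly for $x_0$ in a compact subset of the relative interior of $F$, and that no hypothesis secretly uses smoothness of $\partial\Omega$ across the lower-dimensional skeleton. Everything else — Perron existence, uniqueness via comparison, and the global Lipschitz bound — is routine given the $C^2$ convex sub-solution and the convexity of $\Omega$.
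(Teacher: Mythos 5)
Your proposal matches the paper's route in its broad outline---Perron/standard existence, sub-solution plus affine upper barriers for the Lipschitz bound, and Savin's pointwise boundary $C^{2,\alpha}$ theory for the relative interiors of the $(n-1)$-faces---but it has a genuine gap at the step where you claim interior $C^{2,\beta}$ regularity.

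You write that ``Caffarelli's regularity theory gives $u \in C^{2,\beta}_{loc}(\Omega)$ (first strict convexity and $C^{1,\alpha}$ interior regularity from the bounds on $f$, \dots)''. Strict convexity does \emph{not} follow from two-sided bounds on $f$ alone. For a domain that is not strictly convex (in particular any polytope, which has flat $(n-1)$-faces), Caffarelli's localization theorem only says that if $u$ fails to be strictly convex at an interior point, then the contact set of its supporting plane contains a segment whose endpoints lie on $\partial\Omega$; and such a segment \emph{can} exist if, e.g., the boundary data restricted to a flat face happens to be affine along some line. Ruling this out requires precisely the $C^2$ convex sub-solution, and the paper devotes a separate ``Claim'' to it. The argument is: the endpoints $p,q$ cannot lie in $\Gamma_{n-1}\setminus\Gamma_{n-2}$ because, thanks to the sub-solution (which forces $\varphi=\underline u$ to be uniformly convex on each face, giving the quadratic separation Savin's theorem needs), $u$ is $C^{2,\beta}$ and hence uniformly convex near those points; and if $p,q\in\Gamma_{n-2}$, then the $C^2$ sub-solution yields a quadratic upper bound $u(x)=O(|x-p|^2)$ near $p$ (via convexity of $u$ and $u=\underline u$ on $\partial\Omega$), while the uniform convexity of $\underline u$ along $pq$ forces $\nabla\underline u(p)\cdot(q-p)<0$, and the two are incompatible. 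Your plan currently asserts the conclusion of this claim without the mechanism that actually delivers it.

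Relatedly, your proposal notes in passing that the sub-solution ``provides the barrier needed to guarantee that sections of $u$ at $x_0$ have good geometry,'' but it would help to make explicit that the specific hypothesis you are verifying is Savin's quadratic separation of the boundary data from its tangent plane on the face: this follows because $\underline u = \varphi$ on $\partial\Omega$, $\underline u\in C^2$, and $\det D^2\underline u \ge f > 0$ makes $\underline u$ uniformly convex, hence so is $\varphi$ restricted to any $(n-1)$-face. This is the same sub-solution input that enters the strict convexity claim, so once you repair the gap above, the two pieces fit together exactly as in the paper's proof.

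Minor point: the quadratic upper bound $u(x)=O(|x-x_0|^2)$ for $x_0\in\Gamma_{n-2}(\Omega)$ is also what the paper uses to conclude $u$ is differentiable on $\partial\Omega$ (not just Lipschitz), and this bound again uses convexity of $u$ together with $u=\underline u$ on $\partial\Omega$ near the corner---it is not just ``$\underline u \le u\le \ell$'' but a squeeze between boundary values via lines through interior points near $x_0$.
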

\begin{proof}
The existence of Alexandrov solution $u\in C(\overline{\Omega})$ follows from [Theorem  8.2.7,\cite{Han2016book}]. Also from [Theorem 1.1,\cite{Savin2013}], $u\in C^{2,\beta}$ near $\Gamma_{n-1}(\Omega)\backslash \Gamma_{n-2}(\Omega)$. For any point $x_0\in \Gamma_{n-2}(\Omega)$, by subtracting a linear function, for $x$ near $x_0$, one knows
\begin{equation*}
\underline{u}(x)=O(|x-x_0|^2).
\end{equation*}
Since $\underline{u}(x)$ is a sub-solution and $u$ is convex, one obtains
\begin{equation*}
u(x)=O(|x-x_0|^2),\text{ for } x \text{ near } x_0.
\end{equation*}
This implies $u$ is differentiable on $\partial\Omega$.
\par Claim: $u$ is strictly convex in $\Omega$.
\\ Suppose $u$ is not strictly convex at $0\in \Omega$. Without loss of generality, we may assume $u(0)=0$ and $u\ge 0$. By [Theorem 1, \cite{Caffarelli1990-1}], $\{u=0\}$ contains a line segment whose end points $p,q$ are on $\partial\Omega$.  By [Theorem 1.1,\cite{Savin2013}], one knows that $p,q$ are not on $\Gamma_{n-1}(\Omega)\backslash\Gamma_{n-2}(\Omega)$. Now we can assume $p,q\in \Gamma_{n-2}(\Omega)$.
Notice  that
\begin{equation*}
\underline{u}(p+t(q-p))-\underline u(p)-t\nabla \underline{u}(p)\cdot (q-p)>0,\quad t\in (0,1).
\end{equation*}
Hence, $\nabla \underline{u}(p)\cdot (q-p)<0$.  This implies
\begin{equation*}
O(t^2)\ge u(p+t(q-p))-u(p)-t\nabla {u}(p)\cdot (q-p)=\big(-\nabla \underline{u}(p)\cdot (q-p)\big)t
\end{equation*}
This yields a contradiction and proves the claim.
\par
Then, $u\in C^{2,\beta}(\overline{\Omega}\backslash \Gamma_{n-2}(\Omega))$ follows from the interior estimates(Theorem 2,\cite{Caffarelli1990-2}) and boundary estimates(Theorem 1.1,\cite{Savin2013}).
\end{proof}

\begin{lemma}\label{lem-C11}
Suppose all the assumptions in Lemma \ref{lem-C1} are fulfilled. Let $u$ be a convex function which solves \eqref{intro-1}. Then, we have
$u\in C^{1,1}(\overline{\Omega})$ where the $C^{1,1}$-norm of $u$ depends  only on $\Omega$, $\beta$, $\|1/f\|_{L^\infty(\Omega)}$, $\|f\|_{C^{\beta}(\overline{\Omega})}$, $\|\varphi\|_{C^{2,\beta}(\partial \Omega)}$, $\|\underline{u}\|_{C^{1,1}(\overline{\Omega})}$.
\end{lemma}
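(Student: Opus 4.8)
The plan is to reduce the statement to a uniform semiconcavity bound and then prove that bound by a barrier argument localized near $\Gamma_{n-2}(\Omega)$, combined with the interior and flat‑boundary regularity already available off $\Gamma_{n-2}(\Omega)$. First, since $u$ is convex and $\det D^2u=f$ with $\|1/f\|_{L^\infty}+\|f\|_{L^\infty}<\infty$, a bound $D^2u\le CI$ forces every eigenvalue of $D^2u$ to be at least $(\min_{\overline\Omega}f)\,C^{-(n-1)}$, so $cI\le D^2u\le CI$ and $u\in C^{1,1}(\overline\Omega)$ with the asserted dependence. Moreover $D^2u\le CI$ is equivalent to the uniform paraboloid bound
\[
u(x)\le \ell_{x_0}(x)+C|x-x_0|^2\qquad\text{for all }x\in\overline\Omega,
\]
valid at every $x_0\in\overline\Omega$ with the same $C$, where $\ell_{x_0}$ is a supporting affine function of $u$ at $x_0$. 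I would work with this formulation throughout, since it is meaningful even at the (a priori only Lipschitz) points of $\Gamma_{n-2}(\Omega)$.

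The core step is the paraboloid bound at $x_0\in\Gamma_{n-2}(\Omega)$. By the comparison principle, $\det D^2\underline u\ge f=\det D^2u$ together with $\underline u=u=\varphi$ on $\partial\Omega$ gives $\underline u\le u$ in $\Omega$, so any supporting plane $\ell_{x_0}$ of $\underline u$ at $x_0$ also supports $u$ at $x_0$. Subtracting $\ell_{x_0}$ and normalizing, we may assume $x_0=0$, $\ell_{x_0}\equiv 0$, hence $0\le\underline u\le u$, $u(0)=0$, and on $\partial\Omega$ one has $0\le u=\varphi\le C_0'|x|^2$ with $C_0'\lesssim\|\underline u\|_{C^{1,1}(\overline\Omega)}$ (as $0\le\underline u\le C_0'|x|^2$ by $C^{1,1}$ regularity); consequently $v:=u-\ell_{x_0}$ is convex, nonnegative, and $\|v\|_{L^\infty(\overline\Omega)}\le M_0:=C_0'(\operatorname{diam}\Omega)^2$. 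Treat first $x_0$ in the relative interior of an $(n-2)$-face: after an affine map, for a fixed $r>0$ depending only on $\Omega$, $\Omega\cap B_r=(V\times\R^{n-2})\cap B_r$ with $V\subset\R^2$ a convex wedge of opening $\gamma\in(0,\pi)$; writing $x=(y,z)$, $y\in\R^2$, $z\in\R^{n-2}$, and letting $e\in\R^2$ be the unit bisector of $V$, one has $y\cdot e\ge(\cos\tfrac\gamma2)|y|$ for all $y\in\overline V$. Set
\[
w(y,z)=\frac K2\,(y\cdot e)^2+\frac K2\,|z|^2 ,
\]
which is convex with $\det D^2w\equiv 0\le f$. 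Choosing $K$ a large constant of the form $c(\Omega)\bigl(\|\underline u\|_{C^{1,1}}+M_0r^{-2}\bigr)$, one checks $w\ge u$ on $\partial(\Omega\cap B_r)$: on the two facets through $0$ this uses $y\cdot e\ge(\cos\tfrac\gamma2)|y|$ and $u\le C_0'(|y|^2+|z|^2)$; on $\Omega\cap\partial B_r$ it uses $(y\cdot e)^2+|z|^2\ge(\cos\tfrac\gamma2)^2r^2$ and $u\le M_0$. The comparison principle on $\Omega\cap B_r$ then gives $u\le w$, i.e. $u(x)\le\frac K2|x|^2$ on $\Omega\cap B_r$, and since $v$ is convex, nonnegative and $\le M_0$ on $\overline\Omega$, the paraboloid bound holds on all of $\overline\Omega$ with a constant depending only on the listed quantities. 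For a general $x_0\in\Gamma_{n-2}(\Omega)$, in particular on $\Gamma_{n-3}(\Omega)$, pass to the limit along relative‑interior points $x_k$ of $(n-2)$-faces converging to $x_0$: $u$ is Lipschitz by Lemma \ref{lem-C1}, so the slopes of $\ell_{x_k}$ are bounded, a subsequence converges to a supporting plane of $u$ at $x_0$, and the paraboloid bound persists in the limit.

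Off $\Gamma_{n-2}(\Omega)$ the estimate is reduced to known theory. By Lemma \ref{lem-C1}, $u$ is strictly convex in $\Omega$ and $u\in C^{2,\beta}(\overline\Omega\setminus\Gamma_{n-2}(\Omega))$; the interior pointwise $C^{1,1}$–$C^{2,\alpha}$ estimates of Caffarelli \cite{Caffarelli1990-2} and, along the flat facets (where $\varphi\in C^{2,\beta}$), the boundary estimates of Savin \cite{Savin2013} (cf. Wang \cite{Wang1996}, Trudinger–Wang \cite{TW2008}) yield the paraboloid bound at such $x_0$; the only point to check is that the corresponding constants are uniform up to $\Gamma_{n-2}(\Omega)$, i.e. that the sections of $u$ stay comparable to balls. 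Near $\Gamma_{n-2}(\Omega)$ this is a consequence of the barrier $w$ above (rescaled, it shows the sections of $u$ centered near $\Gamma_{n-2}(\Omega)$ do not flatten below a fixed height), and on the remaining compact part of $\overline\Omega$ it follows from a standard compactness argument: were it to fail along a sequence of solutions with the listed quantities bounded, suitable rescalings would converge to a convex solution of a Monge–Amp\`ere equation with constant positive right‑hand side on $\R^n$ or a half‑space that is not strictly convex, contradicting the strict convexity (Lemma \ref{lem-C1}) of the limit. Combining the two regimes gives the uniform paraboloid bound on $\overline\Omega$, hence $u\in C^{1,1}(\overline\Omega)$ with norm bounded in terms of $\Omega,\beta,\|1/f\|_{L^\infty},\|f\|_{C^\beta},\|\varphi\|_{C^{2,\beta}},\|\underline u\|_{C^{1,1}}$.

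The main obstacle is the barrier step near $\Gamma_{n-2}(\Omega)$: one must exhibit a convex supersolution dominating $u$ on the boundary of a fixed‑size neighborhood of an edge point while vanishing to second order at that point, and the constraint $\det D^2w\le f$ rules out uniformly convex barriers, which forces the degenerate choice above; this works precisely because the tangent cone at an $(n-2)$-face splits as (a planar convex wedge of opening $<\pi$)$\,\times\,\R^{n-2}$, so that a single flat direction (the wedge bisector) can absorb the degeneracy. A secondary difficulty is keeping every constant dependent only on $\Omega,\beta,\|1/f\|_{L^\infty},\|f\|_{C^\beta},\|\varphi\|_{C^{2,\beta}},\|\underline u\|_{C^{1,1}}$ — and in particular not on a quantitative modulus of strict convexity of $u$ — which is why the barrier is taken explicit and the gluing argument off $\Gamma_{n-2}(\Omega)$ must be made quantitative.
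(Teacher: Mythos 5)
Your barrier step is a genuine alternative ingredient: the paper does not produce an explicit comparison function like $w=\tfrac K2(y\cdot e)^2+\tfrac K2|z|^2$, and the observation that a $C^{1,1}$ bound on $\underline u$ together with $\det D^2\underline u\ge \min f$ forces $D^2\underline u\ge c_1 I$ a.e.\ (so $\underline u$ is \emph{quantitatively} uniformly convex) is correct and gives you the two‑sided quadratic trap $c_1|x-x_0|^2\lesssim u-\ell_{x_0}\lesssim K|x-x_0|^2$ at every $x_0\in\Gamma_{n-2}(\Omega)\setminus\Gamma_{n-3}(\Omega)$, with constants depending only on the listed data. Up to this point the argument is sound, and it replaces two of the ingredients the paper uses (gradient bounds for $u_\lambda$ via convexity, and the quadratic growth of $\underline u_\lambda$).

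The gap is the propagation from these bounds at $\Gamma_{n-2}(\Omega)$ to interior points $y_0$ \emph{near} $\Gamma_{n-2}(\Omega)$. The paraboloid bound at $x_0$ gives $D^2u\le CI$ only in the viscosity sense \emph{at} $x_0$; it does not by itself give that the sections of $u$ at $y_0$, at scale $h\sim d(y_0,\Gamma_{n-2})^2$, are balanced, which is exactly what is needed to invoke the interior estimates with a uniform constant. You assert this ("the barrier $\ldots$ shows the sections of $u$ centered near $\Gamma_{n-2}(\Omega)$ do not flatten") but do not prove it, and the compactness argument you sketch is stated only "on the remaining compact part of $\overline\Omega$", with a limiting domain ($\R^n$ or a half‑space) that is wrong for the problematic regime where the bad points approach $\Gamma_{n-2}(\Omega)$: there the blow‑up lives on a wedge cone, not a half‑space, and its strict convexity is not a corollary of Lemma~\ref{lem-C1} but must be re‑established for the limit. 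The paper fills exactly this hole: it rescales at the scale $\lambda=|x_0|=d(x_0,\Gamma_{n-2}(\Omega))$, proves a separate compactness claim that the modulus of strict convexity of $u_\lambda$ on compacts away from the cone edge is uniform in $\lambda$ (ruling out a flat segment of the limit either going to infinity, via the quadratic growth of $\underline u_\infty$, or terminating on $\Gamma_{n-1}(V)$, via the boundary argument of Lemma~\ref{lem-C1}), and only then applies Caffarelli/Savin at distance one from the edge and unscales. Your two‑sided quadratic bound makes that compactness step shorter, but it must be carried out at the rescaled point $y_0/d$, not just at $x_0$. A secondary defect: for $x_0\in\Gamma_{n-3}(\Omega)$ your limiting argument degenerates, since the radius $r$ in which $\Omega$ coincides with the tangent cone at a nearby $(n-2)$-face point shrinks as that point approaches $\Gamma_{n-3}(\Omega)$, so $K\gtrsim M_0 r^{-2}$ blows up; the fix is to run the barrier directly at $x_0$ with $w=\tfrac K2(x'\cdot e)^2+\tfrac K2|z|^2$ for $e$ interior to the $k$-dimensional tangent cone, which also keeps $\det D^2 w=0$. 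The paper instead handles lower‑dimensional faces by an induction over $k$ with the same blow‑up scheme.
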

\begin{proof}
From Lemma \ref{lem-C1}, we only need to consider the points near $\Gamma_{n-2}(\Omega)$.
Let $\sigma_0\in (0,1)$ be a small positive constant.  Suppose $0\in \Gamma_{n-2}(\Omega)\backslash \Gamma_{n-3}(\Omega)$. Let $x_0\in \Omega$ be a point satisfying
\begin{equation}\label{c11-pt1}
d(x_0,0)=d(x_0,\Gamma_{n-2}(\Omega)),\quad \frac{d(x_0,\Gamma_{n-2}(\Omega))}{d(x_0,\Gamma_{n-3}(\Omega))}\le \sigma_0.
\end{equation}
After subtracting a linear function and performing an affine transformation, we may assume
\begin{equation*}
u(0)=|\nabla u(0)|=0,\quad V=(\mathbb R_+)^2\times \mathbb R^{n-2}
\end{equation*}
where $V$ is the tangent cone of $\Omega$ at $0$.
Define
\begin{equation*}
u_\lambda(x)=\frac{u(\lambda x)}{\lambda^2},\quad {\underline u}_\lambda(x)=\frac{{\underline u}(\lambda x)}{\lambda^2},\quad \Omega_\lambda=\lambda^{-1}\Omega,\quad  \lambda=|x_0|.
\end{equation*}
Then, by  convexity, $u_\lambda\rightarrow u_\infty$ locally uniformly in $V$ as $\lambda\rightarrow 0$.

By gradient estimates for convex functions, we know
\begin{equation*}
|D_{x_i}u_\lambda|\le C, \quad \text{in}\quad [0,4]^2\times [-4,4]^{n-2},\quad i=3,\cdots,n.
\end{equation*}
Also, from the convexity, for $x\in [0,4]^2\times [-4,4]^{n-2}$, we know
\begin{equation*}
C\ge \frac{\partial u_\lambda}{\partial x_1}(4,x_2,x'')\ge \frac{\partial u_\lambda}{\partial x_1}(x_1,x_2,x'')\ge \frac{\partial u_\lambda}{\partial x_1}(0,x_2,x'')\ge  \frac{\partial \underline{u}_\lambda}{\partial x_1}(0,x_2,x'')\ge -C.
\end{equation*}
Hence, we can get
\begin{equation*}
|D_x u_\lambda|\le C,\quad \forall x\in [0,4]^2\times [-4,4]^{n-2}.
\end{equation*}

Claim: For any $K\subset\subset \Omega_\lambda$, the strict convexity of $u_\lambda$ is independent of $\lambda$.\\
Suppose not. Consider the point $x_\infty$ and the supporting function $l_\infty(x)$ of $u_\infty$ at $x_\infty$ such that
$\{u_\infty=l_\infty\}$ is not a single point.
\begin{itemize}
	\item[1.] $\{u_\infty=l_\infty\}$ contains a line or a ray. However, along the line or the ray, $u_\infty\ge \underline u_\infty$ where $\underline u_\infty$ grows quadratically near infinity. This is impossible.
	\item[2.] $\{u_\infty=l_\infty\}$ contains a line segment and  its end points lie on $\Gamma_{n-1}(V)$. We can use the same argument as in Lemma \ref{lem-C1} to exclude this case.
\end{itemize}

Hence, by the interior $C^{2,\beta}$ regularity(Theorem 2,\cite{Caffarelli1990-2}) and the boundary $C^{2,\beta}$ regularity(Theorem 1.1,\cite{Savin2013}), there holds
 \begin{equation*}
 \|u_{\lambda}\|_{C^{2,\beta}\left(\big( [0,2]^{2}\setminus[0,1/2)^2\big)\times [-2,2]^{n-2}\right)}\le C
 \end{equation*}
 for some positive constant $C$ independent of $\lambda$. Since $\frac{x_0}{|x_0|}\in  \big( [0,2]^{2}\setminus[0,1/2)^2\big)\times [-2,2]^{n-2}$, we obtain that
 \begin{equation*}
 |D^2 u(x)|\le C
 \end{equation*}
 for any point $x$ satisfying \eqref{c11-pt1}.

\par  For $x_0\in \Omega$ near $0\in \Gamma_{k}(\Omega)\backslash \Gamma_{k-1}(\Omega)$, $0\le k\le n-2$. We can use the method of induction and repeat the above argument to get the desired $C^{1,1}$-estimate.
\end{proof}

\begin{remark} The proof of Lemma \ref{lem-C11} only needs the existence of a $C^{1,1}-$sub-solution $\underline u$. This implies we don't need the compatibility condition (C1) for global $C^{1,1}$-estimates. Moreover, there is no restriction on the polytopes.
\end{remark}

\section{Solutions in the cones}
The $C^2$ regularity of the solutions of \eqref{intro-1} in $n-$polytope $\Omega$ is related to the Liouville type theorem in the cone.
It is important to discuss some related properties  of solutions of the Monge-Amp\`ere equation in the cone.
\par
Let $\Sigma$ be a connected sub-domain in $\mathbb S^{n-1}$. Define cone $V_\Sigma$ in $\mathbb R^n$ as follows.
\begin{equation*}
V_\Sigma=\{tx|x\in \Sigma,\	t>0\}.
\end{equation*}
Let $-\Delta_\theta$ be the Laplace-Beltrami operator on $\mathbb S^{n-1}$.
Consider the following eigenvalue problem:
\begin{equation*}
\begin{split}
-\Delta_\theta \phi_\Sigma&=\lambda_1 (\Sigma)\phi_\Sigma\quad\text{in } \Sigma,\\
\phi_{\Sigma}& =0,\quad \text{on}\quad \partial\Sigma.
\end{split}
\end{equation*}
Here $\lambda_1 (\Sigma)>0$ is the first eigenvalue and  $\phi_\Sigma\in H_0^1(\Sigma)\cap C^\infty(\Sigma)$ is the corresponding eigenfunction.  We also choose
\begin{equation*}
\phi_\Sigma>0,\quad \text{in}\quad \Sigma,\quad \|\phi_{\Sigma}\|_{L^2(\Sigma)}=1.
\end{equation*}
Then, there holds
\begin{equation}\label{eq-engen}
\begin{split}
-\Delta(|x|^\mu \phi_\Sigma)=-[\mu(\mu+n-2)-\lambda_1 (\Sigma)]|x|^{\mu-2} \phi_\Sigma,\quad\text{in } V_\Sigma.
\end{split}
\end{equation}
The following Liouville type theorem is important for our $C^2-$estimates.
\begin{theorem}\label{thm-liou1}
Let $\Sigma,V_\Sigma$ be given as above with $\lambda_1(\Sigma)=\mu(\mu+n-2)$, $\mu>2$.
Suppose that $u\in  C(\overline{V_\Sigma})\cap C^2(V_\Sigma)$ is a convex solution of
\begin{equation}\label{liou-1}
\begin{split}
\det D^2 u  &=1,\quad \text{in } V_\Sigma,\\
u&=\frac{|x|^2}{2},\quad\text{on }\partial V_\Sigma.
\end{split}
\end{equation}
In addition, we assume the sub-solution condition
\begin{equation}\label{sub-sol-cone}
u(x)\ge \frac{|x|^2}{2},\quad\text{in}\quad V_\Sigma,
\end{equation}
and the asymptotic condition
\begin{equation}\label{liou-growth}
\lim_{x\rightarrow \infty}\frac{u(x)}{|x|^{\mu'}}=0,\quad \text{for some}\quad \mu'\in (2,\mu).
\end{equation}
Then,
\begin{equation*}
u(x)=\frac{|x|^2}{2}.
\end{equation*}

\end{theorem}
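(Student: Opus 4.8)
The plan is to reduce the statement to a Phragm\'en--Lindel\"of principle for subharmonic functions in the cone $V_\Sigma$. Put $w=u-\tfrac{|x|^{2}}{2}$. By \eqref{liou-1} and \eqref{sub-sol-cone} we have $w\in C(\overline{V_\Sigma})\cap C^{2}(V_\Sigma)$, $w\ge 0$ in $V_\Sigma$, and $w=0$ on $\partial V_\Sigma$. The first point is that $w$ is subharmonic: since $u$ is convex and $\det D^{2}u=1$, the eigenvalues $\lambda_{1},\dots,\lambda_{n}$ of $D^{2}u$ are positive with $\lambda_{1}\cdots\lambda_{n}=1$, so the AM--GM inequality gives $\Delta u=\sum_{i}\lambda_{i}\ge n(\lambda_{1}\cdots\lambda_{n})^{1/n}=n=\Delta\big(\tfrac{|x|^{2}}{2}\big)$, i.e. $\Delta w\ge 0$ in $V_\Sigma$. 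The second point is that \eqref{liou-growth} transfers to $w$: since $\mu'>2$ we have $\tfrac{|x|^{2}}{2}=o(|x|^{\mu'})$, hence $w(x)=o(|x|^{\mu'})$ as $|x|\to\infty$. (One could instead observe that $w$ solves the linearized equation $a^{ij}\partial_{ij}w=0$ with $a^{ij}=\int_{0}^{1}\mathrm{cof}\big(tD^{2}u+(1-t)I\big)^{ij}\,dt$, but the Laplacian is enough here.)

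It thus suffices to prove: a function that is subharmonic in $V_\Sigma$, nonpositive on $\partial V_\Sigma$, and $o(|x|^{\mu'})$ at infinity with $\mu'<\mu$ must be nonpositive in $V_\Sigma$. For this I would use a barrier associated to a slightly enlarged cone. Since $\mu>2$ forces $\lambda_{1}(\Sigma)>0$, we have $\Sigma\subsetneq\mathbb S^{n-1}$, so by monotonicity and continuity of the first Dirichlet eigenvalue under domain perturbation one can choose a smooth domain $\Sigma'$ with $\overline{\Sigma}\subset\Sigma'\subset\mathbb S^{n-1}$ whose first eigenvalue satisfies $\lambda_{1}(\Sigma')=\mu''(\mu''+n-2)$ for some $\mu''\in(\mu',\mu)$. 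Let $\phi_{\Sigma'}>0$ be the corresponding first eigenfunction; then $E(x):=|x|^{\mu''}\phi_{\Sigma'}(x/|x|)$ is, by the computation in \eqref{eq-engen}, harmonic in $V_{\Sigma'}\supset V_\Sigma$ and strictly positive on $\overline{V_\Sigma}\setminus\{0\}$, with $\phi_{\Sigma'}\ge c_{0}>0$ on the compact set $\overline{\Sigma}$.

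Now fix $\eps>0$ and apply the maximum principle to the subharmonic function $w-\eps E$ on $V_\Sigma\cap B_{R}$. On $\partial V_\Sigma\cap\overline{B_{R}}$ it is $\le 0$ since $w\le 0$ and $E\ge 0$. On $V_\Sigma\cap\partial B_{R}$ one has $w\le \delta(R)R^{\mu'}$ with $\delta(R)\to 0$ by \eqref{liou-growth}, so $w-\eps E\le \delta(R)R^{\mu'}-\eps c_{0}R^{\mu''}$, which is $\le 0$ once $R$ is large (depending on $\eps$) because $\mu''>\mu'$. The maximum principle then gives $w\le\eps E$ on $V_\Sigma\cap B_{R}$; letting $R\to\infty$ and then $\eps\to 0$ yields $w\le 0$ in $V_\Sigma$, and combined with $w\ge 0$ this forces $w\equiv 0$, i.e. $u\equiv\tfrac{|x|^{2}}{2}$.

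The delicate step, and the one I would check most carefully, is the barrier construction: one needs $\Sigma'$ to contain $\overline{\Sigma}$ in its interior, so that $E$ is bounded below by a fixed positive multiple of $R^{\mu''}$ on the whole spherical cap $V_\Sigma\cap\partial B_{R}$ (and not merely on its interior, which would fail near $\partial\Sigma$ if one used $\phi_\Sigma$ itself), while keeping $\mu''$ strictly between the prescribed growth exponent $\mu'$ and the critical exponent $\mu$; the existence of such a $\Sigma'$ rests precisely on $\Sigma\subsetneq\mathbb S^{n-1}$ together with continuity of $\lambda_{1}$ under domain perturbation. The subharmonicity of $w$, the transfer of the growth bound, and the final comparison are then routine.
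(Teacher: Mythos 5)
Your proof is correct, and it takes a genuinely different route from the paper. Both arguments use the same barrier $u_\varepsilon=\tfrac{|x|^2}{2}+\varepsilon|x|^{\mu''}\phi_{\Sigma'}$ built from a slightly enlarged spherical domain $\Sigma'\supset\supset\Sigma$ with exponent $\mu''\in(\mu',\mu)$, and both exploit that $\phi_{\Sigma'}$ is bounded below on $\overline{\Sigma}$ to absorb the sub-$|x|^{\mu''}$ growth of $u$ on $\partial B_R$. The difference lies entirely in how $u\le u_\varepsilon$ is deduced. The paper stays inside the nonlinear framework: if $u_\varepsilon-u$ had a negative interior minimum at $\bar x$, then $D^2 u_\varepsilon(\bar x)\ge D^2 u(\bar x)>0$, and a two-case computation (Hadamard's inequality plus $\ln(1+t)<t$ in the first case, a Fischer-type block inequality in the second) shows $\det D^2 u_\varepsilon(\bar x)<1=\det D^2 u(\bar x)$, a contradiction. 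You instead linearize once and for all via the arithmetic--geometric mean inequality: $\det D^2u=1$ and convexity give $\Delta u\ge n$, so $w=u-\tfrac{|x|^2}{2}$ is subharmonic, $E=|x|^{\mu''}\phi_{\Sigma'}$ is harmonic, and $w-\varepsilon E$ is handled by the plain maximum principle for subharmonic functions on $V_\Sigma\cap B_R$. Your route is shorter and avoids the case distinction entirely; its one-line observation $\Delta u\ge n$ is in fact the cleanest form of the inequality the paper re-derives on the supersolution side. The paper's version, by contrast, exhibits the supersolution property of $u_\varepsilon$ for the Monge--Amp\`ere operator itself, which is closer to the mechanism reused in Theorem \ref{thm-2+growth}. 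Both are valid; yours is the more elementary reduction.
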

\begin{proof}
Let $\Sigma\subset\subset \Sigma_0\subset \mathbb S^{n-1}$ be such that $\lambda_1(\Sigma_0)=\mu_0(n-2+\mu_0)$ for some $\mu_0\in (\mu',\mu)$.
Then,
$$0<c_1 \leq\phi_{\Sigma'} \leq c_2<+\infty,\quad \text{in}\quad  \overline{\Sigma}.$$
Also, there holds
$$-\Delta\big(|x|^{\mu_0} \phi_{\Sigma_0})=0.$$
$\forall \varepsilon>0$, set
$$u_{\varepsilon}(x)=\frac{|x|^2}{2}+\varepsilon|x|^{\mu_0}\phi_{\Sigma_0}.$$
By \eqref{liou-growth}, $u_{\varepsilon}>u$ for $x\in V_\Sigma\bigcap \overline{B_{R_\varepsilon}(0)}^c$ when $R_\varepsilon$ is sufficiently large.

Next, we claim
\begin{equation}\label{u-epsilon}
u_{\varepsilon}\geq u\quad\text{in } V_\Sigma.\end{equation}
If \eqref{u-epsilon} is not true,  we can assume $u_{\varepsilon}(x)-u(x)$ attains its negative minimum at $\bar x$ in the interior of $V_\Sigma$. Then at $\bar x$,
\begin{equation*}
D^2 u_{\varepsilon}(\bar x)\ge D^2 u(\bar x),
\end{equation*}
i.e. $D^2 u_{\varepsilon}$ is convex near $\bar x$.
Consider the following two cases.
\begin{itemize}
	\item[1.] $(|x|^{\mu_0}\phi_{\Sigma_0})_{ii}(\bar x) \neq 0$ for some $i=1,\cdots,n$. Then,
\begin{align*}
\det D^2 u_{\varepsilon}(\bar x)&\leq \prod_{j=1}^nu_{\varepsilon, jj}(\bar x) \\
&=e^{\sum\limits_{j=1}^n \ln (1+\varepsilon(|x|^{\mu_0}\phi_{\Sigma_0})_{jj} ) }|_{x=\bar x}\\
&<e^{\Delta (\varepsilon|x|^{\mu_0}\phi_{\Sigma_0})}|_{x=\bar x}=1,
\end{align*}
which yields a contradiction.
\item[2.] $(|x|^{\mu_0}\phi_{\Sigma_0})_{ii}(\bar x) =0$, $i=1,...,n$. Since $\frac{\partial^2}{\partial r^2}(|x|^{\mu_0}\phi_{\Sigma_0})(\bar x)>0$, one gets
 \begin{equation*}
 D^2 (|x|^{\mu_0}\phi_{\Sigma_0})(\bar x) \neq O_{n\times n}.
 \end{equation*} Without loss of generality, we assume $(|x|^{\mu_0}\phi_{\Sigma_0})_{12}(\bar x) \neq 0$.
 Then,
\begin{align*}
\det D^2 u_{\varepsilon}(\bar x)&\leq (u_{\varepsilon, 11}(\bar x)u_{\varepsilon, 22}(\bar x)-u_{\varepsilon, 12}^2(\bar x)) \prod_{j=3}^nu_{\varepsilon, jj}(\bar x) \\
&=1-\varepsilon^{2}\left[(|x|^{\mu_0}\phi_{\Sigma_0})_{12}(\bar x)\right]^2<1.
\end{align*}
This also leads to contradiction.
\end{itemize}
Hence, \eqref{u-epsilon} holds. Taking $\varepsilon\rightarrow 0$, we have $u\le \frac{|x|^2}{2} $ in $V_\Sigma$. By assumption \eqref{liou-growth},
one gets $u=\frac{|x|^2}{2}$ in $V_\Sigma$.
\end{proof}

\begin{remark}\label{rem-liou-growth}
 If $V_\Sigma$ is a convex cone and $V_\Sigma$ is not the half space, then the growth condition \eqref{liou-growth} can be dropped. Since in this case, from the convexity of the solution and the boundary condition, one has
 \begin{equation*}
 u(x)\le C|x|^2,\quad \forall x\in V_\Sigma.
 \end{equation*}
\end{remark}

\begin{remark}
The condition \eqref{sub-sol-cone} arises naturally if we perform blow-up analysis for \eqref{intro-1} with $C^2$ sub-solution. It can be easily seen that this condition can not be dropped by the following example.  Let $p_0\in V_\Sigma\cap \mathbb S^{n-1}$ be a fixed point. Consider the following Dirichlet problem:
\begin{equation}\label{conic-solution}
\begin{split}
\det D^2 v_{Ra}&=1,\quad \text{in}\quad V_\Sigma\cap B_R(0),\\
v_{Ra}(x)&=\frac 12 |x|^2,\quad \text{on}\quad \partial V_\Sigma\cap B_R(0),\\
v_{Ra}(p_0)&=a\in (-\infty,1/2).
\end{split}
\end{equation}
Then \eqref{conic-solution} admits a solution $v_{Ra}$ by adjusting the boundary data of $v_{Ra}$ on $\partial B_R(0)\cap  V_\Sigma$. Taking $R\rightarrow +\infty$, then $v_{Ra}$ converges to a solution $v_a$ of \eqref{liou-1} which is strictly less than $\frac 12 |x|^2$.
\end{remark}

\begin{remark}\label{emk-liouville1}
Note that $\lambda_1\big(((\mathbb R_{+})^2\times\mathbb R^{n-2})\bigcap\mathbb S^{n-1}\big)=2n$. Let $V_\Sigma\subset \mathbb R^n$ be a convex cone which is bounded by a finite number of planes intersecting at the origin with $\theta(\frac{|x|^2}{2},0,V_\Sigma)\leq \frac{\pi}{2}$. If the number of $(n-1)$ faces of $V_\Sigma$ is greater than or equal to three, or if $V_\Sigma$ has only two $(n-1)$ faces with $\theta(\frac{|x|^2}{2},0,V_\Sigma)<\frac{\pi}{2}$, then
$$\lambda_1\big(V_\Sigma\bigcap\mathbb S^{n-1}\big)>2n.$$
Consequently, Theorem \ref{thm-liou1} is applicable to such $V_\Sigma$.
\end{remark}

For our later use, we present a local estimate in the cone in the following theorem.
\begin{theorem}\label{thm-2+growth}
Let $\Sigma,V_\Sigma$ be given as above with $\lambda_1(\Sigma)=\mu(\mu+n-2)$, $\mu>2$.
Suppose that $ u\in  C(\overline{V_\Sigma\cap B_1(0)})$  is a convex solution of
\begin{equation}\label{cone2+growth}
\begin{split}
\det D^2 u  &=f\quad \text{in } V_\Sigma\cap B_1(0) ,\\
u&=\varphi\quad\text{on }\partial V_\Sigma\cap B_1(0).
\end{split}
\end{equation}
 Assume that
\begin{equation}\label{cone2+growth-condition}
\begin{split}
|f(x)-1|&\leq M |x|^{\beta}\quad \text{in } V_\Sigma\cap B_1(0),\\
\big|\varphi(x)-\frac{|x|^2}{2}\big|&\leq M |x|^{2+\beta}\quad\text{on }\partial V_\Sigma\cap B_1(0),\\
\left|u(x)-\frac{|x|^2}{2}\right| & \le \omega(|x|)|x|^2,\quad \text{in } V_\Sigma\cap B_1(0),
\end{split}
\end{equation}
for some constant $\beta \in (0,1)$, where $M$ is a positive constant and $\omega(t)$ is a non-decreasing function satisfying $\lim_{t\rightarrow 0^+}\omega(t)=0$. Then, there holds
\begin{equation*}
\big|u(x)-\frac{|x|^2}{2}\big|\leq C |x|^{2+\alpha}\quad \text{in } V\cap B_{\delta}(0),
\end{equation*}
where $\alpha\in(0,\beta]$, $\delta\in(0,\frac{1}{10})$ and $C>0$ are constants depending only on
$n$, $\mu$, $M$ and $\omega(t)$.
\end{theorem}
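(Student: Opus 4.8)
The plan is a dyadic iteration on the scales $r_k=\delta^{k}$, showing that the excess $E(r):=\sup_{V_\Sigma\cap B_r}\bigl|u-\tfrac{|x|^{2}}{2}\bigr|$ obeys $E(\delta^{k})\le C\,\delta^{(2+\alpha)k}$. Since the rescaling $u\mapsto u(\rho\,\cdot)/\rho^{2}$ sends \eqref{cone2+growth}–\eqref{cone2+growth-condition} to a problem of the same shape with $M$, $\omega(t)$ replaced by $M\rho^{\beta}$, $\omega(\rho t)$, everything reduces to a one–step improvement lemma: there are $\delta\in(0,\tfrac1{10})$, $\alpha\in(0,\beta]$ depending only on $n,\mu$, and a smallness threshold for $M$ and for $S:=\sup_{V_\Sigma\cap B_1}|u-\tfrac{|x|^{2}}{2}|$, such that
\[
\sup_{V_\Sigma\cap B_\delta}\Bigl|u-\tfrac{|x|^{2}}{2}\Bigr|\ \le\ \tfrac12\,\delta^{2+\alpha}\,S\ +\ C_0 M .
\]
Iterating this at all scales $\delta^{k}$, $k\ge k_0$ (with $k_0$ chosen so that $\omega(\delta^{k_0})$ and $M\delta^{\beta k_0}$ are below the threshold), using the a priori bound $E(\delta^{k})\le\omega(\delta^{k})\delta^{2k}$ to start the recursion and summing the resulting geometric series, gives $E(\delta^{k})\le C\delta^{(2+\alpha)k}$ for every $k$, hence $|u-\tfrac{|x|^{2}}{2}|\le C|x|^{2+\alpha}$ on $B_\delta$; here the dependence of $C$, $\delta$, $k_0$ on $M$ and on the modulus $\omega$ enters exactly as in the statement.

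\textbf{The one–step improvement.} I would prove it by comparison against explicit barriers built from the homogeneous profile that governs the decay. Pick $\Sigma\subset\subset\Sigma_0\subset\mathbb S^{n-1}$ with $\lambda_1(\Sigma_0)=\mu_0(\mu_0+n-2)$, $\mu_0\in(2,\mu)$ (possible since $\lambda_1$ is strictly decreasing under enlargement); then $0<c_1\le\phi_{\Sigma_0}\le c_2$ on $\overline\Sigma$, and by \eqref{eq-engen} $g(x):=|x|^{\mu_0}\phi_{\Sigma_0}(x/|x|)$ is a positive harmonic function on $V_{\Sigma_0}\supset V_\Sigma$ vanishing on $\partial V_{\Sigma_0}$, with $g\in C^2$ across the origin because $\mu_0>2$. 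Fix $\beta'\in\bigl(0,\min\{\beta,2(\mu_0-2)\}\bigr)$ and set
\[
\bar w=\tfrac{|x|^{2}}{2}+\tfrac{S}{c_1}\,g+C_0M\bigl(2-|x|^{2+\beta'}\bigr),\qquad
\underline w=\tfrac{|x|^{2}}{2}-\tfrac{S}{c_1}\,g-C_0M\bigl(2-|x|^{2+\beta'}\bigr)+C_1S^{2}\bigl(|x|^{2+\beta'}-1\bigr),
\]
with $C_0,C_1$ large constants; both are convex $C^2$ perturbations of $\tfrac{|x|^{2}}{2}$ once $M,S$ are small, and using $\phi_{\Sigma_0}/c_1\ge1$ together with \eqref{cone2+growth-condition} one checks $\underline w\le u\le\bar w$ on $\partial(V_\Sigma\cap B_1)$. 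For $\bar w$ I would copy the computation in the proof of Theorem \ref{thm-liou1}: by Hadamard's inequality and $\ln(1+t)\le t$,
\[
\det D^2\bar w\ \le\ \prod_j(D^2\bar w)_{jj}\ \le\ \exp\!\Bigl(\Delta\bigl(\bar w-\tfrac{|x|^{2}}{2}\bigr)\Bigr)\ =\ \exp\!\bigl(-C_0M(2+\beta')(n+\beta')|x|^{\beta'}\bigr)\ \le\ f,
\]
the harmonic term contributing nothing to the Laplacian. For $\underline w$ I would expand $\det(I+P)\ge 1+\mathrm{tr}\,P-C_n\|P\|^{2}$ with $P=D^2(\underline w-\tfrac{|x|^{2}}{2})$, note $\mathrm{tr}\,P=(C_0M+C_1S^{2})(2+\beta')(n+\beta')|x|^{\beta'}$, and use $\|P\|^{2}\lesssim S^{2}|x|^{2(\mu_0-2)}+(M+S^{2})^{2}|x|^{2\beta'}$ together with $2(\mu_0-2)\ge\beta'$ and the smallness of $M+S$ to absorb the error into the trace term, so that $\det D^2\underline w\ge 1+M|x|^{\beta'}\ge f$. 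The Alexandrov comparison principle then gives $\underline w\le u\le\bar w$ in $V_\Sigma\cap B_1$, and evaluating on $B_\delta$ yields $|u-\tfrac{|x|^{2}}{2}|\le\tfrac{c_2}{c_1}S\delta^{\mu_0}+2C_0M+C_1S^{2}$; choosing $\alpha<\mu_0-2$ and $\delta$ with $\tfrac{c_2}{c_1}\delta^{\mu_0-2-\alpha}\le\tfrac14$ (and $k_0$ large so that $C_1S\le\tfrac{c_2}{4c_1}\delta^{\mu_0}$ at the working scale) this is the asserted estimate.

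\textbf{Main obstacle.} The delicate point is the sub–solution bound for $\underline w$ near the vertex of the cone. The non-radial second derivatives of $g$ contribute a negative term of size $S^{2}|x|^{2(\mu_0-2)}$ to $\det D^2\underline w$, and, because $\mu_0$ may be forced very close to $2$, this term can dominate $|x|^{\beta'}$ near the origin; the remedy is to insert the quadratic-in-$S$ correction $C_1S^{2}(|x|^{2+\beta'}-1)$, whose Laplacian $\sim C_1S^{2}|x|^{\beta'}$ swamps that error once $C_1$ is large and $\beta'<2(\mu_0-2)$. Getting the bookkeeping right — verifying convexity and the boundary inequalities of both barriers, checking that the newly added term is itself subordinate to the main gain $\tfrac{c_2}{c_1}S\delta^{\mu_0}$ at all scales $k\ge k_0$, and tracking how the smallness requirements on $M$ and on $\omega(\delta^{k_0})$ propagate through the geometric summation — is where essentially all the work lies; the remaining estimates are routine.
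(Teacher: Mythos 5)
Your proposal is correct in outline, but it takes a genuinely different route from the paper, and the paper's argument is considerably simpler. The paper proves this as a \emph{one-shot} comparison on a fixed ball $B_\delta$ rather than by dyadic iteration: it enlarges $\Sigma$ to $\Sigma'$ with eigenexponent $\mu'\in(2,\mu)$, chooses $\alpha=\min(\beta,\tfrac{\mu'-2}{2})$ so that $2+\alpha<\mu'$, and uses as barrier the \emph{strictly superharmonic} profile $|x|^{2+\alpha}\phi_{\Sigma'}$, for which $-\Delta\bigl(|x|^{2+\alpha}\phi_{\Sigma'}\bigr)=\gamma|x|^{\alpha}\phi_{\Sigma'}$ with $\gamma=\lambda_1(\Sigma')-(2+\alpha)(n+\alpha)>0$. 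The positive margin $\gamma|x|^{\alpha}$ directly dominates the error $M|x|^{\beta}$ (since $\alpha\le\beta$), so $u_\pm=\tfrac{|x|^2}{2}\pm A|x|^{2+\alpha}\phi_{\Sigma'}$ are straightaway a super/sub-solution pair on all of $B_\delta$, and the coefficient $A\sim\max\{M,\omega(\delta)\delta^{-\alpha}\}$ is chosen once to handle both boundary pieces and the PDE. Your approach instead uses the exactly harmonic profile $g=|x|^{\mu_0}\phi_{\Sigma_0}$ (zero Laplacian margin), which forces you to (i) add the lower-order corrections $\pm C_0M(2-|x|^{2+\beta'})$ and the quadratic-in-$S$ term $C_1S^2(|x|^{2+\beta'}-1)$ to recover a workable margin, (ii) impose smallness of $M$ and of the excess $S$, and (iii) propagate the decay through a dyadic iteration with careful bookkeeping, starting at a scale $\delta^{k_0}$ where rescaling has made $M\delta^{k_0\beta}$ and $\omega(\delta^{k_0})$ small. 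Both routes reach the stated estimate with the stated dependence of $C$, $\delta$, $\alpha$, and the obstacles you flag (convexity and boundary verification of both barriers, absorbing the $S^2|x|^{2(\mu_0-2)}$ error since $\beta'<2(\mu_0-2)$, tracking the thresholds through the geometric summation) are real and need to be carried out, but the paper's choice of a \emph{subcritical} homogeneity $2+\alpha<\mu'$ sidesteps the entire iteration and the quadratic correction, which is the reason its proof is short.
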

\begin{proof}
Let $\Sigma\subset\subset \Sigma'\subset \mathbb S^{n-1}$ be such that $\lambda_1(\Sigma')=\mu'(n-2+\mu')$ for some $\mu'>2$.
Then,
$$0<c_1 \leq\phi_{\Sigma'} \leq c_2<+\infty,\quad \text{in}\quad  \overline{\Sigma}.$$
Take
\begin{equation*}
\alpha=\min\left(\beta,\frac{\mu'-2}{2}\right),\quad \gamma=\mu'(n-2+\mu')-(2+\alpha)(\alpha+n)>0.
\end{equation*}
Then,
\begin{equation*}
-\Delta\big(|x|^{2+\alpha} \phi_{\Sigma'})=\gamma|x|^{\alpha} \phi_{\Sigma'}, \text{ in } V_{\Sigma}.
\end{equation*}
Let $c_3>0$ be such that
 \begin{equation*}
\sum_{i,j=1}^n\big|\partial_{ij}(|x|^{2+\alpha} \phi_{\Sigma'})\big|\leq c_3 |x|^{\alpha} \phi_{\Sigma'}, \text{ in } V_\Sigma.
\end{equation*}
Set $A=c_1^{-1}\max\{(1+2\gamma^{-1})M,\omega(\delta)\delta^{-\alpha}\}$ where $\delta\in (0,\frac{1}{10})$ is chosen to satisfy
 $$ 10n^{4n}A\delta^{\alpha}c_2 c_3\leq \min\{\frac{\gamma}{2},1\}.$$
Denote
 $$u_+(x)=\frac{|x|^2}{2}+A|x|^{2+\alpha} \phi_{\Sigma'}.$$
  By \eqref{cone2+growth-condition} and the choice of $\delta$ and $A$, it is easy to check  that $u_+$ is convex in $V\cap B_{\delta}(0)$ and
 $$u\leq u_+, \text{ on } \partial \big(V\cap B_{\delta}(0)\big).$$
By a direct computation, we have
\begin{equation*}
\begin{split}
\det D^2 u_+ &\leq 1+\Delta(A|x|^{2+\alpha} \phi_{\Sigma'})+\frac{\gamma}{2}(A|x|^{\alpha} \phi_{\Sigma'})\\
&\leq 1-\frac{\gamma}{2}A|x|^{\alpha}\\
&\leq 1-M|x|^{\beta}\\
&\leq f \quad \text{in } V\cap B_{\delta}(0).
\end{split}
\end{equation*}
Hence, by the maximum principle,  $u\leq u_+$ in $V\cap B_{\delta}(0)$.

Set $u_-=\frac{|x|^2}{2}-A|x|^{2+
\alpha} \phi_{\Sigma'}$.
We can derive the estimate $u_{-}\leq u$ in $V\cap B_{\delta}(0)$ similarly.
\end{proof}

In order to prove Theorem \ref{mainthm1}, we need to pay special attention to the  rigidity of the solutions of equation \eqref{liou-1} in $V =(\mathbb R_{+})^2\times\mathbb R^{n-2} $. For some $\mu\in(0,1)$, set
\begin{equation}\label{def-Amu}
 A'_{\mu}= \begin{pmatrix}
1 & -\cot(\mu \pi)\\[3pt]
 0 & \csc(\mu \pi)
 \end{pmatrix}_{2\times2}
 ,\quad
A_{\mu}= \begin{pmatrix}
A'_{\mu}&O_{2\times(n-2)}\\[3pt]
O_{(n-2)\times2} & I_{(n-2)\times(n-2)}
 \end{pmatrix}_{n\times n}.
\end{equation}

 Then, $$A_{\mu}(V_\mu\times\mathbb R^{n-2})=(\mathbb R_{+})^2\times\mathbb R^{n-2},$$
  \begin{equation*}
\frac{|x|^2}{2}+\cos(\mu\pi) x_1x_2=\left(\frac{|x|^2}{2}\right)\circ A_{\mu}^{-1}\quad \text{in}\quad  (\mathbb R_{+})^2\times\mathbb R^{n-2}.
\end{equation*}

Moreover, if $u$ solves \eqref{liou-1} for $V=V_\mu\times\mathbb R^{n-2}$ if and only if $u\circ A_{\mu}^{-1}$ solves
\begin{equation*}
\begin{split}
\det D^2 u  &=\sin^2(\mu \pi)\quad \text{in } (\mathbb R_{+})^2\times\mathbb R^{n-2},\\
u&=\frac{|x|^2}{2}\quad\text{on }\partial \big((\mathbb R_{+})^2\times\mathbb R^{n-2}\big).
\end{split}
\end{equation*}

\begin{lemma}\label{lemma-liou1}
Let $c\in(0,1]$ and $V= (\mathbb R_{+})^2\times\mathbb R^{n-2}$.
Suppose $u\in  C(\overline{V})$ is a convex solution of
\begin{equation}\label{liou-2}
\begin{split}
\det D^2 u  &=c\quad \text{in } V ,\\
u&=\frac{|x|^2}{2}\quad\text{on }\partial V.
\end{split}
\end{equation}
In addition, if
 $u\ge A|x|^2$ for some positive constant $A\in (0,1)$,  then
\begin{equation*}
u(x)\leq \frac{|x|^2}{2}+\sqrt{1-c}x_1x_2  \quad \text{in } V .
\end{equation*}

\end{lemma}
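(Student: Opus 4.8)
The plan is to construct an explicit convex supersolution of the form $w(x)=\frac{|x|^2}{2}+\sqrt{1-c}\,x_1x_2$ and compare it with $u$, using the sub-solution bound $u\ge A|x|^2$ only to control the behavior near the ``edge'' $\{x_1=x_2=0\}$ and at infinity. First I would verify that $w$ is indeed a convex solution of \eqref{liou-2}: its Hessian in the $(x_1,x_2)$ block is $\begin{pmatrix}1 & \sqrt{1-c}\\ \sqrt{1-c}&1\end{pmatrix}$, which has determinant $c>0$ and is positive definite (since $c\le 1$), and the remaining $(n-2)$ directions contribute an identity block, so $\det D^2 w=c$ in $V$; moreover $x_1x_2=0$ on $\partial V$, so $w=\frac{|x|^2}{2}$ on $\partial V$. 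Thus $u$ and $w$ solve the same Dirichlet problem in the unbounded domain $V$, and the content of the lemma is a comparison principle at infinity plus along the non-smooth part of $\partial V$.

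Next I would run the comparison argument. Fix $\varepsilon>0$ and set $w_\varepsilon(x)=w(x)+\varepsilon\, p(x)$ for a suitable auxiliary function $p$ that is a convex, harmonic-type barrier growing slower than $|x|^2$ at infinity but strictly positive in the interior — for instance, following the template of Theorem \ref{thm-liou1} and Theorem \ref{thm-2+growth}, one can take $p(x)=|x|^{\mu_0}\phi_{\Sigma_0}$ for a slightly larger cone $\Sigma_0\supset\supset\Sigma$ with exponent $\mu_0\in(2,\mu)$ where $\mu$ corresponds to $\lambda_1\big(((\mathbb R_+)^2\times\mathbb R^{n-2})\cap\mathbb S^{n-1}\big)=2n$, i.e. $\mu=2$ — but since here $\Sigma$ is exactly the quarter-space slice, I would instead use a barrier adapted to this specific cone, e.g. a multiple of $x_1x_2$ composed with a small rotation, or a power $|x|^{2-\eta}$ term, chosen so that adding $\varepsilon p$ keeps the function convex and strictly decreases (or does not increase) $\det D^2$. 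Then I claim $u\le w_\varepsilon$ in $V$: if not, $u-w_\varepsilon$ attains a positive interior maximum at some $\bar x$ (the growth $u\ge A|x|^2$ combined with $u\le C|x|^2$ from convexity and the boundary data, against the slower-than-quadratic correction, rules out the max escaping to infinity; the boundary values agree on $\partial V\setminus\{0\}$ and the quadratic bound near the edge handles the remaining boundary points), and at $\bar x$ one has $D^2 u(\bar x)\le D^2 w_\varepsilon(\bar x)$, whence $\det D^2 u(\bar x)\le \det D^2 w_\varepsilon(\bar x)<c$, contradicting \eqref{liou-2}. Letting $\varepsilon\to 0$ gives $u\le w$.

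The main obstacle I anticipate is the behavior at the edge $E=\{x_1=x_2=0\}\times\mathbb R^{n-2}$, which is the $(n-2)$-dimensional non-smooth stratum of $\partial V$: the comparison principle needs $u\le w_\varepsilon$ on all of $\partial(V\cap B_R)$ before letting $R\to\infty$, and near $E$ one only controls $u$ through $u\ge A|x|^2$ (lower bound, the wrong direction!) together with the convexity bound $u\le C|x|^2$. The key point is that $w$ also vanishes to second order on $E$ and equals $\frac{|x|^2}{2}$ there, and $u=\frac{|x|^2}{2}$ on the smooth faces of $\partial V$ which are dense up to $E$; so by continuity of $u$ up to $\overline V$ (given) and of $w$, one gets $u=w$ on $E$ as well, and the comparison set is genuinely all of $\partial(V\cap B_R)$ minus a region where the strict inequality from the $\varepsilon p$ term dominates. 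A second, more technical obstacle is choosing $p$ so that $w_\varepsilon=w+\varepsilon p$ remains convex throughout $V\cap B_R$ while still being a (sub/super)solution with the right strict sign of $\det D^2 - c$; this is exactly the kind of computation carried out in the proofs of Theorem \ref{thm-liou1} and Theorem \ref{thm-2+growth}, and I would model the estimate on those — splitting into the case where the perturbation's Hessian has a nonzero diagonal entry versus the case where only an off-diagonal entry is nonzero, as in the two-case analysis there — rather than reproduce it in detail here.
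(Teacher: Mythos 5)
Your plan hinges on a direct comparison $u\le w+\varepsilon p$ with a barrier $p$ that vanishes on $\partial V$ and a subsequent $\varepsilon\to0$, but the comparison cannot be closed at infinity. You correctly notice that for the cone $(\mathbb R_+)^2\times\mathbb R^{n-2}$ the first Dirichlet eigenvalue on $\Sigma$ is exactly $2n$, hence the critical homogeneity exponent is $\mu=2$ and the harmonic barrier $|x|^{\mu_0}\phi_{\Sigma_0}$ with $\mu_0>2$ used in Theorem \ref{thm-liou1} and Theorem \ref{thm-2+growth} does not exist here; but the substitutes you offer do not repair this. Any $p$ growing at most like $|x|^{2}$ (your $x_1x_2$ option) or slower (your $|x|^{2-\eta}$ option) cannot dominate $u-w$, because a priori $u-w$ itself can grow like $|x|^2$: the only upper bound available is $u\le C|x|^2$ from convexity, or at best $u\le \overline H=\tfrac12|x|^2+x_1x_2$ via the convex-envelope observation, and in either case $u-w$ is $O(|x|^2)$. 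So a positive supremum of $u-w_\varepsilon$ may escape to infinity and no interior maximum need exist; the claim in your paragraph that the quadratic bounds on $u$ ``against the slower-than-quadratic correction, rules out the max escaping to infinity'' is exactly backwards. Separately, the two concrete alternatives are also problematic on their own: a rotation of $x_1x_2$ no longer vanishes on $\partial V$, and $|x|^{2-\eta}$ neither vanishes on $\partial V$ nor has a sign-controlled Laplacian that makes $w+\varepsilon p$ a strict supersolution in the needed way.

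The paper's proof avoids the infinity issue entirely by a different two-step scheme. In Step 1 it establishes the initial quadratic comparison $u\le P_{\varepsilon,\sigma}=(\tfrac12+\varepsilon)|x|^2+(\sqrt{1-c}+\sigma)x_1x_2$ for some small $\varepsilon,\sigma>0$, not by a barrier comparison at infinity, but by a blow-up along $\partial V$ (using explicit trapping functions $\underline H,\overline H$ on a ball, where the growth hypothesis $u\ge A|x|^2$ determines $R_0$) to get $u\le(\tfrac12+\varepsilon)|x|^2$ angularly near $\partial V$, combined with the convex-envelope bound $u\le\overline H$ in the interior sector. In Step 2 it slides the off-diagonal coefficient down: letting $\widetilde\sigma$ be the infimum of admissible $\sigma$, if $\widetilde\sigma>\widetilde\varepsilon$ then $\det D^2 P_{\varepsilon,\widetilde\sigma}<c$ strictly, and a sequence of near-touching points is blown up to produce an interior contact point of a limit $\bar u$ with $P_{\varepsilon,\widetilde\sigma}$, yielding the strict Hessian contradiction $c=\det D^2\bar u\le\det D^2 P_{\varepsilon,\widetilde\sigma}<c$. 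Step 3 takes $\varepsilon\to0$. The blow-up-at-the-touching-sequence device is the ingredient you need and don't have: it replaces the missing ``dominance at infinity'' of a barrier by extracting an interior touching point in the limit. Without this (or an equivalent device), the maximum-principle comparison you propose has a genuine gap.
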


\begin{proof}
Since $u$ has quadratic growth at infinity, we know $u\in C^\infty(V)$.
\par
{\it Step 1.} There exist two  constants $\varepsilon,\sigma>0$ such that
\begin{equation*}
u(x)\le \left(\frac 12 +\varepsilon\right)|x|^2+(\sqrt{1-c}+\sigma)x_1x_2=P_{\varepsilon,\sigma},\quad \text{in}\quad V.
\end{equation*}
In fact, for any $\varepsilon>0$, there exists a $\theta_\varepsilon>0$ such that
\begin{equation}\label{0321}
u(x)\le \left(\frac 12+\varepsilon\right)|x|^2,\quad \text{in}\quad  \{x|0<\min(x_1,x_2)<|x|\sin\theta_\varepsilon\}.
\end{equation}
Suppose not, then there exists a sequence of points $\{x^m\}_{m=1}^{+\infty}$ and a positive constant $\varepsilon_0$ such that
\begin{equation*}
u(x^m)\ge \left(\frac 12 +\varepsilon_0\right) |x^m|^2, \quad d\left(\frac{x^m}{|x^m|},\partial V\right)\rightarrow 0,\quad m\rightarrow +\infty.
\end{equation*}
Let
\begin{equation*}
u_m(x)=\frac{u(\lambda_m x)}{\lambda_m^2},\quad \lambda_m=|x^m|, \quad y_m=\frac{x^m}{|x^m|}\in \mathbb S^{n-1}\cap V.
\end{equation*}
Then, up to a subsequence, $y_m\rightarrow \bar y\in \partial V\cap \mathbb S^{n-1}$.  Let $\overline H$ be defined as follows:
\begin{equation*}
\overline{H}(x)=\sup\{\ell(x)| \ell(y)\le u(y),\	\forall  y\in \partial V\}=\frac{1}{2}|x|^2+x_1x_2.
\end{equation*}
Let $\underline H$ be defined as follows:
\begin{equation*}
\begin{split}
& \det D^2 \underline H=1,\quad \text{in}\quad V\cap B_{R_0},\\
& \underline H=\psi,\quad \text{on}\quad \partial(V\cap B_{R_0})
\end{split}
\end{equation*}
where
\begin{equation*}
\psi(x)=\sup\{\ell_p(x)|\ell_p(x)=\frac 12 |p|^2+p\cdot (x-p),\	p\in B_2(0)\}
\end{equation*}
and $R_0$ is chosen such that $\psi(x)\le  A|x|^2$ on $\partial B_{R_0}$.  Then, it follows from maximum principle that
$$\underline H\le u_m\le \overline H,\quad \text{in}\quad V\cap B_2(0).$$
Since $\underline H,\overline H,u_m$ have the same boundary value $\frac 12 |x|^2$ on $\partial V\cap B_2(0)$, this implies $y_m\rightarrow \bar y\in \partial V$ is impossible and proves \eqref{0321}.
\par Then, for any $x\in  \{x|\min(x_1,x_2)>|x|\sin\theta_\varepsilon\}$, for $\varepsilon$ small, we can choose $\sigma=1-\sqrt{1-c}-\varepsilon>0$
such that
\begin{equation*}
\begin{split}
 \left(\frac 12 +\varepsilon\right)|x|^2+(\sqrt{1-c}+\sigma)x_1x_2
 \ge  \frac 12 (x_1+x_2)^2\ge u(x).
  \end{split}
\end{equation*}
This ends the proof of Step 1.
\par {\it Step 2.}
For fixed small constant $\varepsilon\in (0,\frac{1-\sqrt{1-c}}{4}]$, let $\widetilde \varepsilon>0$ be the unique solution of
\begin{equation*}
(1+2\varepsilon)^{n-2}[(1+2\varepsilon)^2-(\sqrt{1-c}+\widetilde \varepsilon)^2]=c.
\end{equation*}
Claim:
\begin{equation}\label{clm0409}
u(x)\le \left(\frac 12+\varepsilon\right)|x|^2+(\sqrt{1-c}+\widetilde \varepsilon)x_1x_2,\quad \text{in} \quad V.
\end{equation}
Define
\begin{equation*}
\widetilde \sigma=\inf_{\sigma}\{\sigma>0| u(x)\le P_{\varepsilon,\sigma},\quad \text{in} \quad V
\}
\end{equation*}
where $P_{\varepsilon,\sigma}$ is given by Step 1.
By the definition of $\sigma$ is Step 1, we know
\begin{equation*}
(1+2\varepsilon)^{n-2}[(1+2\varepsilon)^2-(\sqrt{1-c}+\sigma)^2]<c
\end{equation*}
for $\varepsilon>0$ small.  Hence,  Claim  \eqref{clm0409} follows from $\widetilde \varepsilon\ge \widetilde \sigma$.  Suppose not, i.e., $\widetilde \sigma>\widetilde \varepsilon$.  Then, by the definition of $\widetilde \varepsilon$, we know $\det D^2 P_{\varepsilon,\widetilde \sigma}<c$ and there exists a sequence of points $ \{x^m\}_{m=1}^{+\infty}\in V$ such that
\begin{equation*}
u(x^m)> P_{\varepsilon,\widetilde \sigma-\frac 1m}(x^m).
\end{equation*}
Recall the notations $u_m(x), \lambda_m,y_m$ in Step 1. Since $u_m\rightarrow \bar u$ locally uniformly in $\overline V$,   we know
\begin{equation*}
y_m=\frac{x^m}{|x^m|}\rightarrow \bar y\in V\cap \mathbb S^{n-1},\quad \bar u(\bar y)=P_{\varepsilon,\widetilde \sigma}(\bar y).
\end{equation*}
This implies
\begin{equation*}
c=\det D^2 \bar u(\bar y)\le \det D^2 P_{\varepsilon,\widetilde \sigma}<c
\end{equation*}
which is a contradiction.  This ends the proof of Claim.
\par Step 3. Since $\varepsilon>0$ can be choose arbitrarily small, we can take $\varepsilon\rightarrow 0$ to get the conclusion of present lemma.
\end{proof}

As a consequence, we have the following Liouville type result for $V=(\mathbb R_{+})^2\times\mathbb R^{n-2}$.
\begin{theorem}\label{thm-liou2}
Let $c=1$ and $u,V$ be as in Lemma \ref{lemma-liou1}. In addition, $u$ satisfies
\begin{equation*}
 u\geq \frac{|x|^2}{2} \quad\text{in } V.
\end{equation*}
 Then,
\begin{equation*}
u(x)=\frac{|x|^2}{2}.
\end{equation*}
\end{theorem}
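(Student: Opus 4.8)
The plan is to obtain Theorem~\ref{thm-liou2} as an immediate corollary of Lemma~\ref{lemma-liou1}. The conceptual point worth recording is that the cone $V=(\mathbb R_+)^2\times\mathbb R^{n-2}$ is the \emph{critical} case for this Liouville phenomenon: by Remark~\ref{emk-liouville1} one has $\lambda_1\big(V\cap\mathbb S^{n-1}\big)=2n$, so the exponent $\mu$ determined by $\mu(\mu+n-2)=\lambda_1$ is exactly $\mu=2$, and the soft barrier argument of Theorem~\ref{thm-liou1} (which needs $\mu>2$ in order to produce a homogeneous harmonic barrier of degree strictly between $2$ and $\mu$) is not available here. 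Lemma~\ref{lemma-liou1} is precisely the substitute for that missing barrier: it gives the sharp one-sided bound $u\le \tfrac{|x|^2}{2}+\sqrt{1-c}\,x_1x_2$, which degenerates to $u\le\tfrac{|x|^2}{2}$ exactly when $c=1$.

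First I would check that the hypotheses of Lemma~\ref{lemma-liou1} are met. We are given $c=1$, $V=(\mathbb R_+)^2\times\mathbb R^{n-2}$, and $u\in C(\overline V)$ convex with $\det D^2u=1$ in $V$ and $u=\tfrac{|x|^2}{2}$ on $\partial V$; moreover the additional hypothesis $u\ge\tfrac{|x|^2}{2}$ in $V$ yields in particular $u\ge A|x|^2$ with $A=\tfrac12\in(0,1)$. Hence all assumptions of Lemma~\ref{lemma-liou1} hold.

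Applying that lemma with $c=1$ gives $u(x)\le \tfrac{|x|^2}{2}+\sqrt{1-1}\,x_1x_2=\tfrac{|x|^2}{2}$ in $V$. Combining this with the assumed lower bound $u(x)\ge\tfrac{|x|^2}{2}$ in $V$ forces $u(x)=\tfrac{|x|^2}{2}$, which is the assertion. There is essentially no obstacle at this final step; the entire difficulty has been front-loaded into Lemma~\ref{lemma-liou1}, whose Step~2 performs the delicate sharpening of the crude barrier $P_{\varepsilon,\sigma}$ down to the optimal slope $\widetilde\varepsilon$ — the root of $(1+2\varepsilon)^{n-2}[(1+2\varepsilon)^2-(\sqrt{1-c}+\widetilde\varepsilon)^2]=c$ — by a blow-up/compactness argument using that $u_m\to\bar u$ locally uniformly, after which Step~3 lets $\varepsilon\to0$. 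Accordingly, the write-up of Theorem~\ref{thm-liou2} should be just the two-line deduction above, with the observation that $A=\tfrac12$ makes the hypothesis $u\ge A|x|^2$ automatic.
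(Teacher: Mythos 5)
Your proposal is correct and is exactly the paper's intended argument: the paper introduces Theorem~\ref{thm-liou2} with ``As a consequence'' of Lemma~\ref{lemma-liou1}, and the two-line deduction you give (take $A=\tfrac12$, apply the lemma with $c=1$ to get $u\le\tfrac{|x|^2}{2}$, and combine with the assumed lower bound) is that consequence. Your surrounding remark about the criticality of $\mu=2$ for $V=(\mathbb R_+)^2\times\mathbb R^{n-2}$ is accurate but not needed for the proof itself.
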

The following lemma is also important for global $C^2$ estimate.
\begin{lemma}\label{lemma-liou2}
Let $c\in (0,1)$ and $u,V$ be as in Lemma \ref{lemma-liou1}. In addition,
there exists an $\varepsilon_0>0$ such that
$$ u\geq \frac{|x|^2}{2}-\sqrt{1-c}x_1x_2 +\varepsilon_0 x_1x_2, \quad \text{in}\quad V.$$
 Then,
\begin{equation*}
u(x)= \frac{|x|^2}{2}+\sqrt{1-c}x_1x_2  \quad \text{in } V.
\end{equation*}
\end{lemma}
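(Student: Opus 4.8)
The plan is to prove the two inequalities $u\le Q_{\sqrt{1-c}}$ and $u\ge Q_{\sqrt{1-c}}$, where $Q_t(x):=\tfrac{|x|^2}{2}+t\,x_1x_2$; note that on $V=(\mathbb R_+)^2\times\mathbb R^{n-2}$ one has $\det D^2Q_t\equiv 1-t^2$, $Q_t=\tfrac{|x|^2}{2}$ on $\partial V$, and $Q_t$ is convex for $|t|<1$. The hypothesis $u\ge \tfrac{|x|^2}{2}-\sqrt{1-c}\,x_1x_2+\varepsilon_0x_1x_2$ implies $u\ge A|x|^2$ for some $A\in(0,1)$, so Lemma~\ref{lemma-liou1} applies and yields $u\le Q_{\sqrt{1-c}}$; hence it remains to prove $u\ge Q_{\sqrt{1-c}}$. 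Put $t^{*}:=\sup\{t: u\ge Q_t \text{ in }V\}$. The hypothesis forces $t^{*}\ge \varepsilon_0-\sqrt{1-c}>-\sqrt{1-c}$, the bound $u\le Q_{\sqrt{1-c}}$ forces $t^{*}\le\sqrt{1-c}$, and the defining condition is closed, so $u\ge Q_{t^{*}}$. Assume for contradiction that $t^{*}<\sqrt{1-c}$; then $t^{*}\in(-\sqrt{1-c},\sqrt{1-c})$, so $\det D^2Q_{t^{*}}=1-(t^{*})^2>c$, i.e.\ $Q_{t^{*}}$ is a strict supersolution which $u$ touches only asymptotically: by maximality $\inf_{V\setminus\partial V}\tfrac{u-Q_{t^{*}}}{x_1x_2}=0$, so one can pick $x^m\in V\setminus\partial V$ with $r_m:=\tfrac{u(x^m)-Q_{t^{*}}(x^m)}{x^m_1x^m_2}\to 0$.

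The main difficulty is that $\{x^m\}$ may run off toward the edge $E:=\{0\}\times\{0\}\times\mathbb R^{n-2}$ of $V$, where $u$ is only $C^{1,1}$ and the ratio above degenerates. I handle this by an \emph{edge-adapted rescaling}. Let $\rho_m:=\operatorname{dist}(x^m,E)=\big((x^m_1)^2+(x^m_2)^2\big)^{1/2}$ and let $\hat x^m:=(0,0,(x^m)'')\in E$ be the foot of the perpendicular from $x^m$ to $E$. All of $\tfrac{|x|^2}{2}$, $Q_{t^{*}}$, $Q_{\sqrt{1-c}}$ share the same value and gradient $(0,0,(x^m)'')$ at $\hat x^m$, so, being sandwiched between $Q_{t^{*}}$ and $Q_{\sqrt{1-c}}$, $u$ is differentiable at $\hat x^m$ with $\nabla u(\hat x^m)=(0,0,(x^m)'')$, and we may set $w_m(x):=\rho_m^{-2}\big(u(\hat x^m+\rho_m x)-u(\hat x^m)-\nabla u(\hat x^m)\cdot\rho_m x\big)$. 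Using the Taylor identity $Q(\hat x^m+y)=Q(\hat x^m)+\nabla Q(\hat x^m)\cdot y+Q(y)$ valid for any homogeneous quadratic $Q$, one checks directly that $w_m$ is a convex Alexandrov solution of $\det D^2w_m=c$ in $V$ with $w_m=\tfrac{|x|^2}{2}$ on $\partial V$, that $Q_{t^{*}}\le w_m\le Q_{\sqrt{1-c}}$ in $V$, and that the rescaled worst point $\zeta_m:=\rho_m^{-1}(x^m-\hat x^m)=(x^m_1/\rho_m,\ x^m_2/\rho_m,\ 0)$ is a \emph{unit} vector in $(\mathbb R_+)^2\times\{0\}$ satisfying $\tfrac{w_m(\zeta_m)-Q_{t^{*}}(\zeta_m)}{\zeta_{m,1}\zeta_{m,2}}=r_m\to 0$. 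By the uniform sandwich, along a subsequence $w_m\to w_\infty$ locally uniformly on $\overline V$, where $w_\infty$ is again a convex solution of $\det D^2w_\infty=c$ in $V$ with $w_\infty=\tfrac{|x|^2}{2}$ on $\partial V$ and $Q_{t^{*}}\le w_\infty\le Q_{\sqrt{1-c}}$; also $\zeta_m\to\zeta_\infty=(\cos\psi,\sin\psi,0)$ with $\psi\in[0,\tfrac\pi2]$. The gain is that $|\zeta_\infty|=1$, so $\zeta_\infty\notin E$: it lies either in the interior $V^\circ$ or on one of the two flat faces away from the edge.

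Two cases remain. If $\psi\in(0,\tfrac\pi2)$ then $\zeta_\infty\in V^\circ$, and since $w_m(\zeta_m)-Q_{t^{*}}(\zeta_m)=r_m\zeta_{m,1}\zeta_{m,2}\to 0$, passing to the limit gives $w_\infty(\zeta_\infty)=Q_{t^{*}}(\zeta_\infty)$. As $w_\infty$ has quadratic growth it is strictly convex, hence $C^2$, in $V^\circ$ (exclude segments in level sets as in the proof of Lemma~\ref{lem-C1}, then apply \cite{Caffarelli1990-2}); since $w_\infty-Q_{t^{*}}\ge 0$ attains an interior minimum $0$ at $\zeta_\infty$, $D^2w_\infty(\zeta_\infty)\ge D^2Q_{t^{*}}$ with both sides positive semidefinite, so $c=\det D^2w_\infty(\zeta_\infty)\ge\det D^2Q_{t^{*}}=1-(t^{*})^2>c$, a contradiction. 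If $\psi\in\{0,\tfrac\pi2\}$, say $\zeta_\infty=e_1\in\{x_2=0\}$, a \emph{flat} boundary point of $V$ away from $E$: by the boundary estimates of \cite{Savin2013} the convergence $w_m\to w_\infty$ is $C^2$ up to $\{x_2=0\}$ near $e_1$, and since $\tfrac{w_m(\zeta_m)-Q_{t^{*}}(\zeta_m)}{\zeta_{m,2}}=r_m\zeta_{m,1}\to 0$ while $w_m-Q_{t^{*}}=0$ on $\{x_2=0\}$, the mean value theorem in $x_2$ gives $\partial_{x_2}(w_\infty-Q_{t^{*}})(e_1)=0$. On the other hand $h:=w_\infty-Q_{t^{*}}\ge 0$ vanishes on $\partial V$ and, differentiating $\det$ along $s\mapsto(1-s)Q_{t^{*}}+s\,w_\infty$, satisfies $a^{ij}\partial_{ij}h=\det D^2w_\infty-\det D^2Q_{t^{*}}=c-(1-(t^{*})^2)<0$, where $a=\int_0^1\operatorname{cof}\big(D^2((1-s)Q_{t^{*}}+s\,w_\infty)\big)\,ds$ is positive semidefinite and uniformly elliptic near $e_1$ (the intermediate Hessians have determinant $\ge c>0$ by Minkowski's inequality and are bounded up to $\{x_2=0\}$). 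Since $h\not\equiv 0$ (otherwise $\det D^2w_\infty=1-(t^{*})^2\ne c$), the strong maximum principle gives $h>0$ in $V^\circ$, so Hopf's boundary lemma at $e_1$ forces $\partial_{x_2}h(e_1)>0$, a contradiction. Hence $t^{*}=\sqrt{1-c}$, i.e.\ $u\ge Q_{\sqrt{1-c}}$, and together with $u\le Q_{\sqrt{1-c}}$ this yields $u(x)=\tfrac{|x|^2}{2}+\sqrt{1-c}\,x_1x_2$ in $V$. The crux, and the step I expect to be most delicate, is the edge case: choosing the rescaling center $\hat x^m$ and the scale $\rho_m=\operatorname{dist}(x^m,E)$ is precisely what forces the rescaled contact direction $\zeta_\infty$ off the singular edge and reduces everything to the interior comparison or the flat-boundary Hopf argument.
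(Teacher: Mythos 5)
Your proof is correct, and the key step is carried out by a genuinely different mechanism than the paper's. Both arguments share the same skeleton: the upper bound $u\le\frac{|x|^2}{2}+\sqrt{1-c}\,x_1x_2$ comes from Lemma \ref{lemma-liou1} (the hypothesis does give $u\ge A|x|^2$), and the work is the matching lower bound, obtained by sliding a quadratic up until it ``touches'' $u$ from below and extracting a contradiction from $\det D^2 Q_{t^*}>c$ at the contact. The paper avoids boundary contact altogether: following Steps 1--2 of Lemma \ref{lemma-liou1}, it perturbs the comparison quadratic by $-\varepsilon|x|^2$, uses the Step 1 barrier to make the perturbed inequality strict in a conical neighbourhood of $\partial V$, so that the touching point (after rescaling by $|x^m|$) lands in a fixed compact subset of the open spherical cross-section where only interior regularity is needed, and then sends $\varepsilon\to 0$. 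You instead keep the exact quadratic $Q_{t^*}$ and confront the boundary: the edge $\{x_1=x_2=0\}$ is neutralized by rescaling at the scale $\rho_m=\operatorname{dist}(x^m,E)$ (which is precisely what forces $|\zeta_\infty|=1$), interior contact is killed by the Hessian comparison, and flat-face contact is killed by Hopf's lemma for the linearized operator. Your route is more direct (no $\varepsilon\to0$ limit, and it isolates cleanly why the edge is harmless), but pays for this by invoking Savin's boundary $C^{2,\alpha}$ estimates uniformly in $m$ and Hopf's lemma, neither of which the paper needs for this lemma. Two details worth tightening: the strict convexity of $w_\infty$ in $V^\circ$ is best justified from the upper bound $w_\infty\le Q_{\sqrt{1-c}}$ (a supporting plane $\ell$ whose contact set met $\partial V$ at two points would force the uniformly convex quadratic $Q_{\sqrt{1-c}}-\ell\ge 0$ to vanish at two distinct points), not quite ``as in Lemma \ref{lem-C1}'', whose argument leans on the subsolution; and the $C^1$ convergence of $w_m$ up to the face $\{x_2=0\}$ near $e_1$ requires Savin's estimates with constants uniform in $m$, which your uniform sandwich does provide but which should be stated.
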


\begin{proof}

Similar as Step 1 in Lemma \ref{lemma-liou1}, for $\forall \varepsilon>0$ small, we have
\begin{equation*}
u(x)\ge \left(\frac 12-\varepsilon\right)|x|^2-(\sqrt{1-c}-\varepsilon-\varepsilon_0)x_1x_2
\end{equation*}
and
\begin{equation*}
(1-2\varepsilon)^{n-2}((1-2\varepsilon)^2-(\sqrt{1-c}-\varepsilon-\varepsilon_0)^2)>c.
\end{equation*}
Then, repeating the arguments of Step 2 in Lemma \ref{lemma-liou1}, we can get
\begin{equation*}
u(x)\ge \left(\frac 12-\varepsilon\right)|x|^2+(\sqrt{1-c}-\widetilde \varepsilon)x_1x_2
\end{equation*}
where $\widetilde \varepsilon$ is the small root of
\begin{equation*}
(1-2\varepsilon)^{n-2}((1-2\varepsilon)^2-(\sqrt{1-c}-\widetilde \varepsilon)^2)=c.
\end{equation*}
Taking $\varepsilon\rightarrow 0^+$, we finish the proof of Lemma \ref{lemma-liou2}.

\end{proof}

Theorem \ref{thm-liou1} and Lemma \ref{lemma-liou2} is crucial in the proof of Theorem \ref{mainthm1}. However, the existence of $C^2(\overline{\Omega})$ sub-solution $u$ is subtle for polytope domain $\Omega$. In the following,  we give two lemmas which enable us to construct $C^2(\overline\Omega)$ sub-solutions in polytopes for $n=2,3$.
\begin{lemma}\label{lemma-liou3}
Let $c\in (0,1)$ and $u,V$ be as in Lemma \ref{lemma-liou1}. In addition, there holds
\begin{equation}\label{eq-liou3-1}
 u\geq \frac{c^{\frac 1n}}{2}|x|^2, \quad \text{in}\quad V.
 \end{equation}
 Then,
\begin{equation*}
u(x)= \frac{|x|^2}{2}+\sqrt{1-c}x_1x_2  \quad \text{in } V.
\end{equation*}
\end{lemma}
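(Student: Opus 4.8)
The plan is to deduce the statement from Lemma~\ref{lemma-liou2} once we know $u\ge\tfrac{|x|^2}{2}$ in $V$, and to obtain this last inequality by a blow-down argument. Since $\tfrac{c^{1/n}}{2}\in(0,1)$, Lemma~\ref{lemma-liou1} gives $u\le P:=\tfrac{|x|^2}{2}+\sqrt{1-c}\,x_1x_2$ in $V$; together with the hypothesis, $u$ is pinched between the two $2$-homogeneous solutions $R:=\tfrac{c^{1/n}}{2}|x|^2$ and $P$ (note $\det D^2R=\det D^2P=c$):
\[
R\le u\le P\qquad\text{in }\overline V .
\]
For $\lambda>0$ set $u_\lambda(x):=\lambda^{-2}u(\lambda x)$; each $u_\lambda$ is convex, solves $\det D^2u_\lambda=c$ with $u_\lambda=\tfrac{|x|^2}{2}$ on $\partial V$, and obeys $R\le u_\lambda\le P$. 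Comparing $u_\lambda$ from below with the tangent planes of $\tfrac{|x|^2}{2}$ at points of $\partial V$ and from above with $P$ makes $\{u_\lambda\}$ equicontinuous up to $\partial V$, so along a subsequence $\lambda_k\to\infty$ one has $u_{\lambda_k}\to u_\infty$ locally uniformly on $\overline V$, where $u_\infty$ is convex, solves $\det D^2u_\infty=c$ in $V$, equals $\tfrac{|x|^2}{2}$ on $\partial V$, satisfies $R\le u_\infty\le P$, and — as a blow-down limit squeezed between the $2$-homogeneous barriers $R$ and $P$ — is $2$-homogeneous (standard; e.g.\ the upper and lower relaxed limits are $2$-homogeneous and agree by comparison for the equation induced on the bounded cross-section $\Sigma=\mathbb S^{n-1}\cap V$). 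As in Lemma~\ref{lemma-liou1}, the quadratic growth makes $u_\infty$ strictly convex, hence smooth, in $V$.

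The heart of the matter is the claim $u_\infty\ge\tfrac{|x|^2}{2}$. Write $g:=u_\infty|_{\Sigma}$, so $g\ge\tfrac{c^{1/n}}{2}$ on $\overline\Sigma$ and $g\equiv\tfrac12$ on $\partial\Sigma$. Put $m:=\min_{\overline\Sigma}g$ and suppose $m<\tfrac12$; then $m$ is attained at an interior point $\omega^\ast\in\Sigma$, and by $2$-homogeneity $u_\infty\ge m|x|^2$ in $V$ with equality all along the ray $\mathbb R_+\omega^\ast\subset V$. At an interior point of this ray the nonnegative function $u_\infty-m|x|^2$ attains its minimum, whence $D^2u_\infty\ge 2mI$ there and $c=\det D^2u_\infty\ge(2m)^n$, i.e.\ $m\le\tfrac{c^{1/n}}{2}$; combined with $m\ge\tfrac{c^{1/n}}{2}$ this forces $m=\tfrac{c^{1/n}}{2}$, so $\phi:=u_\infty-R\ge0$ in $\overline V$ and $\phi$ vanishes at an interior point of $V$. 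Since $\det D^2u_\infty\equiv\det D^2R\equiv c$, $\phi$ solves the uniformly elliptic linear equation $a^{ij}\partial_{ij}\phi=0$ in $V$ with $a^{ij}=\int_0^1\operatorname{cof}\!\big((1-t)c^{1/n}I+tD^2u_\infty\big)^{ij}\,dt>0$, so the strong maximum principle gives $\phi\equiv0$, i.e.\ $u_\infty\equiv R$, contradicting $u_\infty=\tfrac{|x|^2}{2}\ne R$ on $\partial V$. Hence $m\ge\tfrac12$ and $u_\infty\ge\tfrac{|x|^2}{2}$.

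Since every subsequential blow-down limit satisfies this, $\liminf_{x\in V,\,|x|\to\infty}u(x)/|x|^2\ge\tfrac12$. Fix $s\in(c^{1/n},1)$: there is $\rho_s$ with $u>\tfrac s2|x|^2$ on $V\cap\{|x|\ge\rho_s\}$, while $u=\tfrac{|x|^2}{2}\ge\tfrac s2|x|^2$ on $\partial V$; since $\det D^2u=c<s^n=\det D^2\big(\tfrac s2|x|^2\big)$, the Monge-Amp\`ere comparison principle on the \emph{bounded} domain $V\cap B_{\rho_s}$ gives $u\ge\tfrac s2|x|^2$ there, hence in all of $V$. Letting $s\uparrow1$ yields $u\ge\tfrac{|x|^2}{2}$ in $V$, so a fortiori $u\ge\tfrac{|x|^2}{2}-\sqrt{1-c}\,x_1x_2+\varepsilon_0x_1x_2$ with $\varepsilon_0:=\tfrac12\sqrt{1-c}>0$. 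Lemma~\ref{lemma-liou2} now applies and gives $u(x)=\tfrac{|x|^2}{2}+\sqrt{1-c}\,x_1x_2$ in $V$, as desired.

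I expect the main obstacle to be exactly the inequality $u_\infty\ge\tfrac{|x|^2}{2}$, and, more structurally, the fact that one is forced to pass to a blow-down at all: the cone $(\mathbb R_+)^2\times\mathbb R^{n-2}$ is unbounded and lies precisely at the critical exponent $\mu=2$, so $\tfrac{|x|^2}{2}$ cannot serve directly as a supersolution barrier for $u$ — the comparison principle leaks at infinity, and exactly at quadratic order. The $2$-homogeneous blow-down moves the problem to the bounded cross-section $\Sigma$, where the minimum of $g=u_\infty|_{\Sigma}$ can be analysed; the two delicate points are justifying the homogeneity of the blow-down limit and running the Hessian-comparison and strong-maximum-principle argument at the minimal ray.
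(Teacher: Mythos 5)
Your proposal is structurally different from the paper's proof: the paper never blows down, but instead runs an iterative comparison bootstrap, repeatedly improving the lower bound $u\geq\tfrac{A}{2}|x|^2$ to $u\geq(\tfrac{A}{2}+\varepsilon)|x|^2$ (via barriers of the form $(\tfrac A2+\varepsilon)|x|^2+\tilde\varepsilon x_1x_2$, as in Steps 1--2 of Lemma~\ref{lemma-liou1}), continuing until $A$ reaches $1$. That approach sidesteps the blow-down entirely. Your route is prettier in spirit — squeeze, blow down, analyse the minimum ray on $\Sigma$ with a Hessian bound and the strong maximum principle — and the second half of your argument (once homogeneity is in hand) is correct, as is the transfer from $u_\infty\geq\tfrac{|x|^2}{2}$ back to $u\geq\tfrac{|x|^2}{2}$ via the bounded-domain comparison with $\tfrac s2|x|^2$.

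However, there is a genuine gap at the step you yourself flag as delicate: the assertion that the blow-down limit $u_\infty$ is $2$-homogeneous. The parenthetical justification — that the upper and lower relaxed limits $\overline u,\underline u$ are $2$-homogeneous and ``agree by comparison for the equation induced on the bounded cross-section $\Sigma$'' — does not hold. Indeed, the cone problem in $V=(\mathbb R_+)^2\times\mathbb R^{n-2}$ admits at least two distinct $2$-homogeneous convex solutions with the same boundary data $\tfrac{|x|^2}{2}$ on $\partial V$, namely $\tfrac{|x|^2}{2}+\sqrt{1-c}\,x_1x_2$ and $\tfrac{|x|^2}{2}-\sqrt{1-c}\,x_1x_2$; since each is simultaneously a sub- and a supersolution, any comparison principle on $\Sigma$ of the type you invoke would force them to coincide, which is false. (This non-uniqueness of $2$-homogeneous cone solutions is precisely what makes the lemma non-trivial; Remark~3.3 of the paper already records a related non-uniqueness phenomenon.) The lower bound $u\geq R$ does pick out the correct solution, as you observe, but turning that observation into a uniqueness statement for the blow-down is essentially the content of the lemma itself, so one cannot appeal to it. Being ``squeezed between two $2$-homogeneous barriers'' likewise does not make a limit homogeneous. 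Absent a monotonicity formula or some other mechanism forcing uniqueness of the blow-down, your proof as written does not go through; the paper's iterative bootstrap avoids this obstruction entirely.
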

\begin{proof}
Suppose  $u(x)\ge \frac A2 |x|^2$ for some positive constant $A$ close enough to $c^{\frac 1n}$.
\par Step 1.  We claim that
there exist small $\varepsilon,\sigma>0$ such that
\begin{equation}\label{eq-del-1}
u(x)\ge \left(\frac A2+\varepsilon\right)|x|^2-\sigma x_1x_2,\quad \text{in}\quad V.
\end{equation}
Similar as Step 1 in Lemma \ref{lemma-liou1},  $\forall \varepsilon\in (0,\frac 12-\frac A2)=(0,\varepsilon_0)$,
there exists $\theta_{\varepsilon_0}>0$ such that
\begin{equation}\label{0321}
u(x)\ge \left(\frac A2+\varepsilon\right)|x|^2,\quad \text{in}\quad  \{x|0<\min(x_1,x_2)<|x|\sin\theta_{\varepsilon_0}\}.
\end{equation}
 Then,  one has
\begin{equation*}
\varepsilon |x|^2-\sigma x_1x_2\le 0,\quad \text{in}  \quad \{x|\min(x_1,x_2)\ge |x|\sin\theta_{\varepsilon_0}\},
\end{equation*}
provided that
\begin{equation}\label{0412-1}
\varepsilon \le \sigma \sin^2\theta_{\varepsilon_0}
\end{equation}
  This implies \eqref{eq-del-1} holds. We further choose $\varepsilon,\sigma$ small such that
\begin{equation*}
(A+2\varepsilon)^{n-2}((A+2\varepsilon)^2-\sigma^2)>c.
\end{equation*}
The above inequality holds if we choose
\begin{equation}\label{0412-2}
\sigma^2\le A\varepsilon,\quad \varepsilon\ge \frac{c-A^n}{(2n-1)A^{n-1}}.
\end{equation}
Combining \eqref{0412-1} and \eqref{0412-2},  we can just choose
\begin{equation*}
\varepsilon=\min(A\sin^4\theta_{\varepsilon_0},\varepsilon_0)\ge\frac{c-A^n}{(2n-1)A^{n-1}},\quad \sigma=\sqrt{A\varepsilon}.
\end{equation*}
By the assumption, the existence of $\varepsilon,\sigma$ satisfying the above inequality is guaranteed.
\par Step 2.  Following the Step 2 of the proof of Lemma \ref{lemma-liou1}, we finally obtain that
\begin{equation*}
u(x)\ge  \left(A/2+\varepsilon\right)|x|^2+\widetilde \varepsilon x_1x_2,\quad \text{in}\quad V
\end{equation*}
where $\widetilde \varepsilon $ is the positive root of
\begin{equation*}
(A+2\varepsilon)^{n-2}((A+2\varepsilon)^2-\widetilde \varepsilon^2)=c.
\end{equation*}
\par Step 3. Replace the constant $A$ by $A+\varepsilon$ and repeat the above arguments in Step 1-2.
Set
\begin{equation*}
A+2\widehat \varepsilon=\sup_{\varepsilon\in(0,\frac{1-A}{2}]}\{A+2\varepsilon| u(x)\ge \left(A/2+\varepsilon\right)|x|^2+\widetilde \varepsilon x_1x_2\}.
\end{equation*}
Then  $A+2\widehat \varepsilon=1$. Otherwise, if $A+2\widehat \varepsilon<1$, We continue to update the constant $A$ by $A+2\widehat \varepsilon$ and repeat the above arguments in Step 1-2 to get a contradiction. This finish the proof of present lemma.

\end{proof}
\begin{remark}\label{remark-liou-3:label}
Following the proof of Lemma \ref{lemma-liou3}, we know the condition \eqref{eq-liou3-1} can also be replaced by the following slightly weaker condition:
\begin{equation*}
u(x)\ge \frac{c^{\frac 1n}-\sigma}{2}|x|^2
\end{equation*}
for some $\sigma>0$ depending only on $n,c,V$.
\end{remark}
We also have the following Liouville type result which is similar to Theorem \ref{thm-liou1}.
\begin{lemma}\label{lem-liou-perturb}
Let $V\subset \mathbb R^n$ be a convex cone which is bounded by a finite number of planes intersecting at the origin with $\theta(\frac{|x|^2}{2},0,V)<\frac{\pi}{2}$.
Suppose that $u\in  C(\overline{V})$ solves \eqref{liou-1}.
Then, there exists a small constant $\delta>0$ such that
\begin{equation*}
u(x)\ge (1/2-\delta)|x|^2,\quad\text{in}\quad V
\end{equation*}
implies $u=\frac 12|x|^2$.
\end{lemma}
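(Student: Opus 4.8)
The plan is to dispose of two easy reductions and then treat the genuine content --- a matching lower bound --- by a blow‑up argument, the spectral gap coming from the hypothesis $\theta<\pi/2$.

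\textbf{Reductions.} Since every dihedral angle of $V$ is $<\pi/2$, Remark \ref{emk-liouville1} gives $\lambda_1(\Sigma)=\mu(\mu+n-2)$ with $\mu>2$, where $\Sigma=V\cap\mathbb S^{n-1}$; and since $V$ is a convex cone which is not a half space, Remark \ref{rem-liou-growth} gives the quadratic bound $u(x)\le C|x|^2$, so the asymptotic condition \eqref{liou-growth} holds automatically for any $\mu'\in(2,\mu)$. I would then observe that the upper‑barrier part of the proof of Theorem \ref{thm-liou1} (comparison with $\tfrac{|x|^2}{2}+\varepsilon|x|^{\mu_0}\phi_{\Sigma_0}$, $\mu_0\in(\mu',\mu)$) uses only convexity, the equation \eqref{liou-1}, and the growth condition --- not the sub‑solution condition --- and hence already yields $u\le\tfrac{|x|^2}{2}$ in $V$. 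It therefore suffices to produce $\delta>0$ so that $u\ge(\tfrac12-\delta)|x|^2$ forces $u\ge\tfrac{|x|^2}{2}$; together with the previous inequality this gives $u=\tfrac{|x|^2}{2}$.

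\textbf{Blow‑up.} Suppose no such $\delta$ exists. Then there are convex solutions $u_k$ of \eqref{liou-1} with $u_k\ge(\tfrac12-\tfrac1k)|x|^2$ and $u_k\not\equiv\tfrac{|x|^2}{2}$, and by the reduction $u_k\le\tfrac{|x|^2}{2}$. Put $\varepsilon_k:=\sup_V\bigl(\tfrac{|x|^2}{2}-u_k\bigr)/|x|^2\in(0,\tfrac1k]$; after a dilation $u_k\mapsto t_k^{-2}u_k(t_k\cdot)$ I may assume this supremum is comparably attained at scale one, i.e. $\tfrac{|x_k|^2}{2}-u_k(x_k)\ge\tfrac12\varepsilon_k$ for some $x_k$ with $|x_k|=1$. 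The normalized deficits $w_k:=\bigl(\tfrac{|x|^2}{2}-u_k\bigr)/\varepsilon_k$ satisfy $0\le w_k\le|x|^2$, $w_k=0$ on $\partial V$, $w_k(x_k)\ge\tfrac12$, and, differentiating $\det D^2u_k=1=\det D^2(\tfrac{|x|^2}{2})$ along the segment from $I$ to $D^2u_k$,
\[
a_k^{ij}\,\partial_{ij}w_k=0,\qquad a_k^{ij}(x)=\Bigl[\textstyle\int_0^1\mathrm{cof}\bigl((1-s)I+sD^2u_k(x)\bigr)\,ds\Bigr]^{ij}\succ 0 .
\]
The cone analogue of Lemma \ref{lem-C11} (valid because $\tfrac{|x|^2}{2}$ is a sub‑solution and $u_k$ is squeezed between $(\tfrac12-\tfrac1k)|x|^2$ and $\tfrac{|x|^2}{2}$) gives $|D^2u_k|\le C(V,n)$, so the $a_k$ are uniformly elliptic with $k$‑independent constants. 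Moreover $u_k\to\tfrac{|x|^2}{2}$ in $C^0_{\mathrm{loc}}(\overline V)$, so the interior estimates of \cite{Caffarelli1990-2} and the boundary estimates of \cite{Savin2013} at the relative interiors of the facets give $D^2u_k\to I$, hence $a_k\to I$, locally uniformly on $\overline V\setminus\Gamma_{n-2}(V)$.

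\textbf{Limit and contradiction.} By De Giorgi--Nash--Moser the $w_k$ are locally uniformly H\"older up to $\partial V$, so along a subsequence $w_k\to w_\infty\ge0$ locally uniformly, with $w_\infty\le|x|^2$, $w_\infty=0$ on $\partial V$, $w_\infty(x_\infty)\ge\tfrac12>0$ at $x_\infty=\lim x_k\in\mathrm{int}\,V$, and $w_\infty$ solving $a_\infty^{ij}\partial_{ij}w_\infty=0$ for an $H$‑limit operator with $a_\infty=I$ on $V\setminus\Gamma_{n-2}(V)$. Granting that in fact $a_\infty=I$ on all of $V\setminus\{0\}$, $w_\infty$ is harmonic in $V\setminus\{0\}$ and bounded near $0$, hence harmonic in $V$; being nonnegative, vanishing on $\partial V$, and of at most quadratic growth, while the minimal growth exponent of positive harmonic functions on $V$ vanishing on $\partial V$ is $\mu>2$ (realized by $|x|^{\mu}\phi_\Sigma$), a Phragm\'en--Lindel\"of comparison in the cone forces $w_\infty\equiv0$, contradicting $w_\infty(x_\infty)>0$. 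The main obstacle is exactly the claim $a_\infty=I$ (equivalently $D^2u_k\to I$) up to the lower‑dimensional skeleton $\Gamma_{n-2}(V)\setminus\{0\}$: near a face $G$ with $\dim G\le n-2$ the tangent cone is (a simple cone of dimension $n-\dim G$ with all dihedral angles $<\pi/2$)$\,\times\,\mathbb R^{\dim G}$, and one must upgrade $C^0$‑convergence of $u_k$ to $C^2$‑convergence there; this is the $C^2$‑regularity near an ``acute'' corner that the rest of the paper is devoted to, and I would obtain it by induction on $n$, the base case $\dim G=n-2$ (a two‑dimensional wedge of angle $<\pi/2$ crossed with $\mathbb R^{n-2}$) being covered by Lemma \ref{lemma-liou3} and Remark \ref{remark-liou-3:label} after the affine normalization $A_\mu$. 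The scale normalization and the $H$‑convergence of the $a_k$ are routine.
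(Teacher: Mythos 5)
Your strategy --- blow up the deficit $w_k=(\tfrac{|x|^2}{2}-u_k)/\varepsilon_k$, linearize the Monge--Amp\`ere equation, and kill the limit by a Phragm\'en--Lindel\"of argument using the spectral gap $\lambda_1(\Sigma)>2n$ --- is conceptually attractive and genuinely different from the paper's proof. The paper does not linearize at all: after the affine normalization $A_\mu$ it establishes the starting barrier $u\ge P_{\varepsilon,\sigma}$ with $\det D^2P_{\varepsilon,\sigma}\ge c$ by a direct comparison, and then feeds this into the Step~2--Step~3 iteration of Lemma~\ref{lemma-liou3}, incrementing the coefficient of $|x|^2$ toward $\tfrac12$. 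That argument is entirely elementary (barriers plus iteration) and does not require any boundary regularity theory beyond what is already used in Lemma~\ref{lemma-liou1}.

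The gap you flag is, however, more serious than you suggest, and your proposed fix does not close it. To conclude that $w_\infty$ is harmonic in all of $V$ you need two things near $\Gamma_{n-2}(V)\setminus\{0\}$: (i) uniform ellipticity of $a_k$ there, i.e. a $k$-independent $C^{1,1}$ bound for $u_k$ up to the edge; and (ii) $a_k\to I$ there. Your appeal to ``the cone analogue of Lemma~\ref{lem-C11}'' for (i) is not available: that lemma's proof requires a genuine $C^2$ sub-solution with $\det D^2\underline u\ge f$, and neither of the two functions trapping $u_k$ qualifies --- $(\tfrac12-\tfrac1k)|x|^2$ has $\det D^2=(1-\tfrac2k)^n<1$, while $\tfrac{|x|^2}{2}$ lies above $u_k$, not below. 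The strict-convexity step and the $C^{2,\beta}$ estimates in Lemmas~\ref{lem-C1}--\ref{lem-C11} hinge on the sub-solution condition and cannot simply be ported. For (ii), Lemma~\ref{lemma-liou3} and Remark~\ref{remark-liou-3:label} give a rigidity statement for a \emph{single} blow-up limit; upgrading this to uniform $C^2$-convergence of the \emph{sequence} $u_k$ in a collar of $\Gamma_{n-2}(V)$ is precisely the kind of modulus-of-continuity estimate that Sections~4--5 and Theorem~\ref{thm-2+growth} provide, and those again presuppose a $C^2$ sub-solution with $\det D^2\underline u\ge f$. The same lack of uniform boundary regularity also undercuts the step where you conclude $x_\infty\in\mathrm{int}\,V$ (the De Giorgi--Nash--Moser H\"older bound on $w_k$ up to the edge requires uniform ellipticity of $a_k$ there). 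So the gap is not one of ``routine'' bookkeeping: to complete the blow-up route you would effectively have to redevelop the paper's boundary $C^{1,1}$ and $C^2$ estimates in a sub-solution-free setting, which is not easier than the barrier iteration the paper actually uses.
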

\begin{proof}
After an affine transformation, we assume
\begin{equation}
\begin{split}
&\det D^2 u=c,\quad c\in (0,1),\quad \text{in}\quad V\\
& u=\frac 12|x|^2+\sqrt{1-c}x_1x_2 \quad \text{on}\quad \partial V\\
& u(x)\ge (1/2-\delta)|x|^2+\sqrt{1-c}x_1x_2,\quad \text{in}\quad V
\end{split}
\end{equation}
and $\{x_1=0\}\cap \partial V$, $\{x_2=0\}\cap \partial V$ are two $(n-1)-$face of $V$. And  $\delta$ is small enough  such that $(1-2\delta)^n\ge c$.
\par First, we can choose two small constants $\varepsilon,\sigma$ such that
\begin{equation}\label{032101}
u(x)\ge (1/2-\delta+\varepsilon)|x|^2+(\sqrt{1-c}-\sigma)x_1x_2=P_{\varepsilon,\sigma},\quad x\in V.
\end{equation}
In fact, similar as in Step 1 of Lemma \ref{lemma-liou1}, there exists $\Sigma'\subset\subset \Sigma=V\cap \mathbb S^{n-1}$ such that
\begin{equation*}
u(x)\ge \left(\frac{1}{2}-\frac{\delta}{2}\right)|x|^2+\sqrt{1-c}x_1x_2,\quad \text{in}\quad V\backslash V'
\end{equation*}
where  $V'=t\Sigma',$ $t>0$.  In $V'$, we can just choose
\begin{equation*}
\varepsilon\le \min\{o_{\delta}(1)\sigma,\delta/2\}
\end{equation*}
to get \eqref{032101}.
Here, $o_\delta(1)$ is a constant such that $o_\delta(1)\rightarrow 0$ as $\delta\rightarrow 0$.
\par Second, we also need
\begin{equation*}
\begin{split}
\det D^2 P_{\varepsilon,\sigma}=&(1-2\delta+2\varepsilon)^{n-2}\left[(1-2\delta+2\varepsilon)^2-(\sqrt{1-c}-\sigma)^2\right]\\
\ge &[1-2(n-2)(\delta-\varepsilon)][c-4(\delta-\varepsilon)+1.5\sqrt{1-c}\sigma]\ge c.
\end{split}
\end{equation*}
It suffices to choose that
\begin{equation*}
\sigma\ge \frac{2(n-2)+4}{\sqrt{1-c}}(\delta-\varepsilon).
\end{equation*}
Hence, the following choice of $\varepsilon,\sigma$ are suitable
\begin{equation*}
\varepsilon=o_\delta(1) \theta\delta,\quad \sigma=\theta\frac{2(n-2)+4}{\sqrt{1-c}}\delta,
\end{equation*}
where $\theta>0$ is some small constant depending only on $1-c$.
Let $\widetilde \varepsilon$  be the small root of  $\det D^2P_{\varepsilon,\widetilde \varepsilon}=c$.
The rest of the proof follows from Lemma \ref{lemma-liou3}.
\end{proof}

\section{$C^{2}$ regularity up to $(n-2)$-face}
Consider the following Dirichlet problem:
\begin{equation}\label{C2-cd2-eq}
\begin{split}
&\det D^2 u=f \quad \text{in}\quad \Omega,\\
&u=\varphi \quad \text{on}\quad \partial\Omega\bigcap\{x|x_1x_2=0\}.
\end{split}
\end{equation}
We first give an interesting rigidity lemma.
\begin{lemma}\label{cd2-pC2-1}
Let $\Omega=(0,3)^{2}\times (-3,3)^{n-2}$ and $u\in C^{1,1}(\overline{\Omega})\bigcap C^{2}( \overline{\Omega}\setminus (0,0)\times [-3,3]^{n-2} )$ solves \eqref{C2-cd2-eq} with $f,\varphi$ satisfying \eqref{cond-f-varphi}. Suppose there exists a globally $C^2$, convex, sub-solution $\underline u\in C^2(\overline{\Omega})$ to \eqref{C2-cd2-eq}$($i.e. $\det D^2\underline u\ge f$ in $\Omega$; $\underline u=\varphi$ on $\partial\Omega\cap \{x_1x_2=0\}$; $u\ge \underline u$ on $\partial\Omega\cap \{x_1x_2>0\})$.
Assume
\begin{equation}\label{subu-expansion-1}
\underline{u}(x)= \frac{|x|^2}{2}+\underline u_{12}(0) x_1x_2+o(|x|^2),\quad \text{near}\quad 0.
\end{equation}
Then, either $u=\underline{u}$ or there holds
\begin{equation}\label{u-expansion-1}
u= \frac{|x|^2}{2}+\sqrt{1-f(0)}x_1x_2 +o(|x|^2),\quad \text{near}\quad 0.
\end{equation}
\end{lemma}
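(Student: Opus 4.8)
The plan is to perform a blow-up analysis at the origin and reduce the statement to the Liouville-type rigidity results of Section 3, namely Theorem \ref{thm-liou2} and Lemma \ref{lemma-liou2}. First I would normalize the situation: after subtracting the affine part of $u$ at $0$ (note $u$ is differentiable there by the $C^{1,1}$ bound of Lemma \ref{lem-C11}) and applying a linear transformation, we may assume $u(0)=|\nabla u(0)|=0$, $D^2 u(0)$ has been put into a standard form, and the tangent cone at $0$ is $V=(\mathbb R_+)^2\times\mathbb R^{n-2}$. The sub-solution hypothesis \eqref{subu-expansion-1}, together with convexity of $u$, forces $u(x)=O(|x|^2)$ and $u(x)\ge \underline u(x)\ge \tfrac12|x|^2+\underline u_{12}(0)x_1x_2+o(|x|^2)$ near $0$. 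Set $u_\lambda(x)=\lambda^{-2}u(\lambda x)$ and likewise $\underline u_\lambda$; by the uniform $C^{1,1}$ estimate these families are precompact, and by the strict-convexity arguments already used in Lemma \ref{lem-C1} and Lemma \ref{lem-C11} any limit $u_\infty$ is a convex Alexandrov solution on all of $V$ of $\det D^2 u_\infty=f(0)$ with boundary data $\tfrac{|x|^2}{2}$ on $\partial V$ (the Hölder continuity of $f$ and $C^{2,\beta}$ regularity of $\varphi$ guarantee the right-hand side and boundary data converge to their values at $0$), and with the sub-solution bound $u_\infty\ge \tfrac12|x|^2+\underline u_{12}(0)x_1x_2$ on $V$. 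Since $u$ has quadratic growth, $u_\infty$ does too, so $u_\infty\in C^\infty(V)$ and the hypotheses of the cone lemmas are met.

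Next I would split into the two cases $f(0)=1$ and $f(0)<1$. If $f(0)=1$: the sub-solution is itself a sub-solution of the same determinant equation with the same boundary data, so $\det D^2\underline u=f(0)=1$ forces (by the arithmetic–geometric inequality applied to $D^2\underline u(0)=I+\underline u_{12}(0)(e_1e_2^T+e_2e_1^T)$ in the normalized coordinates) that $\underline u_{12}(0)=0$, hence $u_\infty\ge\tfrac12|x|^2$ on $V$; Theorem \ref{thm-liou2} then yields $u_\infty=\tfrac12|x|^2$, which is exactly \eqref{u-expansion-1} with $\sqrt{1-f(0)}=0$. (If instead $\det D^2\underline u(0)<1$ one still gets $u_\infty\ge\tfrac12|x|^2-\sqrt{1-\det D^2\underline u(0)}\,x_1x_2$, but since the determinant is $1$ Lemma \ref{lemma-liou1} with $c=1$ pins $u_\infty=\tfrac12|x|^2$ anyway, and the case $u\equiv\underline u$ cannot arise unless $\det D^2\underline u\equiv f$.) If $f(0)<1$, write $c=f(0)$. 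By Lemma \ref{lemma-liou1}, $u_\infty\le \tfrac12|x|^2+\sqrt{1-c}\,x_1x_2$ on $V$. For the matching lower bound there are two possibilities: either $\underline u_{12}(0)>-\sqrt{1-c}$ strictly, in which case the blow-up obeys $u_\infty\ge \tfrac12|x|^2-\sqrt{1-c}\,x_1x_2+\varepsilon_0 x_1x_2$ for $\varepsilon_0=\underline u_{12}(0)+\sqrt{1-c}>0$ and Lemma \ref{lemma-liou2} gives $u_\infty=\tfrac12|x|^2+\sqrt{1-c}\,x_1x_2$; or $\underline u_{12}(0)=-\sqrt{1-c}$, forcing $\det D^2\underline u(0)=1-\underline u_{12}(0)^2=c=f(0)$ — but then the AM–GM/strict-convexity comparison shows $u$ and $\underline u$ have the same quadratic part at $0$ and, running the comparison argument of Lemma \ref{lem-C1} along segments, one concludes $u\equiv\underline u$ (the alternative in the statement). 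In either branch the blow-up limit is uniquely determined, independent of the subsequence.

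Finally, uniqueness of the blow-up limit $u_\infty$ upgrades to the desired second-order expansion: $\lambda^{-2}u(\lambda x)\to u_\infty(x)$ locally uniformly with $u_\infty$ quadratic means precisely $u(x)=u_\infty(x)+o(|x|^2)$ as $x\to 0$, which after undoing the normalization becomes \eqref{u-expansion-1}. The main obstacle I anticipate is the case analysis at $\underline u_{12}(0)=-\sqrt{1-c}$, i.e.\ showing cleanly that equality of the quadratic parts of $u$ and $\underline u$ at the edge, combined with $u\ge\underline u$ and both solving the respective (in)equalities, forces $u\equiv\underline u$ globally on $\Omega$ rather than just to second order — this requires a comparison/strong-maximum-principle argument for the linearized operator, using that $\det D^2 u=f\le\det D^2\underline u$ and that the contact set of $u-\underline u$ at an interior or edge point propagates. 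A secondary technical point is justifying that the blow-up limits genuinely solve the Dirichlet problem up to $\partial V$ (not merely in the interior), which follows from the barrier construction in Lemma \ref{lemma-liou1}, Step 1, adapted to the present boundary data.
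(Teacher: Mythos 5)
Your overall framework — blow-up at the origin combined with the Liouville results of Section~3 — matches the paper exactly, and your treatment of the cases $f(0)=1$ and $f(0)<1$ with $\underline u_{12}(0)>-\sqrt{1-f(0)}$ is essentially the same as the paper's Cases 1 and 2 (Theorem~\ref{thm-liou2} and Lemma~\ref{lemma-liou2} respectively). However, your resolution of the critical case $\underline u_{12}(0)=-\sqrt{1-f(0)}$ is wrong, and this is where the real content of the lemma lies (the paper explicitly calls it ``the most interesting case'').

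You claim that $\underline u_{12}(0)=-\sqrt{1-f(0)}$ forces $\det D^2\underline u(0)=f(0)$, hence equality of the quadratic parts of $u$ and $\underline u$ at $0$, hence $u\equiv\underline u$. That inference does not hold: equality of the Hessians at the single point $0$ does not propagate to global equality, and the lemma is stated precisely to cover the case where $u>\underline u$ in the interior while $\underline u$ sits on the ``lower branch'' $\frac{|x|^2}{2}-\sqrt{1-f(0)}\,x_1x_2$. Indeed a quadratic $\underline u$ with $\det D^2\underline u\equiv f$ is a sub-solution, yet $u$ need not coincide with it if the boundary data on $\partial\Omega\cap\{x_1x_2>0\}$ pushes $u$ above $\underline u$; the paper's subsequent remark and Lemma~\ref{exm-sta:label} are constructed around exactly this phenomenon. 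In the paper's proof, when $u\not\equiv\underline u$, the strong maximum principle gives $u>\underline u$ in the interior, and then a Hopf-lemma argument at a fixed scale $\lambda_0$ produces a strict separation $u_{\lambda_0}\geq\underline u_{\lambda_0}+\epsilon x_1x_2$ on a small boundary box. A barrier of the form $\underline u_\lambda+\epsilon x_1x_2-|x''-\tilde x''|^2\delta^2\epsilon$ (whose determinant can be bounded below by $\det D^2\underline u_\lambda\geq f$ because the cross term $2\sqrt{1-f(0)}\,\epsilon$ dominates the error terms for small $\delta,\epsilon,\lambda$) propagates this separation to all smaller scales, and in the blow-up limit one obtains $v\geq\underline P+\epsilon x_1x_2$ --- precisely the strict inequality needed to invoke Lemma~\ref{lemma-liou2} and conclude $v=\frac{|x|^2}{2}+\sqrt{1-f(0)}\,x_1x_2$. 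This Hopf/propagation step is the missing idea in your proposal; without it the case $\underline u_{12}(0)=-\sqrt{1-f(0)}$, $u\not\equiv\underline u$ remains unresolved.
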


\begin{proof}
Suppose $u\ne \underline u$.
By strong maximum principle, one obtains
\begin{equation*}
u>\underline{u},\quad \text{in}\quad  (0,3)^2\times(-3,3)^{n-2}.
\end{equation*}
Denote
\begin{equation*}
u_\lambda(x)=\frac{u(\lambda x)}{\lambda^2},\quad \lambda\in (0,1/2).
\end{equation*}
It suffices to show that
\begin{equation*}
u_\lambda\rightarrow \frac{|x|^2}{2}+\sqrt{1-f(0)}x_1x_2,\text{ locally uniformly in } [0,+\infty)^2\times (-\infty,+\infty)^{n-2}
\end{equation*}
as $\lambda\rightarrow 0^+$.  Up to a subsequence, one assumes
\begin{equation*}
u_{\lambda_l}\rightarrow  v \text{ locally uniformly in } \overline{V}=\overline{\mathbb (\mathbb R_{+})^2}\times\mathbb R^{n-2}.
\end{equation*}
 Then, by [Theorem 1.1,\cite{Savin2013}], [Theorem 2,\cite{Caffarelli1990-2}] and Lemma \ref{lem-C11}, $v\in  C^{1,1}(\overline{V})\cap C^{2,\beta}( \overline{V}\setminus (0,0)\times \mathbb R^{n-2} )$ is a convex solution of
\begin{equation*}
\begin{split}
\det D^2 v  &=f(0)\quad \text{in } V ,\\
v&=\frac{|x|^2}{2}\quad\text{on }\partial V,\\
 v&\geq \frac{|x|^2}{2}+\underline{u}_{12}(0)x_1x_2 \quad\text{in } V.
\end{split}
\end{equation*}
Claim: $v=\frac{|x|^2}{2}+\sqrt{1-f(0)}x_1x_2$.
\par
{\it Case 1.} $f(0)=1$. The claim follows from Theorem \ref{thm-liou2}.

{\it Case 2.} $f(0)\in (0,1)$, $\underline{u}_{12}(0)\in (-\sqrt{1-f(0)},\sqrt{1-f(0)}]$. The claim follows from Lemma \ref{lemma-liou2}.

{\it Case 3.} $f(0)\in (0,1),$ $\underline{u}_{12}(0)=-\sqrt{1-f(0) }$. This is the most interesting case.
\par  Denote $\underline{P} =\frac{|x|^2}{2}-\sqrt{1-f(0)}x_1x_2$ and
\begin{equation*}
\underline{u}_\lambda(x)=\frac{\underline{u}(\lambda x)}{\lambda^2},\quad \lambda\in (0,1/2).
\end{equation*}
Since $\underline{u}$ is $C^2$, for small $\epsilon_1>0$, there exists $\Lambda(\epsilon_1)>0$ such that
\begin{equation}\label{u-P}
\|\underline{u}_\lambda-\underline P\|_{C^2([0,2]^2\times [-2,2]^{n-2} )}<\epsilon_1,\quad \forall 0<\lambda<\Lambda(\epsilon_1).
\end{equation}
By Hopf's lemma, for small $\delta>0$, there exists $\epsilon=\epsilon(\delta,\lambda)>0$ such that
\begin{equation*}
u_\lambda \geq\underline{u}_\lambda+\epsilon x_1x_2 \quad\text{on }\partial ([0,\delta]^2)\times [-2,2]^{n-2}.
\end{equation*}
$\forall \widetilde{x}''\in [-1,1]^{n-2}$, define
\begin{equation*}
\overline{u}_\lambda(x)=\underline{u}_\lambda+\epsilon x_1x_2 -|x''-\widetilde{x}''|^2\delta^2 \epsilon.
\end{equation*}
By our choice of $\epsilon$ and $\delta$, there holds
$$\overline{u}_\lambda(x)\leq u_\lambda(x),\quad\text{on}\quad \partial \big( [0,\delta]^2\times [-2,2]^{n-2}\big).$$
A direct calculation yields that
\begin{equation*}
\det  D^2 \overline{u}_\lambda \geq \det  D^2 \underline{u}_\lambda +2\sqrt{1-f(0)}\epsilon-C(n)(\delta^2+\epsilon+\epsilon_1)\epsilon,\quad \text{in} \quad [0,\delta]^2\times [-2,2]^{n-2},
\end{equation*}
where $C(n)$ is some constant depending only on $n$. Hence, for $\delta,\epsilon_1,\epsilon,\lambda$ small enough, we have
\begin{equation*}
\det  D^2 \overline{u}_\lambda \geq \det  D^2 \underline{u}_\lambda\geq \det  D^2 u_\lambda,\quad \text{in} \quad [0,\delta]^2\times [-2,2]^{n-2}.
\end{equation*}
By standard maximum principle, one gets
$$u_\lambda \geq \overline{u}_\lambda, \quad \text{in} \quad [0,\delta]^2\times [-2,2]^{n-2}.$$
In particular, by the arbitrariness of $\widetilde{x}''\in [-1,1]^{n-2}$, for fixed small $\lambda_0$, we have
\begin{equation}\label{u-lambda-esti}
u_{\lambda_0} \geq\underline{u}_{\lambda_0}+\epsilon x_1x_2, \text{ in } [0,\delta]^2\times [-1,1]^{n-2}.
\end{equation}
This implies
\begin{equation}\label{u-lambda-esti-2}
u_\lambda(x) \geq\underline{u}_\lambda(x)+\epsilon x_1x_2, \quad\text{in }\left[0,\frac{\delta\lambda_0}{\lambda}\right]^2\times \left[-\frac{\lambda_0}{\lambda},\frac{\lambda_0}{\lambda}\right]^{n-2},\quad  \forall \lambda\in (0,\lambda_0].
\end{equation}
Taking $\lambda\rightarrow 0$, one obtains
$$v\geq \underline P+\epsilon x_1x_2,\text{ in } (\mathbb R_{+})^2\times\mathbb R^{n-2}.$$
Then, Lemma \ref{lemma-liou2} implies $v=\frac{|x|^2}{2}+\sqrt{1-f(0)}x_1x_2$ .
\end{proof}
\begin{remark}\label{rem-face-2-a}
For the general case, by an affine transformation $T$, we can assume
\begin{equation*}
 (\underline u\circ T^{-1})_{ij}(0)=\delta_{ij}.
\end{equation*}
Then, by an orthogonal transformation $O$, one has
\begin{equation*}
OT((\mathbb R_+)^2\times \mathbb R^{n-2})=V_{\Theta(\underline u,0,\Omega)/\pi}\times \mathbb R^{n-2}.
\end{equation*}
Let
\begin{equation*}
\underline{U}(x)=\underline u\circ T^{-1}\circ O^{-1}\circ A^{-1}_{\Theta(\underline u,0,\Omega)/\pi}(x).
\end{equation*}
Then, $\underline{U}(x)$ satisfies \eqref{subu-expansion-1} in $B_r(0)\cap ((\mathbb R_+)^2\times \mathbb R^{n-2})$ for some small $r>0$.
This implies that we can reduce the general case to the situation in Lemma \ref{cd2-pC2-1}.
\end{remark}

\begin{remark}\label{rem-face-2-b}
Lemma \ref{cd2-pC2-1} implies $u$ is pointwise $C^2$ along $(0,0)\times (-3,3)^{n-2}$. From boundary data $\varphi$, one also knows
\begin{equation*}
u_{ij}(0,0,x'')=\varphi_{ij}(0,0,x''),\quad i,j=1,\cdots,n,\quad i+j\ne 3.
\end{equation*}
By expansion of \eqref{C2-cd2-eq}, then either $u=\underline u$ or
$u_{12}(0,0,x'')$ is the big root of the following univariate quadratic equation
\begin{equation*}
a u_{12}^2(0,0,x'')+b u_{12}(0,0,x'')+c=0
\end{equation*}
which is equivalent to the equation
\begin{equation*}
\det D^2 u=f,\quad \text{at}\quad (0,0,x'').
\end{equation*}
 Hence, $u_{12}(0,0,x'')$  is continuous along $(0,0)\times (-3,3)^{n-2}$.
\end{remark}

\begin{remark} Lemma \ref{cd2-pC2-1} also implies the regularity at $\Gamma_{n-2}(\Omega)\backslash \Gamma_{n-3}(\Omega)$ can't be determined locally.  Suppose $u_\varepsilon$ be an Alexandrov solution of
\begin{equation*}
\begin{split}
&\det D^2 u_\varepsilon=3/4,\quad \text{in}\quad (0,1)^2,\\
&u_\varepsilon=\frac{1}{2}|x|^2-(\frac 12+\varepsilon )x_1x_2,\quad \text{on}\quad \partial([0,1]^2),
\end{split}
\end{equation*}
where $\varepsilon\in(-\frac{1}{4},\frac{1}{4})$.
Note that the boundary value of $u_\varepsilon$ near $(0,0)$ is independent of $\varepsilon$. However, from Lemma \ref{cd2-pC2-1}, $u_\varepsilon$ can't be $C^2$ at $(0,0)$ for $\varepsilon>0$ and is $C^2$ at $(0,0)$ for $\varepsilon\le 0$.
\end{remark}

\begin{theorem}\label{C2-cd2}
Suppose all the assumptions except \eqref{subu-expansion-1} in Lemma \ref{cd2-pC2-1} are fulfilled. Then,
$u$ is $C^2$ up to $(0,0)\times [-1,1]^{n-2}$.
\end{theorem}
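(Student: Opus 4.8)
The plan is to upgrade the pointwise rigidity of Lemma \ref{cd2-pC2-1} into genuine $C^2$ regularity by showing that the Hessian $D^2 u$ extends continuously to every point of the edge $(0,0)\times[-1,1]^{n-2}$. First I would dispose of the degenerate alternative: if $u=\underline u$, then $u\in C^2(\overline\Omega)$ by hypothesis and there is nothing to prove, so assume throughout that $u\ne\underline u$, whence $u>\underline u$ in the interior by the strong maximum principle. Next, using Remark \ref{rem-face-2-a}, after an affine transformation $T$, an orthogonal transformation, and the shear $A_{\Theta(\underline u,0,\Omega)/\pi}$ depending on the edge point, one reduces to the normalized situation of Lemma \ref{cd2-pC2-1}, in which the (transformed) subsolution satisfies \eqref{subu-expansion-1} at the base point. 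Note, however, that this normalization a priori depends on the point $x''$ along the edge, so care is needed when claiming uniformity; since $\underline u\in C^2(\overline\Omega)$, the family of affine maps depends continuously on $x''$, and compactness of $[-1,1]^{n-2}$ makes the reduction uniform.

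The core of the argument is a blow-up/compactness scheme combined with the Liouville theorems of Section 3. Fix a sequence of interior points $x_l\to 0$ (or more precisely, points approaching an arbitrary edge point $(0,0,x_0'')$) and set $u_{\lambda_l}(x)=u(\lambda_l x)/\lambda_l^2$ with $\lambda_l\to 0$ chosen comparably to $|x_l|$. By Lemma \ref{lem-C11} the rescalings are uniformly $C^{1,1}$, so up to a subsequence $u_{\lambda_l}\to v$ locally uniformly on $\overline V$ with $V=(\R_+)^2\times\R^{n-2}$, and by the interior estimates of Caffarelli and the boundary estimates of Savin, $v\in C^{1,1}(\overline V)\cap C^{2,\beta}(\overline V\setminus(0,0)\times\R^{n-2})$ solves $\det D^2 v=f(0)$ in $V$, $v=|x|^2/2$ on $\partial V$, together with the subsolution bound $v\ge |x|^2/2+\underline u_{12}(0)x_1x_2$. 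Exactly the three-case analysis in the proof of Lemma \ref{cd2-pC2-1} (Theorem \ref{thm-liou2} when $f(0)=1$; Lemma \ref{lemma-liou2} when $f(0)<1$ and $\underline u_{12}(0)>-\sqrt{1-f(0)}$; and the Hopf-lemma plus auxiliary-barrier trick of Case 3 when $\underline u_{12}(0)=-\sqrt{1-f(0)}$, which produces a strict gap $v\ge\underline P+\epsilon x_1x_2$ and then again Lemma \ref{lemma-liou2}) forces the unique limit $v=|x|^2/2+\sqrt{1-f(0)}\,x_1x_2$. Since the limit is the same quadratic polynomial regardless of the sequence, we obtain $u(x)=\tfrac{|x|^2}{2}+\sqrt{1-f(0)}\,x_1x_2+o(|x|^2)$, i.e. the Hessian of $u$ at the vertex of the model exists and equals that of this polynomial; transplanting back through the (point-dependent) normalizations, $D^2u$ is well-defined at $(0,0,x_0'')$ with the value dictated by $\varphi$ and the equation as in Remark \ref{rem-face-2-b}.

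To finish I would promote "Hessian exists at each edge point" to "Hessian is continuous up to the edge". The off-diagonal-in-$x''$ and all-but-$u_{12}$ entries are governed by $\varphi\in C^{2,\beta}$ hence are already continuous; the delicate entry $u_{12}(0,0,x'')$ is, by Remark \ref{rem-face-2-b}, the larger root of a quadratic whose coefficients are built from $\varphi$ and $f$ and therefore depends continuously on $x''$. Combined with the interior $C^{2,\beta}$ estimate away from $(0,0)\times[-1,1]^{n-2}$ and the uniform closeness $u_\lambda\to v$ established above, a standard argument (if $D^2u$ failed to be continuous at some edge point there would be points $y_k\to(0,0,x_0'')$ with $D^2u(y_k)$ bounded away from the prescribed limit, contradicting the blow-up convergence after rescaling by $|y_k'|$ where $y_k'$ denotes the first two coordinates) yields continuity of $D^2u$ on $\overline\Omega$ near the edge. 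The main obstacle I anticipate is precisely Case 3 of the blow-up: when $\underline u_{12}(0)$ sits exactly at the threshold $-\sqrt{1-f(0)}$, the naive maximum principle gives only $v\ge\underline P$ with no room, and one must run the Hopf-lemma construction with the sliding barrier $\underline u_\lambda+\epsilon x_1x_2-|x''-\widetilde x''|^2\delta^2\epsilon$ carefully and uniformly in the edge variable $x''$ to extract the strict gap needed to invoke Lemma \ref{lemma-liou2}. Everything else is bookkeeping of affine normalizations and standard elliptic estimates.
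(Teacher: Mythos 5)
Your proposal follows essentially the same scheme as the paper's proof: dispose of $u=\underline u$ by the strong maximum principle, reduce to the normalized setting of Lemma \ref{cd2-pC2-1} via Remark \ref{rem-face-2-a}, obtain pointwise twice-differentiability at each edge point from that lemma, and then prove continuity of $D^2u$ by contradiction, blowing up at the moving edge projection $(0,0,(y_k)'')$ with scale $|y_k'|$ and classifying the limit by Theorem \ref{thm-liou2} or Lemma \ref{lemma-liou2}. For the threshold case $\underline u_{12}(0)=-\sqrt{1-f(0)}$ the paper does not re-run the Hopf/sliding-barrier construction at the moving base point as you suggest, but instead transports the strict-gap estimate \eqref{u-lambda-esti-2} (already uniform in the edge variable) through the observation that $|y_k'|/|(y_k)''|\to 0$, so the rescaled windows around $(0,0,(y_k)'')$ land in the region where that estimate applies; your variant of running Hopf uniformly along the edge is a workable implementation of the same idea.
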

\begin{proof}
It is enough to prove that $u$ is $C^2$ up to the origin. Without loss of generality, we may assume the assumptions of Lemma \ref{cd2-pC2-1} are satisfied and $u>  \underline{u}$. Hence, $u\in C^2\{x^0\}$, $\forall x^0\in (0,0)\times [-1,1]^{n-2}$ and
\begin{equation*}
u_{ij}(0)=\delta_{ij}+\sqrt{1-f(0)}(\delta_{i1}\delta_{j2}+\delta_{i2}\delta_{j1}).
\end{equation*}
Consider a sequence of points $ \Omega \ni x^{(l)}\rightarrow 0$ and the following cases.
\begin{itemize}
	\item[(1).] $\lim_{l\rightarrow +\infty}\frac{x^{(l)}}{|x^{(l)}|}=\bar x\in \mathbb S^{n-1}\cap \{x_1^2+x_2^2>0\}$. Define
\begin{equation*}
u^{(l)}(x)=\frac{u(\lambda_l x)}{\lambda_l^2},\quad \lambda_l=|x^{(l)}|.
\end{equation*}
By Lemma \ref{cd2-pC2-1}, there holds
	\begin{equation*}
	u^{(l)}(x)\rightarrow \frac{1}{2}|x|^2+\sqrt{1-f(0)}x_1x_2,\text{ locally uniformly in } \overline{(\mathbb R_+)^2}\times\mathbb R^{n-2}.
	\end{equation*}
	Then, boundary estimates(Theorem 1.1,\cite{Savin2013}) and interior estimate(Theorem 2,\cite{Caffarelli1990-2}) also imply the above convergence is $C^2$ locally in $(\overline{(\mathbb R_+)^2}\times\mathbb R^{n-2})\backslash\{x_1=x_2=0\}$. Then, one obtains
	\begin{equation*}
	u_{ij}(x^{(l)})=u^{(l)}_{ij}\left(\frac{x^{(l)}}{|x^{(l)}|}\right)\rightarrow u_{ij}(0).
	\end{equation*}
	\item[(2).] $\lim_{l\rightarrow +\infty}\frac{x^{(l)}}{|x^{(l)}|}=\bar x\in \mathbb S^{n-1}\cap \{x_1=x_2=0\}$. Let $\ell^{(l)}(x)$ be the supporting plane of $u,\underline u$ at $y^{(l)}=(0,0,\widetilde{x^{(l)}})$ where $\widetilde{x^{(l)}}=(x^{(l)}_3,\cdots,x^{(l)}_n)$. Define
	\begin{equation*}
	u^{(l)}(x)=\frac{(u-\ell^{(l)})(\lambda_l x+y^{(l)})}{\lambda_l^2}, \underline{u}^{(l)}(x)=\frac{(\underline u-\ell^{(l)})(\lambda_l x+y^{(l)})}{\lambda_l^2},\lambda_l=\sqrt{(x^{(l)}_1)^2+(x^{(l)}_2)^2}.
	\end{equation*}
	Then $z^{(l)}=\left(\frac{x^{(l)}_1}{\lambda_l},\frac{x^{(l)}_2}{\lambda_l},0,\cdots,0\right)\rightarrow \bar z\in \mathbb S^1 \times (0,\cdots,0)$. It should be emphasized that we can't use Lemma \ref{cd2-pC2-1} directly in this case.
	\begin{itemize}
		\item[(2.1).] $f(0)=1$. Boundary estimate(Theorem 1.1,\cite{Savin2013}) and interior estimate(Theorem 2,\cite{Caffarelli1990-2}) imply that
		\begin{equation*}
		u^{(l)}(x)\rightarrow P\text{ locally uniformly in } C^2((\overline{(\mathbb R_+)^2}\times\mathbb R^{n-2})\backslash\{x_1=x_2=0\})
		\end{equation*}
		 where $P$ is a solution of \eqref{liou-2} with $c=1$ and $P\ge \frac 12|x|^2$.  Theorem \ref{thm-liou2} implies $P=\frac 12 |x|^2$. Hence, there holds
		\begin{equation*}
		u_{ij}(x^{(l)})=u_{ij}^{(l)}(z^{(l)})\rightarrow \delta_{ij},\quad l\rightarrow +\infty.
		\end{equation*}
	\item[(2.2).] $f(0)\in(0,1)$ and $\underline u_{12}(0)\in (-\sqrt{1-f(0)},\sqrt{1-f(0)}]$. Similar to (2.1),
	\begin{equation*}u^{(l)}(x),\underline{u}^{(l)}(x)\rightarrow P,\underline{P}
	\end{equation*}
	 where $P$ is a solution of \eqref{liou-2} with $c=f(0)\in (0,1)$ and
	\begin{equation*}
	P(x)\ge \underline{P}(x)=\frac 12|x|^2+\underline{u}_{12}(0)x_1x_2,\text{ in }(\mathbb R_+)^2\times\mathbb R^{n-2}.
	\end{equation*}
By Lemma \ref{lemma-liou2}, $P=\frac 12 |x|^2+\sqrt{1-f(0)}x_1x_2$ and this implies
	\begin{equation*}
		u_{ij}(x^{(l)})=u_{ij}^{(l)}(z^{(l)})\rightarrow \delta_{ij}+\sqrt{1-f(0)}(\delta_{i1}\delta_{j2}+\delta_{i2}\delta_{j1}),\quad l\rightarrow +\infty.
		\end{equation*}
	\item[(2.3).] $f(0)\in(0,1)$ and $\underline u_{12}(0)=-\sqrt{1-f(0)}$.
	From the proof of Lemma \ref{cd2-pC2-1}, there exist $\epsilon,\delta,\lambda_0>0$ such that
	\begin{equation}
	\frac{u(\lambda x)-\underline u(\lambda x)}{\lambda^2}\ge \epsilon x_1x_2,\quad x\in [0,\delta]^{2}\times [-1,1]^{n-2},\quad \lambda\in (0,\lambda_0).
	\end{equation}
	Notice that $\frac{\lambda_l}{|y^{(l)}|}\rightarrow 0$ as $l\rightarrow +\infty$. Hence, $\forall R>0$,  for large $l$, there holds
	\begin{equation*}
	\frac{\lambda_l x+y^{(l)}}{|\lambda_l x+y^{(l)}|}\in [0,\delta]^{2}\times [-1,1]^{n-2},\quad \forall x\in B_R(0)\cap ((\mathbb R_+)^2\times\mathbb R^{n-2}).
	\end{equation*}
	Then
	\begin{equation*}
	\begin{split}
	u^{(l)}(x)-\underline u^{(l)}(x)=&\frac{u(\lambda_l x+y^{(l)})-\underline u(\lambda_l x+y^{(l)})}{|\lambda_l x+y^{(l)}|^2}\cdot \frac{|\lambda_l x+y^{(l)}|^2}{\lambda_l^2}\\
	\ge &\epsilon x_1x_2,\quad \text{in}\quad B_R(0)\cap ((\mathbb R_+)^2\times\mathbb R^{n-2}).
	\end{split}
	\end{equation*}
	Similar to (2.1), $u^{(l)}(x)\rightarrow P$ where $P$ is a solution of \eqref{liou-2} with $c=f(0)\in (0,1)$ and
	\begin{equation*}
	P(x)\ge \frac 12 |x|^2-\sqrt{1-f(0)}x_1x_2+\epsilon x_1x_2,\text{ in }(\mathbb R_+)^2\times\mathbb R^{n-2}.
	\end{equation*}
	By Lemma \ref{lemma-liou2}, $P=\frac 12 |x|^2+\sqrt{1-f(0)}x_1x_2$ and this implies
	\begin{equation*}
		u_{ij}(x^{(l)})=u_{ij}^{(l)}(z^{(l)})\rightarrow \delta_{ij}+\sqrt{1-f(0)}(\delta_{i1}\delta_{j2}+\delta_{i2}\delta_{j1}),\quad l\rightarrow +\infty.
		\end{equation*}
	\end{itemize}
\end{itemize}
This ends the proof of Theorem \ref{C2-cd2}.
\end{proof}

From Lemma \ref{lemma-liou2}, and the blow up argument used in the proof of Theorem \ref{C2-cd2}, we have the following proposition.
\begin{prop}\label{ modulusofcontinuity1}
Suppose all the assumptions in Theorem \ref{C2-cd2} are satisfied. In addition, there exists an $\epsilon>0$ such that
\begin{equation*}
\det D^2 \underline{u}-f\geq \epsilon>0,\text{ on } (0,0)\times [-1,1]^{n-2}.
\end{equation*}
 Then, the modulus of continuity of $D^2 u$ along $ (0,0)\times [-1,1]^{n-2}$ depends only on $n$, $\epsilon$, $\beta$, $\|f\|_{C^\beta(\overline{\Omega})}$, $\|\frac{1}{f}\|_{L^{\infty}(\Omega)}$, $\|\underline{u} \|_{C^{2} (\overline{\Omega})}$, $\|\varphi\|_{C^{2,\beta}(\overline{\Omega})}$,
and the modulus of continuity of $D^2 \underline{u}$ in $\overline{\Omega}$.
\end{prop}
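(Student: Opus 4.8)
The plan is to run the blow-up argument of Theorem~\ref{C2-cd2} in a quantitative form, exploiting the fact that the strict sub-solution hypothesis $\det D^2\underline u-f\ge\epsilon$ forces \emph{every} blow-up limit at an edge point to be the single rigid solution produced by Lemma~\ref{lemma-liou2}. Concretely I argue by contradiction: if the modulus of $D^2u$ near $(0,0)\times[-1,1]^{n-2}$ were not controlled by the listed data, there would be $\delta_0>0$, a sequence of data $(f^{(k)},\varphi^{(k)},\underline u^{(k)})$ obeying all hypotheses with the prescribed uniform bounds and a common modulus for $D^2\underline u^{(k)}$, the associated solutions $u^{(k)}$, and points $x^{(k)}$ tending to the edge with nearest edge point $z^{(k)}$, such that $\big|D^2u^{(k)}(x^{(k)})-D^2u^{(k)}(z^{(k)})\big|\ge\delta_0$. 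It suffices to test $x$ against its nearest edge point: a general pair of points near the edge is handled afterwards by the triangle inequality, once one knows that the map sending an edge point $z$ to the Hessian of the model profile $\tfrac{|x|^2}{2}+\sqrt{1-f/\det D^2\underline u}\,x_1x_2$ at $z$ is continuous with a modulus depending only on $\|f\|_{C^\beta}$, $\|1/f\|_{L^\infty}$, $\|\varphi\|_{C^{2,\beta}}$, $\|\underline u\|_{C^2}$ (using Remark~\ref{rem-face-2-b}). Passing to a subsequence, $z^{(k)}\to\bar z$ in the relative interior of $(0,0)\times[-1,1]^{n-2}$, hence at a fixed positive distance from $\Gamma_{n-3}(\Omega)$.

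Before blowing up I normalize uniformly as in Remark~\ref{rem-face-2-a}. Since $u^{(k)}\not\equiv\underline u^{(k)}$ (the sub-solution is strict), $\underline u^{(k)}$ is uniformly convex, with the eigenvalues of $D^2\underline u^{(k)}$ trapped in a fixed compact subinterval of $(0,\infty)$ determined by $n$, $\|1/f\|_{L^\infty}$ and $\|\underline u\|_{C^2}$; consequently the affine map bringing $D^2\underline u^{(k)}(z^{(k)})$ to the identity and the tangent cone to $V:=(\mathbb{R}_+)^2\times\mathbb{R}^{n-2}$ has its norm, inverse norm and resulting dihedral angle controlled by these quantities (the cosine of the angle is bounded away from $\pm1$ because the $2$-dimensional principal sub-block of $D^2\underline u^{(k)}(z^{(k)})$ has determinant bounded below, via $\det D^2\underline u=f+\text{gap}$). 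In the normalized frame at $z^{(k)}$ the equation reads $\det D^2u^{(k)}=c^{(k)}$ on a neighbourhood of $0$ in $V$ with $c^{(k)}<1$ and $1-c^{(k)}$ bounded below; since $u^{(k)},\underline u^{(k)}$ both equal $\varphi^{(k)}$ on the two faces through $z^{(k)}$, one has $u^{(k)}_{ij}(0)=\underline u^{(k)}_{ij}(0)=\delta_{ij}$ except for the $(1,2)$ entry, so the cofactor identity gives $\det D^2\underline u^{(k)}(0)=1-(\underline u^{(k)}_{12}(0))^2$. The strictness then yields $(\underline u^{(k)}_{12}(0))^2\le 1-c^{(k)}-c_0$ for a controlled $c_0>0$, i.e.\ $|\underline u^{(k)}_{12}(0)|\le\sqrt{1-c^{(k)}}-c_1$; thus we are always in Case~2 of Lemma~\ref{cd2-pC2-1} with a uniform margin. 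Moreover $u^{(k)}_{12}(0)=+\sqrt{1-c^{(k)}}$ (the big root, Remark~\ref{rem-face-2-b}), so $D^2u^{(k)}(z^{(k)})$ equals the model Hessian in this frame.

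Now set $\lambda_k:=\operatorname{dist}(x^{(k)},\{x_1=x_2=0\})\to0$, subtract the common supporting plane $\ell^{(k)}$ of $u^{(k)}$ and $\underline u^{(k)}$ at $z^{(k)}$, and rescale $u^{(k),\ast}(x):=\lambda_k^{-2}(u^{(k)}-\ell^{(k)})(\lambda_k x+z^{(k)})$, likewise $\underline u^{(k),\ast}$. Because $z^{(k)}$ stays away from $\Gamma_{n-3}(\Omega)$, the rescaled domains exhaust $V$; the uniform $C^{1,1}$ bound of Lemma~\ref{lem-C11} yields (after a subsequence) $u^{(k),\ast}\to v$ locally uniformly, and — because $D^2\underline u^{(k)}$ has a common modulus of continuity — $\underline u^{(k),\ast}$ converges to the \emph{quadratic} $\underline v(x)=\tfrac{|x|^2}{2}+b\,x_1x_2$ with $b=\lim\underline u^{(k)}_{12}(0)$, $|b|\le\sqrt{1-c}-c_1$, where $c:=\lim c^{(k)}\in(0,1)$. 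Then $v$ is a convex Alexandrov solution of $\det D^2v=c$ in $V$ with $v=\tfrac{|x|^2}{2}$ on $\partial V$ and
\[
v\ \ge\ \underline v\ =\ \tfrac{|x|^2}{2}+b\,x_1x_2\ \ge\ \tfrac{|x|^2}{2}-\sqrt{1-c}\,x_1x_2+c_1\,x_1x_2\quad\text{in }V ,
\]
using $x_1x_2\ge0$ in $V$. Lemma~\ref{lemma-liou2} (with $\varepsilon_0=c_1$) forces $v(x)=\tfrac{|x|^2}{2}+\sqrt{1-c}\,x_1x_2$, independently of all choices. By the interior estimate (Theorem~2, \cite{Caffarelli1990-2}) and the boundary estimate (Theorem~1.1, \cite{Savin2013}) the convergence is in $C^2_{\mathrm{loc}}(\overline V\setminus\{x_1=x_2=0\})$; since $(x^{(k)}-z^{(k)})/\lambda_k$ has distance exactly $1$ from $\{x_1=x_2=0\}$ we obtain $D^2u^{(k)}(x^{(k)})\to D^2\big(\tfrac{|x|^2}{2}+\sqrt{1-c}\,x_1x_2\big)$, while $D^2u^{(k)}(z^{(k)})$ tends to the same constant matrix by the explicit formula above. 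This contradicts $\big|D^2u^{(k)}(x^{(k)})-D^2u^{(k)}(z^{(k)})\big|\ge\delta_0$ and completes the argument.

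The step I expect to be the main obstacle is the uniformity bookkeeping, concentrated in the second paragraph: one must verify that the normalizing affine maps, the rescaled equation, the margin $c_1$ and the modulus of $\underline u^{(k),\ast}$ all depend only on $n$, $\epsilon$, $\beta$, $\|f\|_{C^\beta}$, $\|1/f\|_{L^\infty}$, $\|\underline u\|_{C^2}$, $\|\varphi\|_{C^{2,\beta}}$ and the modulus of $D^2\underline u$. The conceptual point is that $\det D^2\underline u-f\ge\epsilon$ is exactly what keeps us away from the borderline Case~3 of Lemma~\ref{cd2-pC2-1} — the case lacking any sub-solution margin, in which $D^2u$ need not have a modulus controlled by the data alone (cf.\ Remark~5.2 of \cite{LeSavin2023}); in the present regime the sub-solution's blow-up is a quadratic strictly above the boundary cone profile, so Lemma~\ref{lemma-liou2} applies with a uniform gap and pins down the limit. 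Alternatively, once the uniqueness of the blow-up limit is in hand, one may feed the resulting estimate $|u-\ell-(\tfrac{|x|^2}{2}+\sqrt{1-c}\,x_1x_2)|=o(|x-z|^2)$, uniform in $z$, into Theorem~\ref{thm-2+growth} to obtain a power-rate expansion near each edge point, which a fortiori yields the claimed modulus.
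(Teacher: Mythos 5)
Your proposal is correct and follows the route the paper itself indicates (the paper gives no separate proof but states that the result follows ``from Lemma~\ref{lemma-liou2} and the blow-up argument used in the proof of Theorem~\ref{C2-cd2}''), which is precisely what you carry out. Your main contribution is making the uniformity explicit via the compactness--contradiction framing and the cofactor computation showing the strict gap $\det D^2\underline u-f\ge\epsilon$ yields a uniform margin $c_1>0$ so that every blow-up limit falls in the scope of Lemma~\ref{lemma-liou2}; this is the intended argument, written out.
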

The condition of Proposition \ref{ modulusofcontinuity1} can also be replaced by the following geometric condition.
\begin{prop}\label{ modulusofcontinuity2}
Suppose all the assumptions in Theorem \ref{C2-cd2} are satisfied. In addition, there exists $\epsilon>0$ such that
\begin{equation*}
\frac{\pi}{2}-\Theta(\underline{u},x,\Omega)\geq \epsilon,\quad \forall x\in (0,0)\times [-1,1]^{n-2}.
\end{equation*}
Then, the modulus of continuity of $D^2 u$ along $ (0,0)\times [-1, 1]^{n-2}$ depends only on $n$, $\epsilon$, $\beta$, $\|f\|_{C^\beta(\overline{\Omega})}$, $\|\frac{1}{f}\|_{L^{\infty}(\Omega)}$,  $\|\underline{u} \|_{C^{2} (\overline{\Omega})}$, $\|\varphi\|_{C^{2,\beta}(\overline{\Omega})}$,
and the modulus of continuity of $D^2 \underline{u}$ in $\overline{\Omega}$.
\end{prop}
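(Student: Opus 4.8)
The plan is to rerun the blow-up/compactness scheme from the proof of Theorem~\ref{C2-cd2}, but to use the strong $A$-condition for $\underline u$ to make it quantitative: it forces the blow-up limit at an edge point to live in a cone whose first Dirichlet eigenvalue exceeds $2n$ by a definite amount, which makes Theorem~\ref{thm-2+growth} available and converts the bare continuity of $D^2u$ into a uniformly controlled (in fact Hölder) modulus. We may assume $u\not\equiv\underline u$, so $u>\underline u$ in $\Omega$; otherwise $D^2u=D^2\underline u$ and there is nothing to prove. Fix $x^0\in(0,0)\times[-1,1]^{n-2}$, where the tangent cone of $\Omega$ is $(\R_+)^2\times\R^{n-2}$. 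A linear change of variables preserving this cone --- a Schur-complement shear in $x''$, a $GL(n-2)$ rotation of $x''$, and a positive diagonal rescaling of $x_1,x_2$, all with condition numbers bounded by $\|\underline u\|_{C^2(\overline\Omega)}$ and $\|1/f\|_{L^\infty(\Omega)}$ --- normalizes $D^2\underline u(x^0)=I+\gamma\,(e_1e_2^{\top}+e_2e_1^{\top})$ with $\gamma=\underline u_{12}(x^0)$ and $\Theta(\underline u,x^0,\Omega)=\arccos\gamma$; a further rescaling of the equation preserves the bounds \eqref{cond-f-varphi}. Writing $c:=f(x^0)$ in the new coordinates, the subsolution inequality gives $1-\gamma^2=\det D^2\underline u(x^0)\ge c$ and the hypothesis $\Theta(\underline u,x^0,\Omega)\le\tfrac{\pi}{2}-\epsilon$ gives $\gamma\ge\sin\epsilon$, so
\[
c\le 1-\sin^2\epsilon<1 \qquad\text{and}\qquad \sqrt{1-c}+\gamma\ge\sin\epsilon ,
\]
both estimates being quantitative in $\epsilon$.

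Next, blow up at $x^0$. Because $u=\underline u=\varphi$ on the two faces through $x^0$ and their tangent spaces span $\R^n$, the first-order jets of $u,\underline u,\varphi$ at $x^0$ agree; subtracting this common affine part and setting $u_\lambda(x)=\lambda^{-2}u(x^0+\lambda x)$, $\underline u_\lambda(x)=\lambda^{-2}\underline u(x^0+\lambda x)$, one has $\underline u_\lambda\to\tfrac12|x|^2+\gamma x_1x_2$, while by Lemma~\ref{lem-C11} the $u_\lambda$ are convex with a uniform $C^{1,1}$ bound, so any subsequential limit $v$ is a convex solution of $\det D^2v=c$ in $V=(\R_+)^2\times\R^{n-2}$, $v=\tfrac12|x|^2$ on $\partial V$, with $v\ge\tfrac12|x|^2+\gamma x_1x_2=\tfrac12|x|^2-\sqrt{1-c}\,x_1x_2+(\sqrt{1-c}+\gamma)x_1x_2$. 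Lemma~\ref{lemma-liou2} applies (since $c<1$ and the gap $\sqrt{1-c}+\gamma\ge\sin\epsilon>0$) and gives $v\equiv\tfrac12|x|^2+\sqrt{1-c}\,x_1x_2$. As both bounds above are uniform over all admissible data, a standard contradiction argument --- assume a sequence of data satisfying the stated bounds violates the conclusion, extract blow-up limits, contradict Lemma~\ref{lemma-liou2} --- promotes this to a \emph{uniform} statement: there is a nondecreasing function $\omega_0$ with $\omega_0(0^+)=0$, depending only on the quantities listed in the proposition, such that
\[
\Big|u(x^0+x)-\tfrac12|x|^2-\sqrt{1-f(x^0)}\,x_1x_2\Big|\le \omega_0(|x|)\,|x|^2
\]
for $x$ small, uniformly over $x^0\in(0,0)\times[-1,1]^{n-2}$ (in the normalized coordinates).

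Now straighten the limit and apply Theorem~\ref{thm-2+growth}. Let $\nu\in(0,\tfrac12)$ be defined by $\cos(\nu\pi)=\sqrt{1-c}$; since $c\le1-\sin^2\epsilon$ we get $\nu\pi=\arccos\sqrt{1-c}\le\tfrac{\pi}{2}-\epsilon$, so $\widetilde V:=A_\nu^{-1}\big((\R_+)^2\times\R^{n-2}\big)=V_\nu\times\R^{n-2}$ has opening angle $\le\tfrac{\pi}{2}-\epsilon$; by Remark~\ref{emk-liouville1} together with continuity and monotonicity of the first eigenvalue in the opening angle, $\lambda_1(\widetilde V\cap\S^{n-1})=\mu(\mu+n-2)$ with $\mu>2$ and $\mu-2$ bounded below in terms of $n,\epsilon$. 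In the coordinates $y$ with $x=A_\nu y$, the function $\tilde u(y)=u(x^0+A_\nu y)$ (common affine part subtracted) solves $\det D^2\tilde u=\tilde f$ with $\tilde f(0)=\csc^2(\nu\pi)\,c=1$ and $|\tilde f-1|\le M|y|^\beta$; its boundary data $\tilde\varphi$ obeys $|\tilde\varphi-\tfrac12|y|^2|\le M|y|^{2+\beta}$ on the two faces of $\widetilde V$; and, using the identity $\tfrac12|x|^2+\sqrt{1-c}\,x_1x_2=\tfrac12|A_\nu^{-1}x|^2$ and the previous display, $|\tilde u(y)-\tfrac12|y|^2|\le\tilde\omega_0(|y|)|y|^2$. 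Hence all hypotheses of Theorem~\ref{thm-2+growth} hold on $\widetilde V\cap B_1$ (after a controlled rescaling), and it yields $|\tilde u(y)-\tfrac12|y|^2|\le C|y|^{2+\alpha}$ near $0$ with $C,\alpha>0$ depending only on the listed data. To pass from this $C^0$-decay to a modulus for $D^2u$: for $y$ near $0$ with distance comparable to $|y|$ from both $\partial\widetilde V$ and the edge, rescaling $\tilde u$ on the ball of radius $|y|/10$ and using the interior estimate (Theorem~2, \cite{Caffarelli1990-2}) gives $|D^2\tilde u(y)-I|\le C|y|^\alpha$; near a face but away from the edge, the boundary estimate (Theorem~1.1, \cite{Savin2013}) gives the same; and on the edge itself, $\tilde u_{ij}(0,0,y'')=\tilde\varphi_{ij}(0,0,y'')$ for $i+j\ne3$ while $\tilde u_{12}(0,0,y'')$ is the larger root of the relation $\det D^2\tilde u=\tilde f$ at $(0,0,y'')$ (the Remark following Lemma~\ref{cd2-pC2-1}), which with $\tilde f(0)=1$ forces $|D^2\tilde u(0,0,y'')-I|\le C|y''|^{\beta/2}$. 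Patching these bounds, carrying out the argument at every edge point of $[-1,1]^{n-2}$, and undoing the affine normalizations gives the asserted modulus of continuity of $D^2u$ along $(0,0)\times[-1,1]^{n-2}$.

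The main obstacle is the uniformity in the blow-up step: the compactness argument must be arranged so that the limiting rescaled solutions still carry the quantitative subsolution lower bound and the angle bound, so that Lemma~\ref{lemma-liou2} can be invoked with a gap depending only on $\epsilon$ --- it is this that produces a modulus $\omega_0$ independent of the particular solution and edge point. Given that uniform $\omega_0$, the coordinate normalizations above and the patching near the edge (where the larger-root branch is only $\tfrac\beta2$-Hölder, so one merely takes the worse of the two exponents) are routine.
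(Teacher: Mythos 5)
Your proposal is correct and follows the route the paper itself points to: the text preceding Propositions~\ref{ modulusofcontinuity1}--\ref{ modulusofcontinuity2} says they follow ``from Lemma~\ref{lemma-liou2} and the blow-up argument used in the proof of Theorem~\ref{C2-cd2},'' and that is exactly the core of your argument. The key observations --- that after normalizing $D^2\underline u(x^0)$ the strong $A$-condition gives $\gamma=\underline u_{12}(x^0)\ge\sin\epsilon$, hence $c=f(x^0)\le 1-\sin^2\epsilon<1$ and a gap $\sqrt{1-c}+\gamma\ge\sin\epsilon$ that makes Lemma~\ref{lemma-liou2} applicable with an $\varepsilon_0$ depending only on $\epsilon$; and that compactness of the admissible data (the $C^{1,1}$ bound from Lemma~\ref{lem-C11}, plus $\|f\|_{C^\beta}$, $\|1/f\|_{L^\infty}$, $\|\varphi\|_{C^{2,\beta}}$, $\|\underline u\|_{C^2}$, the modulus of $D^2\underline u$, and $\epsilon$) turns the qualitative convergence into a uniform modulus $\omega_0$ --- are precisely what is needed, and you correctly show how the quantitative gap passes to the blow-up limit.

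The detour through the straightening $A_\nu$ and Theorem~\ref{thm-2+growth} is not required for the bare modulus statement of Proposition~\ref{ modulusofcontinuity2}; the uniform $\omega_0$ plus the interior/boundary $C^{2,\beta}$ estimates already yield a modulus for $D^2u$, and the continuity of the limit polynomial along the edge (your remark on the big root, with the possible $\beta/2$ degeneration at double roots) completes it. What the extra step buys is a uniform H\"older rate for $u-P$, which is what Section~6 of the paper uses for the $C^{2,\alpha}$ estimate --- so you have in effect anticipated part of that argument. This is harmless overshoot; I would only flag that it obscures slightly that the proposition needs less machinery than you invoked.
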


\section{$C^{2}$ regularity up to $(n-k)$-face, $3\leq k\leq n$}
Consider the following Dirichlet problem:
\begin{equation}\label{C2-cdk-eq}
\begin{split}
&\det D^2 u=f \quad \text{in}\quad \Omega=(\mathbb V_k\times (-3,3)^{n-k})\cap B_3(0),\\
&u=\varphi \quad \text{on}\quad \partial\Omega\cap  (\partial\mathbb V_k\times [-3,3]^{n-k}).
\end{split}
\end{equation}

\begin{theorem}\label{thm-c2-cdk}
Let $u\in C^{1,1}(\overline{\Omega})\bigcap C^{2}(\overline{\Omega}\setminus (\partial \mathbb V_k\times [-3,3]^{n-k}))$ solves \eqref{C2-cdk-eq} with $0<f\in C^\beta(\overline{\Omega}),\varphi\in C^{2,\beta}(\overline{\Omega})$. In addition, there exists a globally $C^2$, convex, sub-solution $\underline u\in C^2(\overline{\Omega})$ to \eqref{C2-cdk-eq}(i.e. $\det D^2\underline u\ge f$ in $\Omega$; $\underline u=\varphi$ on $\partial \mathbb V_k\times [-3,3]^{n-k})$; $u\ge \underline u$ on $\mathbb V_k\times \partial([-3,3]^{n-k})$).
Suppose $u>\underline u$ in $\Omega$. Then, $u\in C^2\big(\overline{\Omega} \bigcap B_1(0) \big)$ if and only if
\begin{equation}\label{C2-ns-c}
\begin{split}
&\det D^2 \varphi=f,\text{ in } \Gamma_{n-3}(\Omega)\cap B_1(0),\\
& \Theta(\varphi,x,\Omega)\le \frac{\pi}2,\quad \forall x\in \Gamma_{n-3}(\Omega)\cap B_1(0).
\end{split}
\end{equation}
Moreover, the modulus of continuity of $D^2 u$ at $0$ depends only on $n$, $k$, $\beta$, $\|\varphi\|_{C^{2,\beta}(\overline{\Omega})}$,
$\|\frac{1}{f}\|_{L^{\infty}(\Omega)}$, $\|f\|_{C^\beta(\overline{\Omega})}$, $\|\underline{u} \|_{C^{2} (\overline{\Omega})}$,
and the modulus of continuity of $D^2 \underline{u}$ in $\overline{\Omega}$.
\end{theorem}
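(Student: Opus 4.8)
The plan is to induct on the codimension $k$, taking Theorem~\ref{C2-cd2} as the base case $k=2$, and to reduce the inductive step to a blow-up analysis at the corner point $0$ combined with the Liouville-type theorems of Section~3.

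\emph{Necessity.} If $D^2u(x_0)$ exists at $x_0\in\Gamma_{n-3}(\Omega)$, then, since $x_0$ lies on at least three facets on each of which $u=\varphi$, differentiating twice tangentially along each facet forces $u_{ij}(x_0)=\varphi_{ij}(x_0)$ for \emph{every} pair $(i,j)$ — any pair omits at least one of the three facet normals. Hence $D^2u(x_0)=D^2\varphi(x_0)=D^2\underline u(x_0)$ and $\det D^2\varphi(x_0)=f(x_0)$, i.e. (C1). For the $A$-condition I would argue by contradiction: normalizing $D^2\underline u(x_0)=I$ and restricting attention to an $(n-2)$-edge through $x_0$ whose dihedral angle exceeds $\pi/2$, Lemma~\ref{cd2-pC2-1} (using $u>\underline u$, hence $\det D^2\underline u\not\equiv f$) shows that $u_{12}$ along that edge tends to the ``large root'' value, which is incompatible with $D^2u$ being continuous at $x_0$ with value $D^2\varphi(x_0)$.

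\emph{Sufficiency.} Assume \eqref{C2-ns-c}. Every point $x'\in(\overline\Omega\cap B_1(0))\cap(\Gamma_{n-j}(\Omega)\setminus\Gamma_{n-j-1}(\Omega))$ with $2\le j\le k-1$ has tangent cone $\mathbb V_j\times\mathbb R^{n-j}$ and satisfies \eqref{C2-ns-c} nearby, so $u$ is $C^2$ at $x'$ by Theorem~\ref{C2-cd2} ($j=2$) or by the inductive hypothesis ($3\le j\le k-1$). At the corner $0$ I would subtract the common supporting plane of $u$ and $\underline u$ (their gradients at $0$ agree, being determined by $\nabla\varphi(0)$) and apply an affine transformation with $D^2\underline u(0)=I$; since $k\ge3$, the facet argument above also gives $D^2\varphi(0)=I$, so by (C1) the normalized right-hand side equals $1$ at $0$. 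By the $C^{1,1}$ bound (Lemma~\ref{lem-C11}) and convexity, the rescalings $u_\lambda(x)=u(\lambda x)/\lambda^2$ subconverge to a convex $v$ solving $\det D^2v=1$ in a convex cone $V$ bounded by $k$ hyperplanes through the origin, with $v=\tfrac{|x|^2}{2}$ on $\partial V$ and $v\ge\tfrac{|x|^2}{2}$ (since $u_\lambda\ge\underline u_\lambda\to\tfrac{|x|^2}{2}$). Now (C2) makes all dihedral angles of $V$ at most $\pi/2$, and $V$ has $k\ge3$ facets, so Remark~\ref{emk-liouville1} yields $\lambda_1(V\cap\mathbb S^{n-1})=\mu(\mu+n-2)$ with $\mu>2$; the growth condition \eqref{liou-growth} is automatic by Remark~\ref{rem-liou-growth}. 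Theorem~\ref{thm-liou1} then forces $v\equiv\tfrac{|x|^2}{2}$, hence $u_\lambda\to\tfrac{|x|^2}{2}$ as $\lambda\to0$, i.e. $|u(x)-\tfrac{|x|^2}{2}|\le\omega(|x|)|x|^2$ near $0$ with $\omega(0^+)=0$ (in the normalized coordinates). Combining this with $|f-1|\le M|x|^\beta$ and $|\varphi-\tfrac{|x|^2}{2}|\le M|x|^{2+\beta}$ (from $\varphi\in C^{2,\beta}$, $D^2\varphi(0)=I$, $\det D^2\varphi(0)=1$) and applying Theorem~\ref{thm-2+growth} (legitimate since $\mu>2$) upgrades the estimate to $|u(x)-\tfrac{|x|^2}{2}|\le C|x|^{2+\alpha}$ near $0$. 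Finally, for any sequence $x^{(l)}\to0$ I would rescale at $\lambda_l=|x^{(l)}|$: the data of $u_{\lambda_l}$ converge to the model problem, and $D^2u_{\lambda_l}(x^{(l)}/\lambda_l)\to I$ — by the interior estimates of \cite{Caffarelli1990-2}, the boundary estimates of \cite{Savin2013} and Theorem~\ref{C2-cd2} when $x^{(l)}/\lambda_l$ stays off $\Gamma_{n-3}(V)$, and by the inductive estimate otherwise; hence $D^2u(x^{(l)})\to D^2\varphi(0)$, so $u\in C^2$ at $0$ with a modulus of continuity of $D^2u$ depending only on the listed data. Together with the lower strata this yields $u\in C^2(\overline\Omega\cap B_1(0))$.

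The hard part is making the induction scale-invariant: every auxiliary estimate invoked near a lower-dimensional face must have a modulus of continuity depending only on the data in the statement and not on the distance to $0$, so that it survives passage to the limit under the rescaling $w\mapsto w_\lambda$ — here one uses that $\|1/f_\lambda\|_{L^\infty}$, the $C^\beta$-norms of $f_\lambda$ and $D^2\varphi_\lambda$, and the modulus of continuity of $D^2\underline u_\lambda$ are all non-increasing in $\lambda\le1$. The most delicate stratum is $\Gamma_{n-2}(V)\setminus\Gamma_{n-3}(V)$ of the blow-up cone, where only $C^2$ (not $C^{2,\alpha}$) control is available in general; there one exploits that along this stratum the normalized right-hand side tends to $1$ as one approaches $0$ (by (C1)), so that the codimension-two ``large root'' of Lemma~\ref{cd2-pC2-1} degenerates and the required continuity is recovered uniformly.
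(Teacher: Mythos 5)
Your overall strategy matches the paper's: blow up at the vertex, use the cone Liouville theorem (Theorem~\ref{thm-liou1} together with Remarks~\ref{rem-liou-growth} and~\ref{emk-liouville1}) to identify the limit with $\tfrac{|x|^2}{2}$, and show $D^2u(x^{(l)})\to D^2\varphi(0)$ by transferring the $C^2$ information from the blow-up limit. The necessity argument is also the same as the paper's. However there is a genuine gap in your sufficiency step, precisely at the place you flag as ``the hard part.''

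The issue is how you transfer $u_{\lambda_l}\to\tfrac{|x|^2}{2}$ into a statement about $D^2u_{\lambda_l}(x^{(l)}/\lambda_l)$ when the rescaled evaluation point $y^{(l)}=x^{(l)}/\lambda_l$ approaches a codimension-$j$ stratum of the blow-up cone $V$ with $j\ge 2$. You invoke Theorem~\ref{C2-cd2} for $j=2$ and ``the inductive estimate'' for $j\ge 3$. But Theorem~\ref{C2-cd2} is a purely qualitative $C^2$ statement; applied to each $u_{\lambda_l}$ it says $u_{\lambda_l}$ is $C^2$ up to the edge, but gives no modulus of continuity uniform in $l$. Propositions~\ref{ modulusofcontinuity1} and~\ref{ modulusofcontinuity2} do give a uniform modulus, but only under the additional hypotheses (C4) or (C5), which Theorem~\ref{thm-c2-cdk} does not assume. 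Likewise, the inductive hypothesis would give a modulus of continuity of $D^2u_{\lambda_l}$ at a codimension-$j$ vertex of $V$, but to extract information at $y^{(l)}$, which merely approaches that vertex, one again needs uniformity in $l$ over a neighborhood rather than only at the vertex itself. Your closing remark that the right-hand side of the equation ``tends to $1$ so the large root degenerates'' is the right intuition for why the obstruction disappears, but it is not itself an estimate.

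The paper avoids this uniformity problem altogether by a second, and then recursive, rescaling. If $y^{(l)}\to\bar y$ lies on a lower-dimensional face, project $y^{(l)}$ onto that face to get $\bar y^{(l)}$ and rescale once more by $\tilde\lambda_l=|y^{(l)}-\bar y^{(l)}|$. The twice-rescaled functions converge (by the interior estimates of Caffarelli and the boundary estimates of Savin, applied at distance $1$ from the new face) to a solution of the model problem in a lower-codimension cone ($V_\mu\times\mathbb R^{n-2}$ or $\mathbb V_{k-l}\times\mathbb R^{n-(k-l)}$), with the sub-solution bound $v\ge\tfrac{|x|^2}{2}$ inherited from the original $\underline u$ and the angle condition inherited from (C2). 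The Liouville theorem then forces this limit to be $\tfrac{|x|^2}{2}$ as well, and crucially the rescaled evaluation point $\bar z^{(l)}$ now sits at distance $1$ from the boundary of the new cone, where the $C^{2,\beta}$ estimates of Caffarelli/Savin apply directly. Thus one never needs a uniform-in-$l$ corner estimate; each step is reduced to a clean Liouville rigidity plus interior/boundary regularity at scale $1$. To close the gap in your proposal, replace the appeal to Theorem~\ref{C2-cd2} and ``the inductive estimate'' by this secondary rescaling argument.

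Two further, smaller, points. First, after the affine normalization $D^2\varphi(0)=I$, a dihedral angle of $V$ may equal $\tfrac{\pi}{2}$ exactly (condition (C2) allows this), so in the secondary blow-up you may land in $V_{1/2}\times\mathbb R^{n-2}=(\mathbb R_+)^2\times\mathbb R^{n-2}$, for which the relevant Liouville statement is Theorem~\ref{thm-liou2} rather than Theorem~\ref{thm-liou1}. Second, the detour through Theorem~\ref{thm-2+growth} to obtain $|u-\tfrac{|x|^2}{2}|\le C|x|^{2+\alpha}$ is not needed for the $C^2$ conclusion of Theorem~\ref{thm-c2-cdk} and is not part of the paper's proof here; the quantitative modulus asserted in the statement follows instead from the standard compactness/contradiction argument once the qualitative blow-up is under control.
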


\begin{proof}
Step 1. \eqref{C2-ns-c} is necessary.

The first part of \eqref{C2-ns-c} is a compatible condition. We only need to consider the second part of \eqref{C2-ns-c}. Suppose this part is violated at $x=0$.  Up to an affine transformation,  we assume $D^2 \varphi(0)=I_{n\times n}$. Suppose that there exist two adjacent $(n-1)$ faces with dihedral angle $\theta=\mu\pi$, $\mu>\frac 12$, i.e., a subset of $V_{\mu}\times \mathbb R^{n-2}$ with $\mu>\frac{1}{2}$.
Suppose  $u$ is $C^2$ near $0$, then
 \begin{equation*}
 (u \circ A_{\mu}^{-1})_{12}(0)=(\underline{u} \circ A_{\mu}^{-1})_{12}(0)<0,
 \end{equation*}
 where $A_{\mu}$ is given by \eqref{def-Amu}.
 However, by Lemma \ref{cd2-pC2-1} and Remark \ref{rem-face-2-a}-\ref{rem-face-2-b},  there holds
 $$(u \circ A_{\mu}^{-1})_{12}(0)=\lim_{x''\rightarrow 0} (u \circ A_{\mu}^{-1})_{12}(0',x'')\ge 0.$$
 This yields a contradiction.

Step 2. \eqref{C2-ns-c} is sufficient.

It suffices to prove $u$ is $C^2$ up to $x=0$. By taking an affine transformation,  we can assume
\begin{equation*}
D^2\varphi(0)=I_{n\times n},\	\varphi(0)=|\nabla \varphi|=0,\quad \Theta(\varphi,0,\mathbb V_k)\le \frac{\pi}2.
\end{equation*}
Consider a sequence of points $\{x^{(m)}\}\in \big(\mathbb V_k \times [-3,3]^{n-k}\big)\bigcap B_{\frac{1}{4}}(0)$.
Claim:
\begin{equation*}
D^2 u(x^{(m)})\rightarrow I_{n\times n},\quad x^{(m)}\rightarrow 0.
\end{equation*}
Denote $x=(x',x'')$ where $x'=(x_1,\cdots,x_k)$, $x''=(x_{k+1},\cdots,x_n)$.
Consider
\begin{equation*}
u^{(m)}(x)=\frac{(u-\ell^{(m)})(\lambda_m x',\lambda_m x''+(x^{(m)})'')}{\lambda_m^2},\quad \lambda_m=|(x^{(m)})'|,
\end{equation*}
where $\ell^{(m)}(x)$ is the supporting plane of $u$ at $(0',(x^{(m)})'')$.
 Up to a subsequence, one assumes
\begin{equation*}
y^{(m)}=\frac{(x^{(m)})'}{\lambda_m}\rightarrow \bar y\in \mathbb S^{k-1}\cap \overline{\mathbb V_k}.
\end{equation*}
\begin{itemize}
	\item[(1).] $\bar y\in (\overline {\mathbb V_k}\backslash \Gamma_{k-2}(\mathbb V_k))\cap \mathbb S^{k-1}$.
	Boundary estimate(Theorem 1.1,\cite{Savin2013}) and interior estimate(Theorem 2,\cite{Caffarelli1990-2}) imply that
	$$u^{(m)}\rightarrow v,\text{ locally uniformly in } C^2_{loc}((\overline {\mathbb V_k}\backslash \Gamma_{k-2}(\mathbb V_k))\times \mathbb R^{n-k}),$$
where $v$ solves
\begin{equation*}
\begin{split}
\det D^2 v  &=1\quad \text{in } \mathbb V_{k}\times \mathbb R ^{n-k} ,\\
v&=\frac{|x|^2}{2}\quad\text{on }\partial ( \mathbb V_{k}\times \mathbb R^{n-k}).\\
v&\geq \frac{|x|^2}{2}\quad\text{in } \mathbb V_{k}\times \mathbb R^{n-k}.
\end{split}
\end{equation*}
Hence, by Theorem \ref{thm-liou1}, Remark \ref{rem-liou-growth} and Remark \ref{emk-liouville1}, we obtain $v=\frac{|x|^2}{2}$. This implies
\begin{equation*}
u_{ij}(x^{(m)})=u^{(m)}_{ij}\left(y^{(m)},0''\right)\rightarrow \delta_{ij},\text{ as }m\rightarrow +\infty.
\end{equation*}
\item[(2).] $\bar y\in (\Gamma_{k-2}(\mathbb V_k)\backslash \Gamma_{k-3}(\mathbb V_k))\cap \mathbb S^{k-1}$. Without loss of generality, we assume $\bar y_3,\cdots,\bar y_{k}>0$, $\bar y_1=\bar y_2=0$. Denote
\begin{equation*}
\bar y^{(m)}=(0,0,y^{(m)}_3,\cdots,y^{(m)}_k,0''),\quad \tilde \lambda_m=\sqrt{(y^{(m)}_1)^2+(y^{(m)}_2)^2}.
\end{equation*}
Up to a subsequence, one assumes
\begin{equation*}
\bar z^{(m)}=\left(\frac{y^{(m)}_1}{\tilde \lambda_m},\frac{y^{(m)}_2}{\tilde \lambda_m},0,\cdots,0\right)\rightarrow \bar z\in \mathbb S^1\times \mathbb R^{n-2}.
\end{equation*}
Define
\begin{equation*}
\tilde u^{(m)}(x)=\frac{(u^{(m)}-\tilde \ell^{(m)})(\tilde \lambda_m x+\bar y^{(m)})}{\tilde \lambda_m^2}
\end{equation*}
where $\tilde \ell^{(m)}$ is the supporting plane of $u^{(m)}$ at $\bar y^{(m)}$. Boundary estimate(Theorem 1.1,\cite{Savin2013}) and interior estimate(Theorem 2,\cite{Caffarelli1990-2}) imply that
	$$\tilde u^{(m)}\rightarrow v,\text{ locally uniformly in } C^2_{loc}((\overline { V_\mu}\backslash   \Gamma_{0} (V_\mu))\times \mathbb R^{n-2}),\quad \mu\in (0,1/2)$$
where $v$ solves
\begin{equation*}
\begin{split}
\det D^2 v  &=1\quad \text{in } V_{\mu}\times \mathbb R ^{n-2} ,\\
v&=\frac{|x|^2}{2}\quad\text{on }\partial ( V_{\mu}\times \mathbb R^{n-2}),\\
v&\geq \frac{|x|^2}{2}\quad\text{in } V_{\mu}\times \mathbb R^{n-2}.
\end{split}
\end{equation*}
Hence, by Theorem \ref{thm-liou1}, Remark \ref{rem-liou-growth} and Remark \ref{emk-liouville1}, we obtain $v=\frac{|x|^2}{2}$. This implies
\begin{equation*}
u_{ij}(x^{(m)})=u^{(m)}_{ij}\left(y^{(m)},0''\right)=\tilde u^{(m)}_{ij}(\bar z^{(m)})\rightarrow \delta_{ij},\text{ as }m\rightarrow +\infty.
\end{equation*}
\item[(3).] $\bar y\in (\Gamma_{l}(\mathbb V_k)\backslash \Gamma_{l-1}(\mathbb V_k))\cap \mathbb S^{k-1}$, $1\le l\le k-3$, $k\ge 4$. We may assume
\begin{equation*}
\bar y_{k-l+1},\cdots \bar y_{k}>0,\quad \bar y_{1}=\cdots=\bar y_{k-l}=0
\end{equation*}
Denote
\begin{equation*}
\bar y^{(m)}=(0,\cdots,0,y^{(m)}_{k-l+1},\cdots,y^{(m)}_k,0''),\quad \tilde \lambda_m=\sqrt{(y^{(m)}_1)^2+\cdots+(y^{(m)}_{k-l})^2}.
\end{equation*}
Up to a subsequence, one assumes
\begin{equation*}
\bar z^{(m)}=\left(\frac{y^{(m)}_1}{\tilde \lambda_m},\cdots,\frac{y^{(m)}_{k-l}}{\tilde \lambda_m},0,\cdots,0\right)\rightarrow \bar z\in \mathbb S^{k-l-1}\times \mathbb R^{n-(k-l)}.
\end{equation*}
Define
\begin{equation*}
\tilde u^{(m)}(x)=\frac{(u^{(m)}-\tilde \ell^{(m)})(\tilde \lambda_m x+\bar y^{(m)})}{\tilde \lambda_m^2}
\end{equation*}
where $\tilde \ell^{(m)}$ is the supporting plane of $u^{(m)}$ at $\bar y^{(m)}$.
$$\tilde u^{(m)}\rightarrow v,\text{ locally uniformly in } C^2_{loc}((\overline {\mathbb  V_{k-l}}\backslash \Gamma_{k-l-2}(\mathbb V_{k-l}))\times \mathbb R^{n-(k-l)}),$$
where $v$ solves
\begin{equation*}
\begin{split}
\det D^2 v  &=1\quad \text{in } \mathbb V_{k-l}\times \mathbb R ^{n-(k-l)} ,\\
v&=\frac{|x|^2}{2}\quad\text{on }\partial ( \mathbb V_{k-l}\times \mathbb R^{n-(k-l)}).\\
v&\geq \frac{|x|^2}{2}\quad\text{in } \mathbb V_{k-l}\times \mathbb R^{n-(k-l)}.
\end{split}
\end{equation*}
This reduces the situation from $\mathbb V_k\times \mathbb R^{n-k}$ to $\mathbb V_{k-l}\times \mathbb R^{n-(k-l)}$, $l\ge 1$. We can repeat the above arguments (1)-(3) to get the desired results.
\end{itemize}
This completes the proof of present theorem.
\end{proof}

It is evident that the property of the existence of a global $C^2$ function satisfying the $A-$condition itself imposes constraints on the geometry of the polytope $P$. To end this section, we prove that
the existence of a convex function $u\in C^2(\overline P)$ satisfying the $A-$ condition implies that the convex polytope $P$ is simple.

\begin{prop}\label{prop5.1:label}
Let $\Omega$ be a convex polytope in $\mathbb R^n$. If  there exists a convex function $u\in C^2(\overline{\Omega})$ satisfying the $A-$condition, then $\Omega$ must be simple. The converse is also true for $n=2,3$.
\end{prop}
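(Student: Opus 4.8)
The plan is to prove the two implications separately. First I would show that the existence of a convex $u\in C^2(\overline\Omega)$ satisfying the $A$-condition forces $\Omega$ to be simple. Fix a vertex $p$ of $\Omega$ and let $V$ be the tangent cone at $p$. After subtracting a linear function we may assume $u(p)=|\nabla u(p)|=0$, and after an affine transformation (which preserves both convexity and the class of polytopes, and which is exactly the normalization built into the definition of $\Theta,\theta$) we may assume $D^2u(p)=I_{n\times n}$, so that the relevant quantities are the honest Euclidean dihedral angles of $V$. Let $F_1,\dots,F_m$ be the $(n-1)$-faces of $\Omega$ through $p$, with outer unit normals $\nu_1,\dots,\nu_m$; since $p$ is a vertex, the $\nu_i$ span $\mathbb R^n$, so $m\ge n$. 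The dihedral angle between adjacent faces $F_i,F_j$ is $\pi-\angle(\nu_i,\nu_j)$, and the $A$-condition says every such angle is $\le \pi/2$, i.e. $\langle \nu_i,\nu_j\rangle\le 0$ for every adjacent pair. The key elementary fact I would invoke is: a collection of vectors in $\mathbb R^n$ that spans $\mathbb R^n$ and has pairwise non-positive inner products (among the "adjacent" pairs, which by the combinatorics of a vertex figure is enough to force it for a spanning subcollection) has cardinality at most $n$ unless the cone they cut out degenerates; more precisely, a pointed $n$-dimensional cone whose bounding hyperplanes meet pairwise at angles $\ge \pi/2$ must have exactly $n$ facets and be a simplicial cone, i.e. affinely equivalent to $(\mathbb R_+)^n$. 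Hence every vertex cone is simplicial, which is the definition of $\Omega$ being simple.

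The quantitative heart of that step is the linear-algebra lemma on obtuse cones. I would argue as follows: suppose $\nu_1,\dots,\nu_m$ are unit vectors spanning $\mathbb R^n$ with $\langle\nu_i,\nu_j\rangle\le 0$ for all $i\ne j$ in the adjacency graph of the vertex, and suppose $m\ge n+1$. Passing to a spanning subset of size $n+1$ (one shows the obtuseness is inherited along the edge-path structure of the polar polytope, or one simply invokes that in the polar cone "adjacent" facets of $\Omega$ correspond to edges), a classical fact (e.g. the proof that one cannot have $n+2$ pairwise-obtuse vectors in $\mathbb R^n$, and $n+1$ pairwise-obtuse vectors must be linearly dependent with a strictly positive dependence) yields $\sum c_i\nu_i=0$ with all $c_i>0$; but then for any $x$ in the open tangent cone $V=\{x:\langle x,\nu_i\rangle<0\}$ we get $0=\langle x,\sum c_i\nu_i\rangle<0$, a contradiction. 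So $m=n$ and the $\nu_i$ are linearly independent, whence $V$ is a simplicial cone and $\Omega$ is simple. (If one prefers to avoid the full polar combinatorics, one can instead use the standard Coxeter-type classification: a finite set of hyperplanes through a point with all pairwise dihedral angles in $[\pi/2,\pi)$ bounding a pointed cone must be mutually orthogonal after an affine change, which also gives simplicity and in fact a stronger rigidity.)

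For the converse when $n=2,3$, the task is, given a simple convex polytope $\Omega\subset\mathbb R^n$, to construct a convex $u\in C^2(\overline\Omega)$ whose $\Theta(u,x,\Omega)\le \pi/2$ at every $x\in\Gamma_{n-2}(\Omega)$. For $n=2$ this is easy: $\Gamma_0(\Omega)$ is the finite set of vertices, at each vertex $p$ with edge directions $e_1(p),e_2(p)$ one wants, after normalizing $D^2u(p)=I$, the angle between the two edges to be $\le\pi/2$; since a simple $2$-polytope has exactly two edges at each vertex, one chooses $D^2u(p)$ to be the positive-definite matrix making those two edge directions orthonormal, and then interpolates: take $u$ to be a finite sum $u=\sum_i a_i e^{\ell_i(x)}$ or a carefully chosen convex quadratic-plus-correction, or simply mollify the piecewise-quadratic that has the prescribed Hessian near each vertex — a partition-of-unity gluing of the local models $\tfrac12 x^T Q_p x$ with a large convex quadratic $N|x|^2$ added to preserve convexity works, since near each vertex only that vertex's model is active. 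For $n=3$ one does the same along each edge $E$ of $\Gamma_1(\Omega)$: simplicity means exactly two $2$-faces meet along $E$, and one prescribes $D^2u$ along $E$ so that, after the normalizing affine map, the two faces meet at angle $\le\pi/2$ (possible precisely because there are only two of them — three or more would be impossible, which is the obstruction the forward direction exploited), checking the prescriptions at the two endpoints of $E$ are mutually compatible and compatible with the vertex prescriptions, then glue by a partition of unity subordinate to neighborhoods of the edges and vertices, adding $N|x|^2$ to restore global convexity.

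The main obstacle is the converse construction for $n=3$: one must produce a single globally $C^2$ convex function whose prescribed second-order data at every point of every edge simultaneously satisfies the angle bound, and check that the per-edge and per-vertex prescriptions are consistent where edges meet vertices — this is where simplicity is used in an essential way (at a simple vertex the three edge-prescriptions are mutually compatible because each edge only imposes a two-face orthogonality-type condition, and a simplicial cone admits a linear map sending it to the positive orthant, whose standard quadratic $\tfrac12|x|^2$ then has all vertex dihedral angles equal to $\pi/2$). The forward direction, by contrast, is essentially the obtuse-cone linear algebra lemma and should be routine; I expect the bulk of the write-up to be the gluing argument and the verification that adding a large multiple of $|x|^2$ does not spoil the $A$-condition (it does not, since near $\Gamma_{n-2}$ the dominant term is the local model and the extra $N|x|^2$, having Hessian $2NI$, only rescales the normalization affine map without changing angles in the limit — though one must be slightly careful that it is the normalized Hessian, not the raw one, that enters $\Theta$, so a uniform lower bound on the local model's eigenvalues relative to $N$ is what is actually needed).
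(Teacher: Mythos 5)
Your proposal is structured sensibly (forward implication via the geometry of the vertex cone, converse via construction), and for the forward direction you take a genuinely different route from the paper: you argue directly on the facet normals using a ``pairwise--obtuse vectors have a positive dependence'' lemma, whereas the paper uses a dimensional-reduction argument, slicing the vertex cone by a hyperplane, observing (via the identity \eqref{aa-1}) that the sliced $(n-1)$-polytope has \emph{strictly} acute dihedral angles between adjacent faces, invoking the classical fact that a non-simplex polytope is non-simple or non-simplicial (Cromwell, p.~67), and descending until it reaches an $l$-gon, $l\ge 4$, with all angles strictly acute --- impossible. However, as written your forward argument has a real gap: the $A$-condition only tells you $\langle\nu_i,\nu_j\rangle\le 0$ for \emph{adjacent} facet pairs, and at a non-simple vertex the facet adjacency graph is not complete (e.g.\ a cone over a quadrilateral in $\mathbb R^3$ has a $4$-cycle adjacency graph, with opposite facets not adjacent). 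You acknowledge the gap with ``one shows the obtuseness is inherited along the edge-path structure of the polar polytope,'' but that is precisely the content that needs proving and is not obviously true; the linear-algebra lemma you invoke applies to vectors that are \emph{all} pairwise obtuse, not only along an adjacency cycle, and your passage to a ``spanning subset of size $n+1$'' does not automatically produce such a family. The paper's slicing step is exactly what supplies the missing implication (and also sharpens $\le 0$ to $<0$, which is needed for the final $l$-gon contradiction since your lemma requires strict obtuseness to guarantee a strictly positive null vector of the Gram matrix).

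The converse is the more delicate part, and here your partition-of-unity sketch has an issue you yourself flag but do not resolve: $\Theta$ is computed after normalizing $D^2u$ to the identity, so adding $N|x|^2$ \emph{does} change the relevant angles --- as $N\to\infty$ the normalized Hessian tends to a multiple of $I$ and the angle $\Theta$ tends to the raw Euclidean dihedral angle of the tangent cone, which can be $>\pi/2$. So ``adding a large convex quadratic to restore convexity'' is not compatible with preserving the $A$-condition; you would need the vertex models to dominate the added quadratic, but then convexity of the glued function is no longer automatic. The paper sidesteps this with a much more structured construction for $n=3$: it builds $u^{(0)}=\sum_i g(C_0\ell_{p_i^{(0)}}-P_i^{(0)})$ with a specific $C^2$ concave-profile function $g$ (equal to $-t$ near $0$ and constant away from $0$), so that near each vertex only its own term is active and $D^2u^{(0)}(p_i^{(0)})=D^2P_i^{(0)}$ exactly, with $P_i^{(0)}$ chosen to give a strictly acute normalized angle; convexity is then corrected only \emph{away} from $\Gamma_0(\Omega)$ via a cutoff $\chi$ vanishing near the vertices, so the Hessian at the vertices is untouched. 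It then upgrades this to a genuine $C^2(\overline\Omega)$ function by solving an auxiliary Monge--Amp\`ere problem and appealing to Theorem~\ref{mainthm1} and Lemma~\ref{cd2-pC2-1}. For $n=2$ the paper does not glue at all --- it simply solves $\det D^2u=1/2$ with boundary data $|x|^2/2$ and reads off the $A$-condition from $C^{2,\alpha}$ boundary regularity. In short: your high-level plan is reasonable, but both halves as sketched have gaps (non-adjacent obtuseness in the forward direction; interaction of the $N|x|^2$ correction with the normalized angle in the converse) that the paper avoids by taking structurally different routes.
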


\begin{proof}
Let $u\in C^2(\overline{\Omega})$ be such a convex function. Suppose $\Omega$ is not simple. Then, there exists a vertex of $\Omega$ such that it belongs to $F_i^{(0)}$, $i=1,\cdots,N$ where $F_i^{(0)}$ is an $(n-1)-$face of $\Omega$ and $N\ge n+1$.
Without loss of generality, we may assume this vertex is the origin $O$. By a rotation of the coordinates, we can make
\begin{equation*}
\Omega\subset\{x_n>0\},\quad \partial\Omega \cap \{x_n=0\}=O.
\end{equation*}
Denote the interior unit normal of $F_i^{(0)}$ by $\vec \nu_i^{(0)}$. Let
\begin{equation*}
\Omega^{(1)}=\Omega\cap \{x_n=\delta_1\},\quad F_i^{(1)}=F_i^{(0)}\cap \{x_n=\delta_1\}
\end{equation*}
for some $\delta_1>0$ small.  Denote the  the interior unit normal of $F_i^{(1)}$ relative to $\Omega^{(1)}$ by  $\vec \nu_i^{(1)}$(See Figure \ref{fig1}).
\begin{figure}[ht]
\begin{minipage}[t]{0.45\linewidth}
\centering
\includegraphics[width=7 cm]{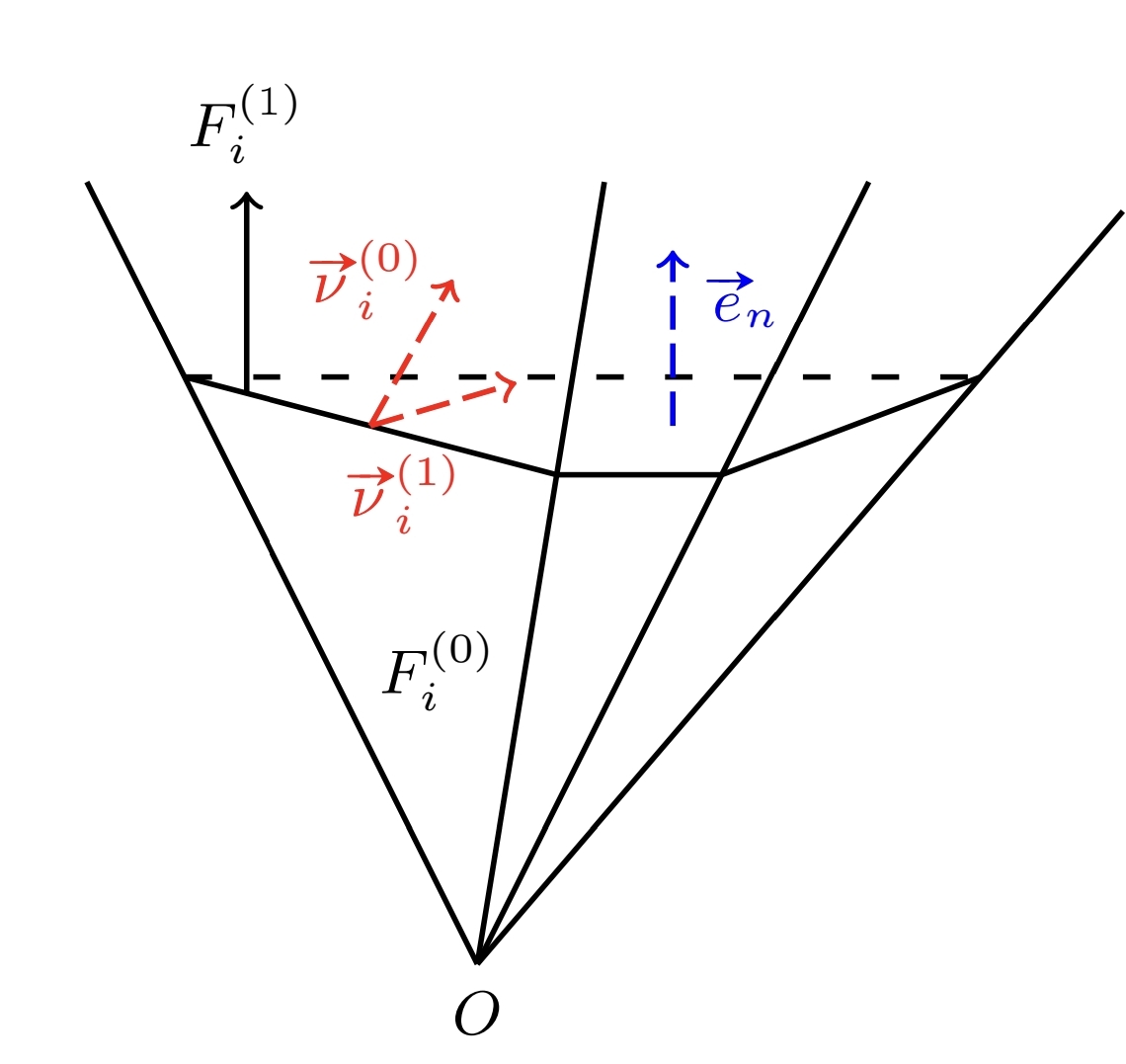}
\caption{}
\label{fig1}
\end{minipage}
\begin{minipage}[t]{0.45\linewidth}
\centering
\includegraphics[width=7 cm]{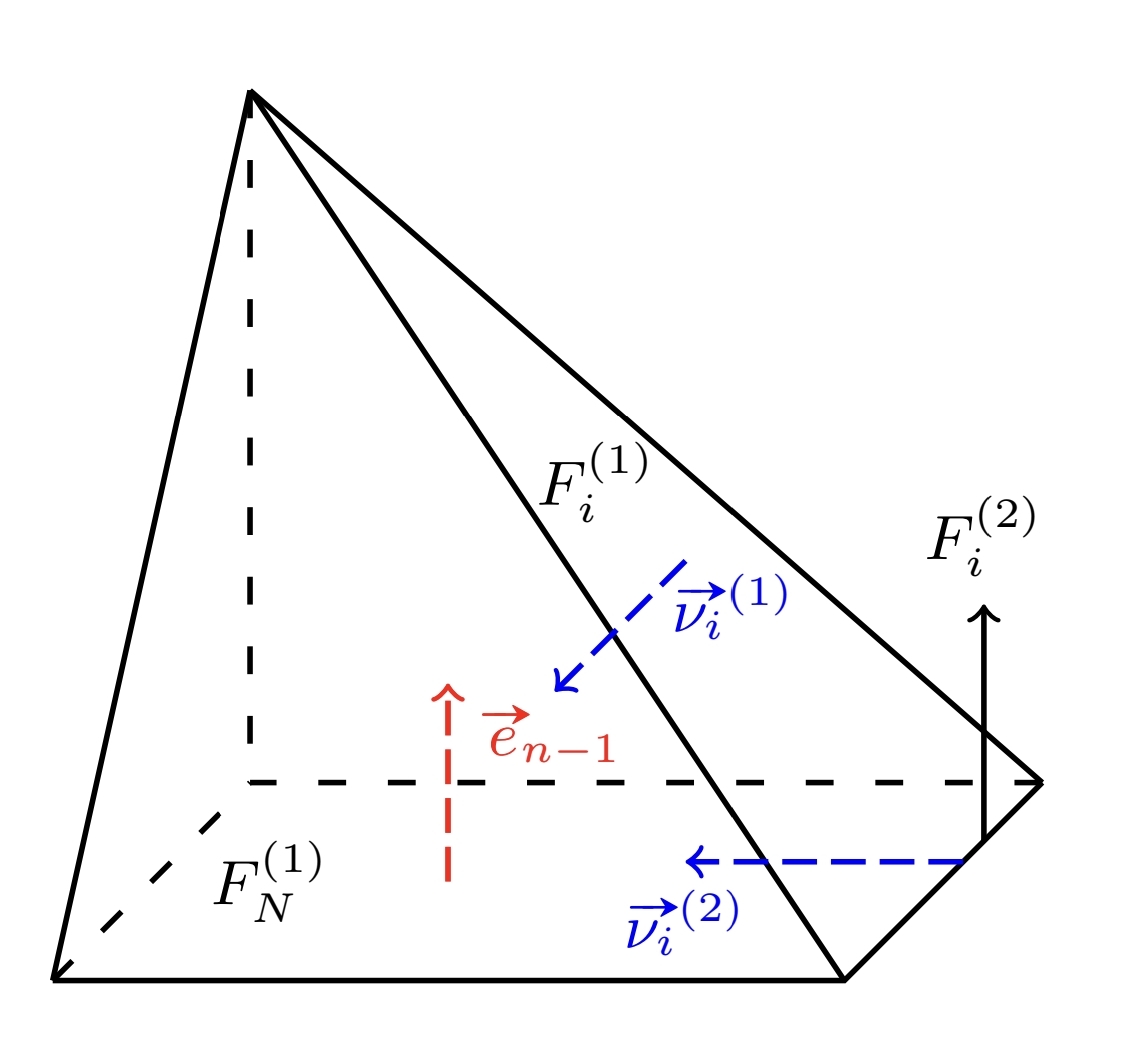}
\caption{}
\label{fig2}
\end{minipage}
\end{figure}
By our assumption, we may assume
\begin{equation*}
D^2 u(0)=I_{n\times n},\quad \vec \nu_i^{(0)}\cdot \vec \nu_j^{(0)}\le 0,\quad \text{if}\quad F_i^{(0)}\sim F_j^{(0)}.
\end{equation*}
Here and in the following, $F_i^{(0)}\sim F_j^{(0)}$ means $F_i^{(0)}$ is adjacent to $F_j^{(0)}$.
Make the following decomposition of $\nu_i^{(0)}$:
\begin{equation*}
\nu_i^{(0)}=(\nu_i^{(0)}\cdot \vec e_n)\vec e_n+(\nu_i^{(0)}\cdot \nu_i^{(1)})\nu_i^{(1)},\quad i=1,\cdots, N.
\end{equation*}
Since $\Omega$ is convex and is locally a graph near $0$, we have
\begin{equation*}
\nu_i^{(0)}\cdot \vec e_n>0,\quad \nu_i^{(0)}\cdot \nu_i^{(1)}>0,\quad i=1,\cdots,N.
\end{equation*}
Then, there holds
\begin{equation}\label{aa-1}
\nu_i^{(1)}\cdot \nu_j^{(1)}=\frac{\nu_i^{(0)}\cdot \nu_j^{(0)}-(\nu_i^{(0)}\cdot \vec e_n)\cdot (\nu_j^{(0)}\cdot \vec e_n)}{(\nu_i^{(0)}\cdot \nu_i^{(1)})(\nu_j^{(0)}\cdot \nu_j^{(1)})}<0,\quad\text{if}\quad F_i^{(1)}\sim F_j^{(1)}.
\end{equation}
In getting the above equality, we also used $F_i^{(0)}\sim F_j^{(0)}\iff F_i^{(1)}\sim F_j^{(1)}$.
\par From the above discussion, we obtain an  convex polytope $\Omega^{(1)}$ in $\mathbb R^{n-1}$ whose $(n-2)$-faces are $F_i^{(1)}$, $i=1,\cdots,N$, $N\ge n+1$. This implies that $\Omega^{(1)}$ is not a simplex in $\mathbb R^{n-1}$.  Then, either $\Omega^{(1)}$ is not simple or is not simplicial if $n\ge 4$(See P67,\cite{Cromwell1997polyhedra} for a proof).  If $\Omega^{(1)}$ is not simple, then we return to the previous discussion. If $\Omega^{(1)}$ is not simplicial.  Without loss of generality, we may assume $F_{N}^{(1)}=\partial \Omega^{(1)}\cap \{x_{n-1}=0\}$ is not a simplex. Let $F_i^{(1)}$ be the adjacent $(n-2)-$faces of $F_{N}^{(1)}$ and $F_i^{(2)}=F_i^{(1)}\cap F_N^{(1)}$, $i=1,\cdots,\widetilde N$. Denote the interior unit normal of $F_i^{(2)}$ relative to $F_N^{(1)}$ by  $\vec \nu_i^{(2)}$(See Figure \ref{fig2}). From \eqref{aa-1} and the convexity of $\Omega^{(1)}$, one has
\begin{equation*}
\vec \nu_i^{(1)}\cdot \vec \nu_j^{(1)}<0,\quad \vec \nu_i^{(1)}\cdot \vec e_{n-1}<0,\quad \vec \nu_i^{(1)}\cdot \vec \nu_i^{(2)}> 0,\quad i=1,\cdots,\widetilde N, \quad F_i^{(1)}\sim F_j^{(1)},
\end{equation*}
there holds
\begin{equation}\label{aa-2}
\nu_i^{(2)}\cdot \nu_j^{(2)}=\frac{\nu_i^{(1)}\cdot \nu_j^{(1)}-(\nu_i^{(1)}\cdot \vec e_{n-1})\cdot (\nu_j^{(1)}\cdot \vec e_{n-1})}{(\nu_i^{(2)}\cdot \nu_i^{(1)})(\nu_j^{(2)}\cdot \nu_j^{(1)})}<0,\quad F_i^{(2)}\sim F_j^{(2)}.
\end{equation}
Overall, we again obtain a convex polytope $\Omega^{(2)}=F_N^{(1)}$ in $\mathbb R^{n-2}$ which is not a simplex. Moreover, the dihedral angles of any two adjacent $(n-3)-$faces of $\Omega^{(2)}$  are acute.
\par Repeating the above procedure finite times, we will get an $l-$polygon in $\mathbb R^2$ with acute angles, $l\ge 4$. This is impossible and proves the ``only if'' part.
\par Now we show for $n=2,3$, the converse is also true. Let $\Omega$ be a simple convex polytope in $\mathbb R^n$. For $n=2$, consider the following boundary value problem
\begin{equation*}
\begin{split}
\det D^2 u& =\frac 12,\quad \text{in}\quad \Omega,\\
u&=\frac 12 |x|^2,\quad \text{on}\quad \partial\Omega.
\end{split}
\end{equation*}
Then, the solution $u\in C^{2,\alpha}(\overline\Omega)$ satisfies $A-$condition.
\par In the following, we focus on the case $n=3$. In this case, $\Gamma_0(\Omega)=\Gamma_{n-3}(\Omega)$ is a discrete point set.
Let $g(t)$ be a $C^2$ convex function satisfying
\begin{equation*}
\begin{cases}
g(t)=-t,\quad t\le \varepsilon_0,\\
g'(t)\le 0,\quad g''(t)\ge 0,\quad t\in (0,2\varepsilon_0),\quad g'(t)<0,\quad t<2\varepsilon_0,\\
g(t)= -\frac{5\varepsilon_0}{4},\quad t\ge 2\varepsilon_0.
\end{cases}
\end{equation*}
Let $p_i^{(0)}\in \Gamma_0(\Omega)$ and $\Omega\subset \{\ell_{p_i^{(0)}}(x)>0\}$, $\{\ell_{p_i^{(0)}}(x)=0\}\cap \overline\Omega=\{ p_i^{(0)}  \}$. Let $P_i^{(0)}$ be a uniformly  convex quadratic polynomial such that
\begin{equation*}
P_i^{(0)}(p_i^{(0)})=|\nabla P_i^{(0)}(p_i^{(0)})|=0,\quad \Theta(P_i^{(0)},p_i^{(0)},\Omega)<\frac \pi 2.
\end{equation*}
Define
\begin{equation*}
u^{(0)}=\sum_{p_i^{(0)}\in \Gamma_0(\Omega)}g(C_0\ell_{p_i^{(0)}}-P_i^{(0)})
\end{equation*}
where $C_0>0$ is chosen such that
\begin{equation*}
[C_0\ell_{p_i^{(0)}}-P_i^{(0)}]|_{\overline{\Omega}}\ge 0,\quad [C_0\ell_{p_i^{(0)}}-P_i^{(0)}]|_{\overline{\Omega}\backslash B_{\delta_0}(p_i^{(0)})}\ge 2\varepsilon_0.
\end{equation*}
Since $\Theta(P_i^{(0)},p_i^{(0)},\Omega)$ is affine invariant, after an affine transformation, we may assume
\begin{equation*}
\begin{split}
p_i^{(0)}=0,\quad \Omega\cap B_r(0)\subseteq(\mathbb R_+)^3 \cap B_r(0),\quad \ell_{p_i^{(0)}}(x)=\sum_{i=1}^3 x_i,\quad P_i^{(0)}(x)=\frac 12 |x|^2,
\end{split}
\end{equation*} with $
\Theta(\frac 12 |x|^2, 0, \Omega)<\frac \pi 2$
for some $r>0$. Then, one has
\begin{equation*}
 g_{x_kx_l}(C_0\ell_{p_i^{(0)}}-P_i^{(0)})=g''(C_0-x_k)(C_0-x_l)-g'\delta_{kl}.
\end{equation*}
This implies
\begin{equation*}
\Theta(u^0,p_i^{(0)},\Omega)<\frac \pi 2,\quad \forall p_i^{(0)}\in  \Gamma_{0}(\Omega).
\end{equation*}
This implies $u^{(0)}$ satisfies strong $A-$condition at $\Gamma_0(\Omega)$.
Let
\begin{equation*}
\hat u=u^0+\lambda_0 \chi |x|^2
\end{equation*}
where $\lambda_0>0$ is small and $\chi$ is a cut-off function such that
\begin{equation*}
\chi=0,\quad \text{near}\quad \Gamma_0(\Omega),\quad \chi\equiv 1,\quad \text{away from }\Gamma_0(\Omega).
\end{equation*}

Let $\underline u$ be the solution of the following equation:
\begin{equation*}
\begin{split}
&\det D^2 \underline u=\det D^2 \hat u-\delta \chi,\quad \text{in}\quad \Omega,\\
&\underline u=\varphi,\quad \text{on}\quad \partial\Omega
\end{split}
\end{equation*}
for some $\delta>0$ small enough. Then, by Theorem \ref{mainthm1} and Lemma \ref{cd2-pC2-1}, one knows $\underline u$ is the desired function for $n=3$.
\end{proof}

As a consequence, we have the following corollary.
\begin{cor}\label{cor-notsimple-3d}
Let $\Omega\subseteq\mathbb R^n$ be a bounded convex polytope and $P$ is not simple.
Suppose that  \eqref{intro-1} admits a globally $C^2$, convex, sub-solution $\underline u\in C^2(\overline{P})$. In addition, $f,\varphi$ satisfy condition \eqref{cond-f-varphi}. Then, \eqref{intro-1} admits a convex solution $u\in C^2(\overline{\Omega})$ if and only if $u=\underline u$.
\end{cor}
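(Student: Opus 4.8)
The plan is to argue by contradiction: a hypothetical $C^2$ convex solution $u\neq\underline u$ will be shown to satisfy the $A$-condition at a non-simple vertex of $\Omega$, which the proof of Proposition \ref{prop5.1:label} rules out. Since every convex polygon is simple, we may assume $n\geq 3$.

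Let $u\in C^2(\overline\Omega)$ be a convex solution of \eqref{intro-1}. Since $\det D^2u=f\geq\min_{\overline\Omega}f>0$ and $D^2u$ is continuous and nonnegative, $D^2u>0$ on $\overline\Omega$, so $u$ is uniformly convex and $\Theta(u,x,\Omega)$ is well defined on $\Gamma_{n-2}(\Omega)$; the same applies to $\underline u$. By the comparison principle $u\geq\underline u$ in $\Omega$, and the strong maximum principle applied to the linear equation satisfied by $u-\underline u$ gives the dichotomy: either $u\equiv\underline u$ in $\overline\Omega$ (the desired conclusion), or $u>\underline u$ in $\Omega$. Assume the latter.

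I would first use Lemma \ref{cd2-pC2-1} on the open part of the $(n-2)$-skeleton. Fix $p\in\Gamma_{n-2}(\Omega)\setminus\Gamma_{n-3}(\Omega)$. Near $p$ the polytope is, up to an affine map, a two-dimensional wedge times $\mathbb R^{n-2}$, so after subtracting a linear function and performing the affine and orthogonal normalizations of Remark \ref{rem-face-2-a} together with a rescaling, we are exactly in the setting of Lemma \ref{cd2-pC2-1}, with $\underline u$ normalized to satisfy \eqref{subu-expansion-1}; this works irrespective of whether $\underline u$ itself satisfies the $A$-condition at $p$, since Lemma \ref{cd2-pC2-1} covers the full range $f(0)\in(0,1]$. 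Because $u>\underline u$, the lemma forces the expansion \eqref{u-expansion-1} for $u$. Reading off $D^2u(p)$ from \eqref{u-expansion-1} and renormalizing it to the identity via the matrices $A_\nu$ of \eqref{def-Amu}, the tangent cone of $\Omega$ at $p$ becomes $V_\nu\times\mathbb R^{n-2}$ with $\cos(\nu\pi)=\sqrt{1-f(0)}\geq0$; hence $\Theta(u,p,\Omega)=\nu\pi\leq\pi/2$. Thus $u$ satisfies the $A$-condition at every point of $\Gamma_{n-2}(\Omega)\setminus\Gamma_{n-3}(\Omega)$.

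It remains to propagate this to $\Gamma_{n-3}(\Omega)$, which I would do by the continuity argument of Step 1 of the proof of Theorem \ref{thm-c2-cdk}. Pick a non-simple vertex $O\in\Gamma_0(\Omega)\subseteq\Gamma_{n-3}(\Omega)$ and normalize $D^2u(O)=I$. For adjacent $(n-1)$-faces $F_i,F_j$ of $\Omega$ through $O$, the $(n-2)$-face $E_{ij}=F_i\cap F_j$ has $O$ in its closure and relative-interior points $p\to O$; since $F_i,F_j$ lie in fixed hyperplanes and $D^2u$ is continuous, the dihedral angle of $F_i,F_j$ in the inner product $D^2u(p)$ — which equals $\Theta(u,p,\Omega)$ there — tends to the dihedral angle of $F_i,F_j$ in the inner product $D^2u(O)=I$. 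By the previous step this limit is $\leq\pi/2$, so every dihedral angle of the tangent cone of $\Omega$ at $O$ is $\leq\pi/2$, i.e. $\Theta(u,O,\Omega)\leq\pi/2$, so $u$ satisfies the $A$-condition at the non-simple vertex $O$. But the proof of Proposition \ref{prop5.1:label}, applied verbatim to the uniformly convex function $u$ at $O$, then slices $\Omega$ down dimension by dimension and produces an $l$-gon with $l\geq4$ all of whose angles are acute — impossible. Hence $u\equiv\underline u$, and then $\det D^2\underline u=f$, so $\underline u$ is itself the solution; conversely, if $\det D^2\underline u\equiv f$ then $u=\underline u\in C^2(\overline\Omega)$ obviously solves \eqref{intro-1}. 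I expect the last continuity step — extending the normalized dihedral angles from $\Gamma_{n-2}\setminus\Gamma_{n-3}$ up to $\Gamma_{n-3}$ — to be the only genuinely delicate point, and it is precisely the one already carried out in the proof of Theorem \ref{thm-c2-cdk}.
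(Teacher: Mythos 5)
Your proof is correct and is exactly the argument the paper intends (it states the corollary merely as ``a consequence'' of Proposition \ref{prop5.1:label}, leaving the deduction to the reader): the dichotomy $u\equiv\underline u$ or $u>\underline u$ via the strong maximum principle, the rigidity from Lemma \ref{cd2-pC2-1} giving $\Theta(u,p,\Omega)=\arccos\bigl(\sqrt{1-f(p)}\bigr)\le\pi/2$ on $\Gamma_{n-2}\setminus\Gamma_{n-3}$, continuity of $D^2u$ to push the bound to the non-simple vertex (the same continuity used in Step~1 of Theorem \ref{thm-c2-cdk}), and then the slicing argument in the proof of Proposition \ref{prop5.1:label}. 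No gaps.
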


\section{$C^{2,\alpha}$ regularity up to $\partial P$}\label{sec-C2alpha}
In this section, we study the $C^{2,\alpha}$ regularity of solutions to \eqref{intro-1} in bounded convex polytopes and prove Theorem \ref{mainthm2}.

{\bf Proof of Theorem \ref{mainthm2}.}
\par Step 1. {\it $\forall K\subset\subset \Gamma_{n-2}(\Omega)\backslash\Gamma_{n-3}(\Omega)$, $u$ is $C^{2,\alpha}$ in a neighbourhood of $K$. } Assume $0\in K$ and $u(0)=|\nabla u(0)|=0$. After an affine transformation, we can assume that
\begin{equation*}
\Omega\cap B_{\rho}(0)=((\mathbb R_+)^2\times \mathbb R^{n-2})\cap B_{\rho}(0),
\end{equation*}
and also
\begin{equation*}
u(x)=P_0(x)+o(|x|^2), \text{ in } \Omega\cap B_{\rho}(0),
\end{equation*}
where $P_0(x)$ is a second order homogeneous polynomial.
From Theorem \ref{thm-2+growth}, we know
\begin{equation*}
|u(x)-P_0(x)|\le \sigma |x|^{2+\alpha}, \text{ in } \Omega\cap B_{\rho}(0)
\end{equation*}
for some $\sigma>0,\alpha\in (0,\beta]$ and small $\rho>0$.
Denote
\begin{equation*}
u_r(x)=\frac{u(rx)}{r^2},\quad f_r(x)=f(rx),\quad \varphi_r(x)=\frac{\varphi(rx)}{r^2},\quad r\in (0, \rho/4).
\end{equation*}
Then, there holds
\begin{equation*}
\|u_r-P_0\|_{L^\infty(
([0,4]^{2}\setminus [0,1]^{2} )     \times (-4,4)^{n-2})}+\|\varphi_r-P_0\|_{C^{2,\alpha}([-4,4]^{n})}\le \sigma r^{\alpha}
\end{equation*}
for some positive constant $\sigma>0$.
Notice that $\|u_r(x)\|_{C^{2,\alpha}(([0,4]^{2}\setminus [0,1)^{2} )   \times [-4,4]^{n-2})}\le C_0.$ Hence, $u_r$ solves the following uniformly elliptic equation with H\"older continuous coefficients
\begin{equation*}
\det D^2 u_r-\det D^2 P_0=f(rx)-f(0)=O(r^\alpha), \text{ in }  ([0,4]^{2}\setminus [0,1)^{2} )  \times (-4,4)^{n-2}.
\end{equation*}
By Schauder estimate for linear uniformly elliptic equation, one has
\begin{equation*}
\|u_r-P_0\|_{C^{2,\alpha}( ([0,3]^{2}\setminus [0,2)^{2} )    \times [-3,3]^{n-2})}\le C\sigma r^{\alpha}.
\end{equation*}
We can do the same arguments for any point in $K$. This implies $u$ is $C^{2,\alpha}$ in a neighbourhood of $K$.
\par Step 2. {\it $\forall K\subset\subset \Gamma_{n-k}(\Omega)\backslash\Gamma_{n-k-1}(\Omega)$, $u$ is $C^{2,\alpha'}$ in a neighbourhood of $K$, $3\le k\le n$.}
After an affine transformation, we can assume that
\begin{equation*}
\Omega\cap B_{\rho}(0)=((\mathbb R_+)^k\times \mathbb R^{n-k})\cap B_{\rho}(0).
\end{equation*}
Following the same notations as in Step 1, by  Theorem \ref{thm-2+growth}, we also know
\begin{equation*}
|u(x)-P_0(x)|\le \sigma |x|^{2+\alpha}, \text{ in } \Omega\cap B_{\tilde \rho}(0).
\end{equation*}
Hence, there holds
\begin{equation*}
\|u_r-P_0\|_{L^\infty(((0,4)^{k}\backslash (0,1)^k)\times (-4,4)^{n-k})}+\|\varphi_r-P_0\|_{C^{2,\alpha}([-4,4]^{n})}\le \sigma r^{\alpha}.
\end{equation*}
We take $k=3$ to explain the strategy. Define
\begin{equation*}
Q_\delta=([0,\delta]^2\times [1,4])\cup ([0,\delta]\times [1,4]\times [0,\delta])\cup ([1,4]\times [0,\delta]^2).
\end{equation*}
Repeating the arguments in Step 1 yields that
\begin{equation*}
\|u_r\|_{C^{2,\alpha}(Q_\delta\times [-4,4]^{n-3})}\le C
\end{equation*}
for some $\delta>0$ small. Boundary estimate and interior estimate also give
 \begin{equation*}
 \|u_r\|_{C^{2,\alpha}(((0,4)^{k}\backslash ((0,1)^k \cup Q_\delta))\times [-4,4]^{n-3})}\le C.
 \end{equation*}
 By Lemma \ref{lemma-interpolation}, one knows
 \begin{equation*}
 \|u_r-P_0\|_{C^{2,\frac{\alpha}2}(((0,4]^{3}\backslash (0,1)^3)\times [-4,4]^{n-3})}\le \widetilde Cr^{\frac{\alpha^3}{2(1+\alpha)^{2}}}
 \end{equation*}
We can do the same arguments for any point in $K$. This implies $u$ is $C^{2,\alpha'}$ in a neighbourhood of $K$ for $\alpha'=\frac{\alpha^3}{2(1+\alpha)^{2}}$.
\par For general $3\le k\le n$, it is enough to repeat the above arguments finite times. This ends the proof of Theorem \ref{mainthm2}.
\qed

\smallskip
\smallskip


\begin{lemma}\label{lem-expansion}
For $\mu\in(\frac{1}{3},\frac{1}{2})$, let $\Omega=\big(V_{\mu}\cap B_{2}((0,0))\big)\times (-2,2)$ and
$u\in C^{2,\alpha} (\overline{\Omega}),\alpha\in(0,1)$ be a solution of
\begin{equation}
\begin{split}
&\det D^2 u=1\quad \text{in}\quad \Omega,\\
&u=\frac{|x|^2}{2} \quad \text{on}\quad \partial\Omega\cap \partial(V_{\mu}\times\mathbb{R})
\end{split}
\end{equation}
satisfying
\begin{equation}\label{D2u-x3}
D^2u=I_{3\times 3} \quad\quad \text{on    }\partial\Omega\cap\{x|x_1=x_2=0\}.
\end{equation}
Then, for $x_3\in[-3/2,3/2]$, there exists a function $c(x_3)$ such that
\begin{equation}\label{u-expansion1}
\left|u-\frac{|x|^2}{2}-c(x_3)r'^{\frac{1}{\mu}}\sin\frac{\theta'}{\mu}\right|\leq   C r'^3,
\end{equation}
for some positive constant $C>0$ depending only on $\mu$, $\alpha$ and $\|u\|_{C^{2,\alpha}(\overline{\Omega} ) }$. Here $(r',\theta')$ is the polar coordinate of  $(x_1,x_2)$.
Moreover, if $c(x_3)\equiv 0$ for $x_3\in[-3/2,3/2]$, then there holds
\begin{equation}\label{u-expansion2}
|u_{ij}(x)-\delta_{ij}|\leq C r', \quad\quad \text{in    } \big( V_{\mu}\cap B_{\frac{1}{100}}((0,0))\big)\times [-1,1].
\end{equation}
\end{lemma}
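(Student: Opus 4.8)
The plan is to linearize $\det D^2u=1$ around $\tfrac12|x|^2$ and then peel off the leading boundary layer along the edge $E=\{x_1=x_2=0\}$ by a blow-up iteration. Set $w=u-\tfrac12|x|^2$; then $w=0$ on $\partial\Omega\cap\partial(V_\mu\times\R)$, and by \eqref{D2u-x3} together with $u\in C^{2,\alpha}(\overline\Omega)$ one has $D^2w=0$ on $E$, hence $|D^2w(x)|\le Cr'^{\alpha}$ and, by integration, $|\nabla w(x)|\le Cr'^{1+\alpha}$ and $|w(x)|\le Cr'^{2+\alpha}$. Writing $\det(I+D^2w)=1$ in the form $a^{ij}(x)\partial_{ij}w=0$ with $a^{ij}(x)=\int_0^1\mathrm{cof}^{ij}\big(I+tD^2w(x)\big)\,dt\in C^{\alpha}$ and $\|a^{ij}-\delta_{ij}\|_{L^\infty(\{r'<\rho\})}\le C\rho^{\alpha}$, the linearized operator degenerates to $\Delta$ as $x\to E$. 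The model profile is $h(x_1,x_2)=r'^{1/\mu}\sin(\theta'/\mu)$: it is harmonic on $\R^3$ (harmonic in $(x_1,x_2)$, independent of $x_3$), vanishes on $\partial V_\mu\times\R$, is homogeneous of degree $1/\mu\in(2,3)$ in $(x_1,x_2)$, and satisfies $\nabla h=D^2h=0$ on $E$ and $|a^{ij}\partial_{ij}h|\le Cr'^{1/\mu-2+\alpha}$.

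Fix $x_3^0\in[-3/2,3/2]$ and set $Q_\rho=(V_\mu\times\R)\cap\{r'<\rho,\ |x_3-x_3^0|<\rho\}$. I would build inductively constants $c_k=c_k(x_3^0)$, $c_0=0$, $r_k=\tau^k$, with
\[
\|w-c_kh\|_{L^\infty(Q_{r_k})}\le M r_k^{3},\qquad |c_{k+1}-c_k|\le M r_k^{\,3-1/\mu}.
\]
For the inductive step, with $M_k=\|w-c_kh\|_{L^\infty(Q_{r_k})}$, the rescalings $\hat E_k(y)=M_k^{-1}(w-c_kh)(r_ky',\,x_3^0+r_ky_3)$ vanish on the lateral faces, have $\nabla\hat E_k$ and $D^2\hat E_k$ vanishing on the rescaled edge, and solve equations converging to $\Delta\hat E_\infty=0$ (using the bounds above and on $|a^{ij}\partial_{ij}h|$). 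The limit $\hat E_\infty$ is harmonic on $V_\mu\times\R$, vanishes on $\partial V_\mu\times\R$, and has $\nabla\hat E_\infty=D^2\hat E_\infty=0$ on $E$; its asymptotics at $E$ only involve the exponents $\{\,j+k/\mu:\ j\ge0,\ k\ge1\,\}$ and integer polynomial degrees, and since $2\mu,3\mu\notin\Z$ for $\mu\in(\tfrac13,\tfrac12)$ the only one in $(2,3]$ is $1/\mu$, realized by $h$, the next relevant exponent being $>3$. Hence $\hat E_\infty$ agrees with a constant multiple of $h$ up to $O(r'^{3})$, which for $\tau$ small yields the improvement with $c_{k+1}=c_k+\gamma^{(k)}M_kr_k^{-1/\mu}$. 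Since $3-1/\mu>0$, $c_k(x_3^0)\to c(x_3^0)$, uniformly in $x_3^0$ (so $c$ is continuous), and combining the two estimates above on $Q_{r_k}$ gives \eqref{u-expansion1}.

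For the ``moreover'' part, assume $c\equiv0$, so \eqref{u-expansion1} reads $|w(x)|\le Cr'^3$ near $E$. Rescaling at an edge point, $w_\rho(y)=\rho^{-3}w(\rho y',\,x_3^0+\rho y_3)$ is bounded on $(V_\mu\cap\{\tfrac12<r'<2\})\times(-1,1)$, vanishes on the lateral faces, and solves a uniformly elliptic equation with $C^{\alpha}$ coefficients; interior estimates plus boundary estimates along the (smooth) lateral faces give $\|D^2w_\rho\|_{L^\infty}\le C$, i.e.\ $|D^2w(x)|\le C\rho\le Cr'$ at distance $\sim\rho$ from $E$. Letting $\rho$ range over dyadic scales gives \eqref{u-expansion2} on $(V_\mu\cap B_{1/100})\times[-1,1]$.

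The main obstacle is the inductive step, and two effects must be controlled uniformly in $x_3^0$. First, the coefficient of $h$ genuinely depends on $x_3$: iterating with constants leaves a defect of the form $(c_\infty(y_3)-c_\infty(0))h$, which at scale $\tau$ has size $O(r'^{\,1+1/\mu})=o(r'^3)$ exactly because $1/\mu>2$. Second, one must verify that the genuinely nonlinear part of the forcing $\sigma_2(D^2w)+\det D^2w$, together with the variable-coefficient error $(a^{ij}-\delta_{ij})\partial_{ij}w$, produces no correction of order below $r'^3$; here the a priori $C^{2,\alpha}$ control of $D^2w$ near $E$, together with the arithmetic of the admissible homogeneities for $\mu\in(\tfrac13,\tfrac12)$ (all competitors being $>3$), is what secures the clean exponent $3$.
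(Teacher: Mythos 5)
Your route is genuinely different from the paper's. The paper works at a fixed $x_3^0$ in the cylindrical variable $t=-\log|x|$, derives the decay $\|\tilde v\|_{C^{2,\alpha}([\tau-1,\tau+1]\times\Sigma)}\le Ce^{-(2+\alpha)\tau}$ from \eqref{D2u-x3}, and then invokes the asymptotic-expansion machinery of Theorem~2.1 in \cite{HuangShen2023} to isolate the $e^{-t/\mu}\phi_\Sigma$ mode with an $e^{-3t}$ remainder; you instead run a Campanato-type blow-up iteration that peels off $c_kh$ scale by scale, with compactness and a Liouville classification of the limit at each step. Your argument for the ``moreover'' part (rescale $w$ by $\rho^{-3}$ in an annulus around the edge and apply interior and lateral-boundary Schauder estimates) coincides with the paper's.

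There is, however, a gap in the inductive step exactly at the spot you call ``the main obstacle,'' and the exponent arithmetic you invoke does not close it. Writing the equation for $w-c_kh$ as $a^{ij}\partial_{ij}(w-c_kh)=-c_k\,(a^{ij}-\delta_{ij})\partial_{ij}h$ (from $a^{ij}\partial_{ij}w=0$ and $\Delta h=0$): if $c(x_3^0)\ne0$ then necessarily $|D^2w|\sim|c_k|\,r'^{\,1/\mu-2}$, hence $|a^{ij}-\delta_{ij}|\sim|c_k|\,r'^{\,1/\mu-2}$, while $|\partial_{ij}h|\sim r'^{\,1/\mu-2}$, so the right-hand side is of size $c_k^2\,r'^{\,2/\mu-4}$. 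After the rescaling $M_k^{-1}r_k^{2}(\cdot)$ with $M_k\sim r_k^3$, the forcing on $\hat E_k$ is of size $c_k^2\,r_k^{\,2/\mu-5}$, which \emph{diverges} as $r_k\to 0$ whenever $\mu>2/5$ --- precisely the regime that matters in Lemma~\ref{exm-sta:label}, where $\mu_k\to 1/2$. So the compactness step does not produce a harmonic $\hat E_\infty$, and the iteration does not close. Put differently, the list $\{\,j+k/\mu:\ j\ge 0,\ k\ge 1\,\}$, of which only $1/\mu$ lies in $(2,3]$, classifies only the \emph{harmonic} competitors; the quadratic source $\sigma_2(c\,D^2h)\sim c^2 r'^{\,2/\mu-4}$ generates a particular solution of order $r'^{\,2/\mu-2}$, and $2/\mu-2\in(2,3)$ once $\mu>2/5$. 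Your sentence ``all competitors being $>3$'' overlooks exactly this term. A complete proof of \eqref{u-expansion1} must either handle this correction explicitly (e.g.\ carry a particular corrector alongside $c_kh$) or show that it is absent; the case $c\equiv 0$, which is the one actually used to reach \eqref{u-expansion2}, does avoid the issue, since there $|D^2w|=O(r')$ makes the source $O(r'^{2})$ and the resulting corrector $O(r'^{4})$.
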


\begin{proof}
We first prove \eqref{u-expansion1}. It is enough to prove that
\begin{equation}\label{u-expansion3}
|u(x_1,x_2,0)-\frac{x_1^2+x_2^2}{2}-c(0)r'^{\frac{1}{\mu}}\sin\frac{\theta'}{\mu}|\leq C r'^3.
\end{equation}

Set $v=u-\frac{|x|^2}{2}$. Then
\begin{equation*}\Delta v= 1+\Delta v-\det(I_{3\times3}+ D^2 v):= F_0(v).\end{equation*}

For any $x\in\mathbb R^n\setminus\{0\}$,
set $(t,\theta)\in \mathbb R\times \mathbb S^{n-1}$ by
\begin{equation*}
t=-\ln|x|,\quad \theta=\frac{x}{|x|}.\end{equation*}
Denote $\partial^2_t+\Delta_{\theta}$ by $\widetilde{\Delta}$. In the following, we use the superscript $\	\widetilde{}\	$ to distinguish the representation of the same function in different coordinate systems.
Then,
\begin{equation}\label{eq-in-t}\widetilde{\Delta}\widetilde{v}=e^{-2t}\widetilde{F_0(v)  }.\end{equation}

Let $\Sigma=(V_{\mu}\times\mathbb{R})\cap\mathbb{S}^{2}$, $\lambda_1 (\Sigma)=\frac{1}{\mu}\in(2,3)$ be the first Dirichlet eigenvalue of $-\Delta_{\theta}$ on $\Sigma$ and  $\phi_\Sigma=r'^{\frac{1}{\mu}}\sin\frac{\theta'}{\mu}|_{\mathbb{S}^{2}}$ be the corresponding eigenfunction. Also, we know  $\lambda_2 (\Sigma)>3$.
\eqref{D2u-x3} and \eqref{eq-in-t} imply
\begin{equation*}
 \|\widetilde{v}\|_{C^{2, \alpha}([\tau-1,\tau+1)\times\Sigma)  }\leq Ce^{-(2+\alpha) \tau},\quad \tau\ge 10.
\end{equation*}

Following the proof of Theorem 2.1 in \cite{HuangShen2023}, one gets that there exists a constant  $c_1$ such that
\begin{equation}\label{v-expansion}
   \Big|\widetilde{v}-c_1  e^{-\frac{1}{\mu}t}\phi_\Sigma\Big |
     \leq C e^{-3t}\quad\text{in } (100,\infty)\times\overline{\Sigma}.
\end{equation}
Hence,
\begin{equation*}
   \Big|v-c_1 |x|^{\frac{1}{\mu}} \phi_\Sigma \Big |
     \leq C |x|^3\quad\text{in } (100,\infty)\times\overline{\Sigma}.
\end{equation*}
This implies \eqref{u-expansion3} for $x_3=0$.

For any $x\in \big( V_{\mu}\cap B_{\frac{1}{100}}((0,0))\big)\times [-1,1]$, denote
\begin{equation*}
u_{\lambda}(y)=\frac{u((0,0,x_3)+\lambda y)-l_x(y)}{\lambda^2},\quad \varphi_{\lambda}(y)=\frac 12 |y|^2
\end{equation*}
where $\lambda=r'>0$ and $l_x(y)=u((0,0,x_3))+\lambda \nabla u((0,0,x_3))\cdot y$.
Then, $u_{\lambda}$ solves the following uniformly elliptic equation with $C^{\alpha}$ coefficients
\begin{equation*}
\det D^2 u_{\lambda}-\det D^2\varphi_{\lambda} =0, \text{ in }  \big( V_{\mu}\cap[ B_{2}((0,0))\setminus B_{1/2}((0,0))]\big)\times (-2,2).
\end{equation*}
By Schauder's estimate for linear uniformly elliptic equation, one has
\begin{equation*}
|\partial_{ij} u- \delta_{ij}| \le Cr'.
\end{equation*}
This ends the proof of present lemma.
\end{proof}

Now we can construct example to show that the condition (C3) in Theorem \ref{mainthm2} can't be dropped.
\begin{lemma}\label{exm-sta:label}
Let $\Omega=\{(x_1,x_2,x_3)|x_1>0,x_2>0,10>x_3>2(x_1+x_2)\}$ and
$\varphi=\frac{|x|^2}{2}+\lambda_0\chi x_1x_2$ where $\lambda_0>0$ is a small constant and $\chi$ is a cut-off function such that
\begin{equation*}
\chi=1,\quad \text{in}\quad B_{1/2}((0,0,10)), \quad \chi\equiv 0,\quad \text{in    } B_{1}^{c}((0,0,10)).
\end{equation*}
Then, there exists $f\in C^{0,1}(\overline{\Omega})$ satisfying $\det D^2 \varphi>f$ in $\Gamma_1(\Omega)\backslash \Gamma_0(\Omega)$ such that \eqref{intro-1} admits a solution $u\in C^2(\overline{\Omega})$ and $u\notin C^{2,\alpha}(\overline{\Omega})$ for any $\alpha\in (0,1)$.
\end{lemma}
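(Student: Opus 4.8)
The plan is to produce the counterexample in two steps: first choose $f$ so that Theorem~\ref{mainthm1} applies (giving a solution $u\in C^2(\overline\Omega)$) while the strong $A$-condition (C3) is violated, and then show this $u$ cannot be $C^{2,\alpha}$ for any $\alpha$. For the first step I would check that $\Omega$ (bounded by $\{x_1=0\},\{x_2=0\},\{x_3=10\},\{x_3=2(x_1+x_2)\}$, with four vertices each lying on exactly three faces) is a simple polytope, and then compute $\Theta(\varphi,\cdot,\Omega)$ at its vertices. Near $v:=(0,0,10)$ one has $\chi\equiv1$, so $\varphi=\frac{|x|^2}{2}+\lambda_0x_1x_2$ there and
\begin{equation*}
D^2\varphi(v)=\begin{pmatrix}1&\lambda_0&0\\ \lambda_0&1&0\\ 0&0&1\end{pmatrix}=:M,\qquad \det M=1-\lambda_0^2,
\end{equation*}
which is uniformly convex for $\lambda_0$ small. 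Since $M^{-1}$ has vanishing $(1,3)$ and $(2,3)$ entries, after the affine map normalizing $M$ to the identity the faces $\{x_1=0\}$ and $\{x_3=10\}$, and likewise $\{x_2=0\}$ and $\{x_3=10\}$, make dihedral angle $\frac\pi2$, while $\{x_1=0\}$ and $\{x_2=0\}$ make angle $\arccos\lambda_0<\frac\pi2$; hence $\Theta(\varphi,v,\Omega)=\frac\pi2$, so (C2) holds but (C3) fails at $v$ (a similar computation applies at the other vertices). I would then take $f:=\det D^2\varphi-\delta\psi$ with $\psi\in C^\infty(\overline\Omega)$, $\psi\ge0$, $\psi>0$ off $\Gamma_0(\Omega)$, vanishing to exactly first order on $\Gamma_0(\Omega)$, and $\delta>0$ small enough that $f>0$ and $\varphi$ stays uniformly convex. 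Then $\underline u:=\varphi$ is a $C^2$ convex subsolution, (C1) holds, and $\det D^2\varphi>f$ on $\Gamma_1(\Omega)\setminus\Gamma_0(\Omega)$, so Theorem~\ref{mainthm1} gives $u\in C^2(\overline\Omega)$; moreover $\det D^2u=f<\det D^2\underline u$ in $\Omega$ forces $u>\underline u$ in $\Omega$ by comparison.

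For the second step I would first record that along the edge $E:=\{x_1=0,\,x_3=10\}$ the compatibility identity (as in Lemma~\ref{cd2-pC2-1} and Remark~\ref{rem-face-2-b}), using $\varphi_{11}=\varphi_{22}=\varphi_{33}=1,\ \varphi_{12}=\lambda_0,\ \varphi_{13}=\varphi_{23}=0$ near $v$, gives for $(0,b,10)$ near $v$
\begin{equation*}
u_{13}(0,b,10)^2=\det D^2\varphi(0,b,10)-f(0,b,10)=\delta\,\psi(0,b,10),
\end{equation*}
so $u_{13}$ approaches $u_{13}(v)=0$ like $\sqrt{d(\cdot,v)}$; this already yields $u\notin C^{2,\alpha}(\overline\Omega)$ for $\alpha>\frac12$. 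To kill every $\alpha\in(0,1)$ the key is to zoom towards $v$: normalizing $D^2\varphi$ to the identity at a point of $E$ at distance $b$ from $v$, $u$ solves a Monge--Amp\`ere equation with right-hand side $1-c'b+o(b)$ on a $\frac\pi2$-wedge with quadratic Dirichlet data, so by Lemma~\ref{lemma-liou2} and Theorem~\ref{thm-2+growth} the second-derivative H\"older exponent available near that wedge is $\tfrac1{\mu_*(b)}-2$, with $\mu_*(b)=\tfrac1\pi\arccos\sqrt{c'b}\to\tfrac12$; that is, the local $C^{2,\gamma}$-exponent degenerates to $0$ as $b\to0$. Propagating this degeneration through the estimates near $E$ and at $v$ by a two-scale/iteration argument, I would conclude that the modulus of continuity of $D^2u$ at $v$ decays slower than any power, i.e. $[D^2u]_{C^\alpha(\overline\Omega\cap B_r(v))}=+\infty$ for every $\alpha\in(0,1)$ and every $r>0$, which together with the first step proves the lemma.

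The routine part is the first step. The hard part will be the quantitative claim in the second step: converting the heuristic ``the available $C^{2,\gamma}$-exponent for $u$ degenerates to $0$ as one approaches $v$'' into a genuine lower bound on the $C^\alpha$-seminorm of $D^2u$ near $v$. This requires using the expansion of Lemma~\ref{lem-expansion} and the cone estimate of Theorem~\ref{thm-2+growth} with all constants tracked uniformly in the distance $b$ to $v$ (equivalently, in the rate at which $\det D^2\varphi-f$ vanishes along $E$), and carefully matching the behavior in the transverse $\frac\pi2$-wedge at scale $b$ with the vertex behavior at scale $b$, so that the resulting modulus of $D^2u$ at $v$ indeed beats every $r^\alpha$.
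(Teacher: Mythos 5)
The first step of your plan is essentially sound (a different vertex than the paper uses, and a smooth choice of $f$, but these are cosmetic choices), and you correctly identify that the on-edge identity $u_{13}(0,b,10)^2=\det D^2\varphi(0,b,10)-f(0,b,10)$ only rules out $\alpha>\frac12$ by itself. The real problem is the second step, and the gap there is not merely "hard to carry out" — as written, the argument may simply be false. Your reasoning is: at scale $b$ from the vertex, the solution lives in a wedge with opening $\mu_*(b)\pi\to\frac\pi2$, so the \emph{available} $C^{2,\gamma}$ exponent is $\frac1{\mu_*(b)}-2\to0$, hence the modulus of continuity of $D^2u$ at $v$ "must" decay slower than any power. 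But a degenerating a priori exponent does not imply the solution actually saturates it. In the dyadic window at scale $b$, Lemma \ref{lem-expansion} gives $W=|x|^2/2+c(b)\,r'^{1/\mu_*(b)}\sin(\theta'/\mu_*(b))+O(r'^3)$, and when the anomalous coefficient $c(b)$ vanishes the same lemma gives $|D^2W-I|\le Cr'$, i.e.\ the solution is essentially $C^{2,1}$ at that scale. So with your single smooth choice of $f$, nothing prevents all the anomalous coefficients from vanishing, in which case $u$ could well be $C^{2,\alpha}$ for some small $\alpha$. This is exactly the obstacle the paper's proof is organized around.

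The paper resolves this with a dichotomy plus a perturbation. It works at the vertex $(0,0,0)$, takes $F(x)=G(x_3)$ with $G$ a carefully built $C^{0,1}$ sawtooth that is \emph{exactly constant} on each dyadic annulus (this is what makes the rescaled problems $\det D^2W_k=1$ precisely, so Lemma \ref{lem-expansion} applies as stated), extracts the expansion coefficients $c_k$, and then argues: either $c_k\ne0$ for infinitely many $k$ — in which case, since $1/\mu_k-2\to0$, $U\notin C^{2,\alpha}$ for every $\alpha$ — or $c_k\equiv0$ for all large $k$, in which case it passes to a strictly larger right-hand side $\tilde F=\tilde G(x_3)+\frac{x_1+x_2}{10}$ and uses a Hopf-type barrier inside the wedge to force $\tilde W_k\le W_k-\epsilon r'^{1/\mu_k}\sin(\theta'/\mu_k)$, i.e.\ $\tilde c_k\le-\epsilon<0$. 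The lemma then holds for $\tilde F$. Your proposal has neither the dichotomy nor the perturbation; it also uses a smooth $f$, which breaks the "exactly constant RHS at dyadic scales" hypothesis on which the expansion lemma rests, so even the expansion you invoke would need a nontrivial generalization. In short: the vague "propagating the degeneration by a two-scale iteration" is not a proof sketch but a hope, and the key idea that makes the paper's argument close — handling the case where the anomalous coefficients vanish by perturbing $f$ and comparing — is missing.
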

\begin{remark}\label{rem-a1:label}
It can be checked directly that  $\varphi$ satisfies the $A-$condition on $\Gamma_{0}(\Omega)$ and the strong $A-$condition on $\Gamma_{0}(\Omega)\setminus \{0\}$.
\end{remark}
\begin{proof}
For any $f\in C^{0,1}(\overline{\Omega})$ satisfying $0<f<\det D^2 \varphi$ in $\Omega$ and $f=\det D^2 \varphi$ on $\Gamma_0(\Omega)$, it follows from Theorem \ref{mainthm1} that there exists a solution $ u\in C^2(\overline{\Omega})$ solving \eqref{intro-1}.  By the proof of Theorem \ref{mainthm2} and Remark \ref{rem-a1:label}, for any compact set
$K\subseteq \overline{\Omega}\setminus \{0\}$, one also has $u\in C^{2,\alpha_K}(K)$ for some $\alpha_K\in (0,1)$ depending on $K$.

Now we begin the construction of our example.
Set
\begin{equation*}
g(t)=\begin{cases}
1-2t,\quad t\in [0,1/4],\\
1/2,\quad t\in [1/4,3/4],\\
2-2t,\quad t\in [3/4,1],
\end{cases}
\end{equation*}
and
\begin{equation*}
\widetilde{g}(t)=\begin{cases}
1-t,\quad t\in [0,1/2],\\
t,\quad t\in [1/2,3/4],\\
3-3t,\quad t\in [3/4,1].
\end{cases}
\end{equation*}
Then, $g\leq \widetilde{g}$ and $g(1/2)=\widetilde{g}(1/2)$.
Let $G\in C^{0,1}([0,1/2])$ be given by
\begin{equation*}
G(t)=\begin{cases}
1 ,\quad t=0,\\
1-\frac{1}{2^{k}}+\frac{1}{2^{k+1}}g\big(2^{k+1}(t-\frac{1}{2^{k+1}})\big),\	 t\in(\frac{1}{2^{k+1}},\frac{1}{2^{k}}], \	k\in \mathbb Z^+,
\end{cases}
\end{equation*}
and $\widetilde{G}\in C^{0,1}([0,1/2])$ be given by
\begin{equation*}
\widetilde{G}(t)=\begin{cases}
1,\quad t=0,\\
1-\frac{1}{2^{k}}+\frac{1}{2^{k+1}}\widetilde{g}\big(2^{k+1}(t-\frac{1}{2^{k+1}})\big),
\	 t\in(\frac{1}{2^{k+1}},\frac{1}{2^{k}}], \	k\in \mathbb Z^+.
\end{cases}
\end{equation*}
Then, $G\leq\widetilde{G}$ and $G(\frac{3}{2^{k+2}})=\widetilde{G}(\frac{3}{2^{k+2}})=1-\frac{3}{2^{k+2}}$ for any positive integer $k$.

Let $F\in C^{0,1}(\overline{\Omega})$ be a positive function satisfying:
\begin{equation*}
\begin{cases}
0<F< \det D^2 \varphi \quad \text{ in }\Omega, \\
F= \det D^2 \varphi\quad \text{ on each vertex of }\Omega,\\
F(x_1,x_2,x_3)=G(x_3) \quad \text{ near }0.
\end{cases}
\end{equation*}
Let $U\in C^2(\overline{\Omega})$ be the convex solution of
\begin{equation}\label{C2alpha-1}
\begin{split}
\det D^2 U=&F(x),\quad \text{in}\quad \Omega,\\
U=&\varphi(x),\quad \text{on}\quad \partial  \Omega.
\end{split}
\end{equation}
Then, $\partial_{12}U(0,0,x_3)=\sqrt{1-G(x_3)}=\frac{\sqrt{3}}{2^{k/2+1}}$,  $x_3\in [\frac{5}{2^{k+3}},\frac{7}{2^{k+3}}] $.

Set $\mu_k=\frac{\arccos \sqrt{1-(1-\frac{3}{2^{k+2}})} }{\pi}= \frac{\arccos\frac{ \sqrt{3}}{2^{k/2+1}} }{\pi}$. Then for $k$ large, $ \mu_k\in(1/3,1/2)$ and $\mu_k\rightarrow \frac{1}{2}$ as $k \rightarrow \infty$.
Consider
\begin{equation*}
W_k(x)=\frac{(U-\ell_k)\bigg(A_{\mu_k}^{-1}\big( (0,0, \frac{3}{2^{k+2}})+\frac{1}{2^{k+4}} x  \big )\bigg)}{\frac{1}{2^{2k+8}}} ,
\end{equation*}
where $\ell_k(x)$ is the supporting plane of $U$ at $(0,0, \frac{3}{2^{k+2}})$, $A_{\mu_k}$ is given by \eqref{def-Amu} for $\mu=\mu_k$.
Then, $W_k\in C^{2,\alpha_{k}} (\overline{\big(V_{\mu_k}\bigcap B_{2}((0,0))\big)\times (-2,2) })$ for some $\alpha_k\in(0,1)$ solves
\begin{equation}
\begin{split}
&\det D^2 W_k=1\quad \text{in}\quad \big(V_{\mu_k}\bigcap B_{2}((0,0))\big)\times (-2,2),\\
&W_k=\frac{|x|^2}{2} \quad \text{on}\quad \partial\bigg( \big(V_{\mu_k}\bigcap B_{2}((0,0))\big)\times (-2,2) \bigg)\bigcap \partial(V_{\mu_k}\times\mathbb{R})
\end{split}
\end{equation}
with
\begin{equation}\label{D2w-x3}
D^2W_k=I_{3\times 3} \quad\quad \text{on    }\partial\bigg( \big(V_{\mu_k}\bigcap B_{2}((0,0))\big)\times (-2,2) \bigg) \cap\{x|x_1=x_2=0\}.
\end{equation}
Then, Lemma \ref{lem-expansion} implies
\begin{equation}\label{Uexpansion}
|W_k(x_1,x_2,x_3)- \frac{|x|^2}{2} -c_k(x_3)r'^{\frac{1}{\mu_k}}\sin\frac{\theta'}{\mu_k}|=O(r'^3)
\end{equation} for $x_3\in [-3/2,3/2]$, where $(r',\theta')$ is the coordinates of the point $(x_1,x_2)$ in the polar coordinates.

Note that $\mu_k\rightarrow \frac{1}{2}$ when $k\rightarrow\infty$.
Hence, if there exist infinitely many $k$ and $x_3\in [-1,1]$ such that $c_k(x_3)\neq 0$, then for any fixed $\alpha\in(0,1)$, $U\notin C^{2,\alpha}$ near $0$. Otherwise, by Lemma \ref{lem-expansion},
$|D^2 W_{k}(x_1,x_2,x_3)-I_{3\times 3}|\leq Cr'$. Hence, in the following, we assume $c_k(x_3)\equiv 0$, $x_3\in [-1,1]$.

Let $\widetilde{U}\in C^2(\overline{\Omega})$ be the convex solution of
\begin{equation}\label{C2alpha-2}
\begin{split}
\det D^2 \widetilde{U}=&\widetilde{F}(x),\quad \text{in}\quad \Omega,\\
\widetilde{U}=&\varphi(x),\quad \text{on}\quad \partial  \Omega,
\end{split}
\end{equation}
where $\widetilde{F}\in C^{0,1}(\overline{\Omega})$ is a positive function satisfying:
\begin{equation*}
\begin{cases}
F\leq \widetilde{F}< \det D^2 \varphi \quad \text{ in }\Omega, \\
\widetilde{F}= \det D^2 \varphi\quad \text{ on each vertex of }\Omega,\\
\widetilde{F}(x_1,x_2,x_3)=\widetilde{G}(x_3)+\frac{x_1+x_2}{10} \quad \text{ near }0.
\end{cases}
\end{equation*}

We now prove  for any fixed $\alpha\in(0,1)$, $\widetilde{U}\notin C^{2,\alpha}$ for $x$ sufficient close to $0$.
Consider
\begin{equation*}
\widetilde{W_k}(x)=\frac{(\widetilde{U}-\widetilde{\ell}_k)\bigg(A_{\mu_k}^{-1}\big( (0,0, \frac{3}{2^{k+2}})+\frac{1}{2^{k+4}} x  \big )\bigg)}{\frac{1}{2^{2k+8}} },
\end{equation*}
where $\widetilde{\ell_k}(x)$ is the supporting plane of $\widetilde{U}$ at $(0,0, \frac{3}{2^{k+2}})$. Then, $\widetilde{W_k}$ solves
\begin{equation}
\begin{split}
&\det D^2\widetilde{ W_k}=1+h_k(x)\quad \text{in}\quad \big(V_{\mu_k}\bigcap B_{2}((0,0))\big)\times (-2,2),\\
&\widetilde{W_k}=\frac{|x|^2}{2} \quad \text{on}\quad \partial\bigg( \big(V_{\mu_k}\bigcap B_{2}((0,0))\big)\times (-2,2) \bigg)\bigcap \partial(V_{\mu_k}\times\mathbb{R})
\end{split}
\end{equation}
for $h_k(x)\in C^{0,1}$ satisfying
\begin{equation*}
\frac{1}{C}\frac{|x|}{2^{k+4}}\le h_k(x)\le \frac{C|x|}{2^{k+4}}
\end{equation*}
and  $D^2\widetilde W_k(0)=I_{3\times 3}.$ Then,
$\widetilde{W_k}\in C^{2,\beta_k} (\overline{\big(V_{\mu_k}\bigcap B_{2}((0,0))\big)\times (-2,2) })$ for some $\beta_k\in(0,1)$.

By Hopf's lemma, for small $\delta>0$, there exists $\epsilon_{\delta}>0$ such that, $\forall 0<\epsilon\leq \epsilon_{\delta}$, there holds
\begin{equation*}
\widetilde{W_{k}} \leq W_{k}-\epsilon r'^{\frac{1}{\mu_k}}\sin\frac{\theta'}{\mu_k} \quad\text{on }\big(V_{\mu_k}\bigcap \partial B_{\delta}((0,0))\big)\times [-1,1].
\end{equation*}
Define
\begin{equation*}
W'=W_{k}-\epsilon r'^{\frac{1}{\mu_k}}\sin\frac{\theta'}{\mu_k}+\delta^{2}\epsilon x_{3}^4.
\end{equation*}
Then, $\widetilde{W_k}\leq W'$ on $\partial \left((V_{\mu_k}\bigcap  B_{\delta}((0,0)))\times [-1,1]\right)$.
A direct computation also yields that
\begin{equation*}
\det D^2 W' \leq \det D^2\widetilde{W_{k}},  \quad \text{in}\quad  \big(V_{\mu_k}\cap  B_{\delta}((0,0))\big)\times [-1,1]
\end{equation*}
when $\delta>0$ and $\epsilon>0$ are small enough.

By the maximum principle, we have
\begin{equation*}
\widetilde{W_k}\leq W'_k,\quad \text{in}
\quad \big(V_{\mu_k}\cap  B_{\delta}((0,0))\big)\times [-1,1].
\end{equation*}
 In particular, we have
\begin{equation*}
\widetilde{W_{k}}((x_1,x_2,0))\leq W_{k}((x_1,x_2,0))-\epsilon r'^{\frac{1}{\mu_k}}\sin\frac{\theta'}{\mu_k}.
\end{equation*}
Therefore, $\widetilde{U} \notin C^{2,\alpha}(\overline{\Omega})$ for any $\alpha\in(0,1)$.
\end{proof}
\smallskip
\smallskip
In application, it is rather difficult to construct a desired $C^2$ sub-solution $\underline u$. In the following, we give a checkable criterion to guarantee the global $C^{2,\alpha}-$estimates.
\begin{definition}
A function $\varphi\in C^1(\overline{\Omega})$ is called {\bf uniformly convex on $\partial\Omega$} if and only if there holds
\begin{equation*}
\varphi(x)-\varphi(y)-\nabla \varphi(y)\cdot (x-y)\ge c|x-y|^2,\quad \forall x,y\in \partial\Omega
\end{equation*}
for some positive  constant $c$.
\end{definition}

\begin{theorem}\label{thm6.3}
Let $\Omega$  be a bounded convex $n-$polytope.  Let $0<f\in C^{\beta}(\overline{\Omega})$ and $\varphi\in C^{2,\beta}(\overline \Omega)$ be two given functions such that   $\varphi $ satisfies conditions (C1),(C2). Moreover, $\varphi$ is uniformly convex on $\partial\Omega$.
Then, the following conclusions hold true.
\begin{itemize}
	\item[1.] $n=2$. There exists a small positive constant $\varepsilon>0$ such that \eqref{intro-1} admits a unique solution $u\in C^{2,\alpha}(\overline{\Omega})$ provided that $f\le \varepsilon$.
	\item[2.] $n=3$. Let $p_i$, $i=0,\cdots,N$ be the vertices of $\Omega$. There exist small constants $\delta,\varepsilon(\delta),\omega(\delta)>0$ such that if \begin{equation*}
\begin{split}
 f(x)&\le  f(y)+\omega(\delta),\quad \forall y\in \Gamma_{0}(\Omega),\quad \forall x\in B_{\delta}(y)\cap\overline \Omega,\\
f(x)&\le \varepsilon(\delta),\quad x\notin B_{\delta^6}(\Gamma_{0}(\Omega)),
\end{split}
\end{equation*}
\end{itemize}
then, \eqref{intro-1} adimits a unique solution $u\in C^{2,\alpha}(\overline{\Omega})$ for some $\alpha\in (0,1)$.
\end{theorem}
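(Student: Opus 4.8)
The plan is to reduce Theorem~\ref{thm6.3} to Theorems~\ref{mainthm1} and~\ref{mainthm2} by \emph{manufacturing}, from the hypotheses on $\varphi$ and $f$, a globally $C^2$, convex sub-solution $\underline u\in C^2(\overline\Omega)$ with $\underline u=\varphi$ on $\partial\Omega$ and $\det D^2\underline u\ge f$ in $\overline\Omega$, whose normalized tangent cones and determinant gaps along $\Gamma_{n-3}(\Omega)$ and $\Gamma_{n-2}(\Omega)$ are controlled so that condition (C3) and one of (C4),(C5) of Theorem~\ref{mainthm2} hold. Once such an $\underline u$ exists, Theorem~\ref{mainthm1} produces a convex solution $u\in C^2(\overline\Omega)$ of~\eqref{intro-1}, Theorem~\ref{mainthm2} upgrades it to $u\in C^{2,\alpha}(\overline\Omega)$ for some $\alpha\in(0,1)$, and uniqueness follows from the comparison principle for Alexandrov solutions. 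Thus the whole content lies in the construction of $\underline u$, and the quantitative assumptions on $f$ are precisely what make it possible.

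For $n=2$ one has $\Gamma_{n-3}(\Omega)=\varnothing$, so conditions (C1)--(C3) are vacuous; Theorem~\ref{mainthm1} then yields $u\in C^2(\overline\Omega)$ as soon as \emph{some} $C^2$ convex sub-solution is available, and only (C4) on $\Gamma_0(\Omega)$ must be arranged. I would build $\underline u$ locally-to-globally: at each vertex $p$, writing the two incident edges as $\{t_1=0\},\{t_2=0\}$ in affine coordinates, the uniform convexity of $\varphi$ on $\partial\Omega$ forces $D^2\varphi(p)$ to have strictly positive $t_1t_1$- and $t_2t_2$-entries, so for a suitable constant $A_p$ the function $\varphi+A_p\,t_1t_2\,\zeta_p$ ($\zeta_p$ a cut-off) is $C^{2,\beta}$, agrees with $\varphi$ on $\partial\Omega$ near $p$, and is \emph{uniformly convex} near $p$, with Hessian bounded below by $c_0I$, $c_0>0$ depending only on $\Omega$, $\|\varphi\|_{C^{2,\beta}(\overline\Omega)}$ and the convexity constant of $\varphi$; along the relative interior of each edge the uniformly convex trace of $\varphi$ is extended similarly. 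These models glue to $\underline u\in C^{2,\beta}(\overline\Omega)$, $\underline u=\varphi$ on $\partial\Omega$, $D^2\underline u\ge c_0I$, the gluing errors absorbed by the strict lower bound. Taking $2\varepsilon:=c_0^2$, for $f\le\varepsilon$ we get $\det D^2\underline u\ge c_0^2>f$ on $\overline\Omega$, hence (C4), and Theorems~\ref{mainthm1}--\ref{mainthm2} apply.

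For $n=3$, $\Gamma_{n-3}(\Omega)=\Gamma_0(\Omega)$ (vertices) and $\Gamma_{n-2}(\Omega)=\Gamma_1(\Omega)$; here I would follow the scheme of Proposition~\ref{prop5.1:label}. By (C2) the polytope is simple, and after normalizing $D^2\varphi(p_i)=I$ the tangent cone at $p_i$ is affinely $(\R_+)^3$, so there is a uniformly convex quadratic $P_i$ with $P_i(p_i)=|\nabla P_i(p_i)|=0$, $\Theta(P_i,p_i,\Omega)<\tfrac\pi2$. Using the $P_i$, the vertex-exposing functionals $\ell_{p_i}$ and the profile $g$ of Proposition~\ref{prop5.1:label}, one builds a convex $\hat u\in C^2(\overline\Omega)$ with $\hat u=\varphi$ on $\partial\Omega$ that near each $p_i$ has the strong $A$-condition and determinant $\ge f$ there (here $f(x)\le f(p_i)+\omega(\delta)$ on $B_\delta(p_i)$ is used: the near-vertex profile has $\det D^2\hat u$ within a controlled amount of $f(p_i)$ and $\omega(\delta)$ lies below that slack), and off $B_{\delta^6}(\Gamma_0(\Omega))$ is uniformly convex with $\det D^2\hat u\ge c_1>\varepsilon(\delta)$. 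One then takes $\underline u$ solving $\det D^2\underline u=\det D^2\hat u-\delta'\chi$ in $\Omega$, $\underline u=\varphi$ on $\partial\Omega$, with $\chi$ a cut-off vanishing near $\Gamma_0(\Omega)$ and $\delta'>0$ small; $\hat u$ is a $C^2$ convex sub-solution of this auxiliary problem obeying (C1) and the strong $A$-condition at the vertices, so Theorem~\ref{mainthm1} and Lemma~\ref{cd2-pC2-1} give $\underline u\in C^2(\overline\Omega)$, convex, with $\det D^2\underline u=\det D^2\hat u-\delta'\chi\ge f$. Since $D^2\underline u$ is continuous and $\Theta(\underline u,p_i,\Omega)<\tfrac\pi2$ strictly, the strong $A$-condition propagates to a neighborhood of $\Gamma_0(\Omega)$ within $\Gamma_1(\Omega)$, while off that neighborhood $\det D^2\underline u\ge c_1>\varepsilon(\delta)$ gives (C4); together this yields (C3) and one of (C4),(C5), so Theorems~\ref{mainthm1}--\ref{mainthm2} apply.

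The main obstacle is the construction of $\underline u$: one must simultaneously match $\varphi$ on $\partial\Omega$, preserve convexity, keep the normalized tangent cones (strictly) non-obtuse along $\Gamma_{n-3}$ and $\Gamma_{n-2}$, and keep $\det D^2\underline u\ge f$. It is exactly the failure of these to be jointly realizable in general — there is no globally uniformly convex $C^2$ extension of a generic $\varphi|_{\partial\Omega}$ when $n=2$, and for $n=3$ one must delicately balance the determinant deficit of the near-vertex profile against $\omega(\delta),\varepsilon(\delta)$, while $\Gamma_{n-3}(\Omega)$ already contains positive-dimensional faces once $n\ge4$ — that confines the statement to $n=2,3$. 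Uniqueness, in contrast, is immediate from the maximum principle for $\det D^2$.
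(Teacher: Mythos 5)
Your plan — manufacture a globally $C^2$, convex sub-solution $\underline u$ matching $\varphi$ on $\partial\Omega$ and then invoke Theorems~\ref{mainthm1}--\ref{mainthm2} — is genuinely different from the paper's, and it has two real gaps.

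\textbf{The $n=2$ gluing is asserted, not proved.} You write that the local models $\varphi+A_p t_1 t_2\zeta_p$ near vertices and analogous edge models ``glue to $\underline u\in C^{2,\beta}(\overline\Omega)$, $\underline u=\varphi$ on $\partial\Omega$, $D^2\underline u\ge c_0 I$, the gluing errors absorbed by the strict lower bound.'' This is precisely the step the paper flags as nontrivial: ``it is non-trivial to extend a boundary uniformly convex function $\varphi\in C^\infty(\partial\Omega)$ to a uniformly convex function $\varphi\in C^2(\overline\Omega)$.'' The cutoff introduces terms like $\nabla\zeta_p\otimes\nabla(t_1t_2)$ and $t_1t_2\,D^2\zeta_p$ in the Hessian whose size is not obviously below $c_0$, and one must also transition between boundary models and some interior model without losing convexity. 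This would need a careful quantitative argument, not a one-line dismissal. Notice that if your construction were truly unconditional, the smallness of $f$ would play no role in obtaining a sub-solution — it would only be used to ensure $\det D^2\underline u\ge f$ — which is a stronger statement than what the paper proves and therefore deserves proof.

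\textbf{The $n=3$ step misquotes Proposition~\ref{prop5.1:label}.} You claim that ``one builds a convex $\hat u\in C^2(\overline\Omega)$ with $\hat u=\varphi$ on $\partial\Omega$'' from the $P_i$, $\ell_{p_i}$ and the profile $g$. The construction in Proposition~\ref{prop5.1:label} does \emph{not} match a prescribed boundary datum: the function $u^{(0)}=\sum_i g(C_0\ell_{p_i^{(0)}}-P_i^{(0)})$ (and the modified $\hat u$) has its own boundary values, unrelated to $\varphi$. Attempting to repair this by solving the auxiliary problem $\det D^2\underline u=\det D^2\hat u-\delta'\chi$, $\underline u=\varphi$ on $\partial\Omega$, is circular: to apply Theorem~\ref{mainthm1} to this auxiliary problem you would already need a $C^2$ convex sub-solution equal to $\varphi$ on $\partial\Omega$, which is exactly what you are trying to build. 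In the paper's own proof of Theorem~\ref{thm6.3} the comparison function $u_0=H+\sum_i g_i+\delta^6\chi|x|^2$ only satisfies $u_0\le\varphi$ \emph{on} $\partial\Omega$ (the proof explicitly verifies this inequality, not equality); it is a one-sided barrier for $u$, not a sub-solution in the sense of Theorem~\ref{mainthm1}.

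\textbf{What the paper actually does.} Instead of constructing a sub-solution, the paper works directly with the Alexandrov solution $u$: it builds a convex barrier $u_0$ with $u_0\le\varphi$ on $\partial\Omega$ and $\det D^2 u_0\ge f$ in $\Omega$, so $u\ge u_0$ by the comparison principle; this lower bound is exactly what is needed to feed the Liouville-type lemmas (Lemma~\ref{lemma-liou3} and Lemma~\ref{lem-liou-perturb}), which pin down the blow-up of $u$ at each vertex (and along the edges for $n=3$) to be the quadratic profile $\tfrac12|x|^2+\sqrt{1-f(p)}\,x_1x_2$. This yields $u\in C^2(\overline\Omega)$ directly, and then $u$ is used \emph{as its own sub-solution} in Theorem~\ref{mainthm2} (the strong $A$-condition on the vertices is supplied automatically by the blow-up profile once $f(p)<1$, which is why the smallness of $f$ is essential). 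The smallness of $f$ in dimension $2$ enters via the barrier $u\ge\frac1C|x|^2$ needed for Lemma~\ref{lemma-liou3}, not via any argument about determinant gaps of a hand-built $\underline u$. Your reduction to a sub-solution is conceptually tempting, but the nonexistence (or at least non-triviality) of such a sub-solution is the very obstacle the paper is designed to circumvent.
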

\begin{proof}
Step 1. $n=2$. Up to an affine transformation, we may assume $0$ is a vertex of $\Omega$ and the tangent cone of $\Omega$ at $0$ is $(\mathbb R_+)^2$. Without loss of generality, we also assume
\begin{equation*}
\varphi_{x_1x_1}(0)=\varphi_{x_2x_2}(0)=1,\quad \frac{1}C |x|^2\le \varphi(x)\le C|x|^2,\quad x\in \partial\Omega
\end{equation*}
for some positive constant $C>1$ depending only on the uniformly convexity of $\varphi$ on $\partial\Omega$. Hence, if $f\le \varepsilon$ for $\varepsilon>0$ small, then by standard maximum principle we know
\begin{equation*}
u\ge \frac{1}{C}|x|^2,\quad x\in \Omega.
\end{equation*}
Then, by Lemma \ref{lemma-liou3}, there holds
\begin{equation*}
u(x)=\frac 12 |x|^2+\sqrt{1-f(0)}x_1x_2+o(|x|^2),\quad \text{near}\quad 0.
\end{equation*}
From this, by rescaling argument, we know $u\in C^2(\overline{\Omega})$. Taking $u$ as the $C^2(\overline{\Omega})$ sub-solution, we conclude from Theorem \ref{mainthm2} that  $u\in C^{2,\alpha}(\overline{\Omega})$.

Step 2. $n=3$. Let  $p_0=0$  be a vertex of $\Omega$. Since $\varphi \in C^{2,\beta}(\overline{\Omega})$, there exists a uniformly convex  quadratic polynomial $P_0(x)$ such that
\begin{equation*}
|\varphi(x)-P_0(x)|\le \|\varphi\|_{C^{2,\beta}(\partial\Omega)}|x|^{2+\beta},\quad x\in \partial \Omega,\quad \det D^2 P_0=f(0).
\end{equation*}
After an affine transformation, we may assume
\begin{equation*}
P_0(x)=\frac 12|x|^2+\sqrt{1-f(0)}x_1x_2
\end{equation*}
and the tangent cone of $\Omega$ at $0$ contains $x_1=0$ and $x_2=0$ as its $2-$faces.
Then, we can choose  $\kappa=2\omega(\delta)$
 small such that
$$P_{0,\kappa}(x)=(1/2-\kappa)|x|^2+(\sqrt{1-f(0)}-\frac{4\kappa}{\sqrt{1-f(0)}})x_1x_2$$
satisfies $\det D^2 P_{0,\kappa}\ge f(0)+2\omega(\delta)$.
Then, there holds
\begin{equation*}
\begin{split}
\varphi\ge & P_0- \|\varphi\|_{C^{2,\beta}(\partial\Omega)}|x|^{2+\beta}\\
\ge & P_{0,\kappa}+\omega(\delta)|x|^2-9\|\varphi\|_{C^{2,\beta}(\partial\Omega)}\delta^\beta|x|^{2}\\
\ge & P_{0,\kappa}+\frac 12 \omega(\delta)|x|^2,
\end{split} \quad \text{in}\quad B_{2\delta(0)}\cap \partial\Omega,
\end{equation*}
provided $\omega(\delta)>>\delta^\beta$.
Let $h(t)$ be the solution of the following equation
\begin{equation*}
h''(t)=\min\left(1,2\left(1-\frac{t}{\delta^3}\right)^+\right),\quad h(0)=h'(0)=0.
\end{equation*}
Suppose $P_{0,\kappa}(x)=\frac 12 |Ax|^2$ for some positive matrix $A$.
Introduce a $C^2$-smooth convex function
\begin{equation}
H(x)=h(|Ax|).
\end{equation}
Then, from the construction, we know $H(x)$ is convex and satisfies
\begin{equation*}
H(x)=
\begin{cases}=P_{0,\kappa}(x),\quad |Ax|\le \frac{\delta^3}{2},\\
\le P_{0,\kappa}(x),\quad  \frac{\delta^3}{2} \le |Ax|\le \delta^3\\
\le \delta^3|Ax|-\delta^6/2\le P_{0,\kappa}(x),\quad |Ax|\ge \delta^3.
\end{cases}
\end{equation*}
For the remaining vertices $p_i$, $i=1,\cdots,N$ of $\Omega$, we set
\begin{equation*}
g_1(x)=g(C_0 (x-p_1)\cdot \nu_1-2|f|_{L^\infty(\Omega)}^{\frac 1n}|x|^2)+\frac{5}{4}\varepsilon_0
\end{equation*}
where $g(t)$ is given by Proposition \ref{prop5.1:label},  $\nu_1$ is the inner normal of $\Omega$ at $p_1$, and $\varepsilon_0^{-1}\sim \delta^{-3}$. $g_i$, $i=2,\cdots,N$ can be constructed similarly.
Let $\chi(x)\in [0,1]$ be a cut-off function such that $\chi\equiv 1$ in $\Omega\backslash(\cup_{i=0}^N B_{\delta^6}(p_i))$ and $\chi\equiv 0$ in $\Omega \cap (\cup_{i=0}^N B_{\frac 12\delta^6}(p_i))$. Denote
\begin{equation*}
u_0=H(x)+\sum_{i=1}^N g_i+ \delta^6 \chi\cdot |x|^2.
\end{equation*}
\par We first compare the boundary values of $u_0,u$.
On $\partial\Omega \cap B_\delta(0)$, we have
\begin{equation*}
u_0(x)=H(x)+ \delta^6 \chi\cdot |x|^2\le P_{0,\kappa}(x)+\delta^6 |x|^2\le \varphi(x).
\end{equation*}
On $\partial\Omega \backslash B_\delta(0)$, we have
\begin{equation*}
u_0(x)\le \delta^3 |Ax|+C(\delta^6 +\delta^3)\le \frac{\delta |x|}{2C}\le \varphi(x).
\end{equation*}
\par Then, we  compare the values of $\det D^2 u_0,\det D^2 u$.
In $\Omega\cap B_{\delta}(0)$, we have
\begin{equation*}
\begin{split}
\det D^2 u_0\ge& \det D^2 (P_{0,\kappa}+\delta^6 \chi |x|^2)\\
\ge& \det D^2 P_{0,\kappa}-C\delta^6\\
\ge& f(x)=\det D^2 u.
\end{split}
\end{equation*}
In $\Omega\cap \{g_i<\varepsilon_0\}$, we have
\begin{equation*}
\det D^2 u_0\ge \det D^2 (g_i+\delta^6 \chi |x|^2)\ge 2|f|_{L^\infty(\Omega)}-C\delta^6 \ge \det D^2 u.
\end{equation*}
In the remaining domain, we have
\begin{equation*}
\det D^2 u_0\ge C\delta^{18}\ge \varepsilon \geq \det D^2 u.
\end{equation*}

\par Hence, by maximum principle, we have $u\ge u_0$. This implies
\begin{equation*}
\frac{u_\lambda}{\lambda^2}\ge P_{0,\kappa},\quad \lambda\rightarrow 0
\end{equation*}
up to a subsequence. By Lemma \ref{lem-liou-perturb}, one knows $\frac{u_\lambda}{\lambda^2}\rightarrow \frac{1}{2}|x|^2+\sqrt{1-f(0)}x_1x_2$. Let $x^k$ be a sequence of points such that $\lim_{k \rightarrow +\infty}x^k=0$. By interior and boundary regularity, we only consider the case that
\begin{equation*}
\frac{x^k}{|x^k|}\rightarrow \bar y\in \mathbb S^2\cap \{x_1=x_2=0\}.
\end{equation*}
Then, we consider the point $y^k=(0,0,x^k_3)$. After a subtraction of a linear function and affine transformation, we may assume $u(y^k)=\nabla u(y^k)=0$. We can repeat the above construction to get a similar conclusion that
\begin{equation*}
\frac{u(\lambda x+y^k)}{\lambda^2}\rightarrow \frac{1}{2}|x|^2+\sqrt{1-f(0)}x_1x_2,\quad \text{in}\quad (\mathbb R^+)^2\times \mathbb R
\end{equation*}
by using Lemma \ref{lemma-liou3}. This implies $u$ is $C^2$ at $0$.
\par For any point $p=(0,0,a)$ along the edge $x_1=x_2=0$, we can consider two cases.
\\ (1). $a\in (0,\delta/2)$. Then, the above construction of sub-solution also holds, i.e., we can prove
\begin{equation*}
u(x)\ge P_{0,\kappa}(x-p)+u(p)+\nabla u(p)\cdot (x-p),\quad \text{near}\quad p.
\end{equation*}
Then, by Remark \ref{rem-face-2-a} and  Lemma \ref{lemma-liou3}, we know
$$\frac{u(\lambda x+p)-u(p)-\lambda x\cdot \nabla u(p)}{\lambda^2}\rightarrow \sum_{i,j=1}^{n}q_{ij}(p)x_{i}x_{j},$$
where
\begin{equation*}
q_{ij}(0,0,a)=\varphi_{ij}(p),\quad i,j=1,\cdots,n,\quad i+j\ne 3.
\end{equation*}and
$q_{12}(p)$ is the big root of the following univariate quadratic equation
\begin{equation*}
A q_{12}^2(p)+Bq_{12}(p)+C=0
\end{equation*}
which is equivalent to the equation $\det D^2 q=f(p)$.

(2). $a>\frac 12 \delta$.  Then, from the boundary condition, we know
\begin{equation*}
u(x)\ge u(p)+\nabla u(p)\cdot (x-p)+\frac{1}{C}|x-p|^2,\quad \text{on}\quad \partial\Omega.
\end{equation*}
Consider $\psi(x)=u(p)+\nabla u(p)\cdot (x-p)+\frac{1}{2C}|x-p|^2$. Near vertex $p_i$, we set \begin{equation*}
g_i(x)=g(C_0(x-p_i)\cdot \nu_i -2|f|_{L^\infty(\Omega)}|x-p_i|^2)+\frac{5}{4}\varepsilon_0
\end{equation*}
where $g(t)$ is given by Proposition \ref{prop5.1:label} and $\nu_i$ is the inner normal of $\Omega$ at $p_i$. Here $\varepsilon_0\sim\delta^3$. Let
\begin{equation*}
u_p(x)=\psi(x)+\sum_{i=0}^N g_i(x).
\end{equation*}
Then, by assumptions and maximum principle, we know that  $u(x)\ge u_p(x)$ in $\Omega$.  When $\varepsilon$ is sufficiently small,  by Remark \ref{rem-face-2-a} and  Lemma \ref{lemma-liou3},  we know that
\begin{equation*}
\frac{u(\lambda x+p)-u(p)-\lambda x\cdot \nabla u(p)}{\lambda^2}\rightarrow \sum_{i,j=1}^{n}q_{ij}(p)x_{i}x_{j},\quad \text{in}\quad (\mathbb R^+)^2\times \mathbb R.
\end{equation*}
where
\begin{equation*}
q_{ij}(p)=\varphi_{ij}(p),\quad i,j=1,\cdots,n,\quad i+j\ne 3,
\end{equation*}
and
$q_{12}(p)$ is the big root of the following univariate quadratic equation
\begin{equation*}
A q_{12}^2(p)+B q_{12}(p)+C=0
\end{equation*}
which is equivalent to the equation $\det D^2 q=f(p)$.
From (1), (2), we know $u$ is also $C^2$ along the edges.
\end{proof}

\section{Appendix}
In this Appendix, we give the following simple interpolation inequality.
\begin{lemma}\label{lemma-interpolation}
Let $f\in C^{1,\alpha}([0,1])$, $\alpha \in (0,1]$ be such that
\begin{equation*}
\|f\|_{L^{\infty}([0,1])}\leq A,\quad \|f'\|_{C^{0,\alpha}([0,1])}\leq B
\end{equation*}
 for two positive constants $A$ and $B$.
Suppose $A\leq B$.
Then, there holds
$$\|f'\|_{L^{\infty}([0,1])}\leq 6A^{\frac{\alpha}{1+\alpha}}B^{\frac{1}{1+\alpha}},$$
and also
\begin{equation*}
[f]_{C^{\frac{\alpha}{2}}([0,1])}\le (12 B^{\frac 1{1+\alpha}}+B) A^{\frac{\alpha}{2(1+\alpha)}}.
\end{equation*}
\end{lemma}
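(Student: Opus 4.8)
The statement is the classical first–order interpolation estimate on an interval, and the plan is to prove it by a pointwise optimisation, keeping enough control on constants to land on the two displayed bounds.

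First I would establish the gradient bound. Fix $x\in[0,1]$; since $\max(x,1-x)\ge\frac12$, for each $h\in(0,\frac12]$ we can choose $y=x+h$ or $y=x-h$ inside $[0,1]$, and from the identity $(y-x)f'(x)=f(y)-f(x)-\int_x^{y}\bigl(f'(t)-f'(x)\bigr)\,dt$ together with $\|f\|_{L^\infty}\le A$ and $[f']_{C^{0,\alpha}}\le\|f'\|_{C^{0,\alpha}}\le B$ one gets
\[
|f'(x)|\le \frac{2A}{h}+\frac{B}{1+\alpha}\,h^{\alpha}\le \frac{2A}{h}+B h^{\alpha},\qquad 0<h\le \tfrac12 .
\]
Minimising the right--hand side, the critical value is $h_\ast=(2A/\alpha B)^{1/(1+\alpha)}$. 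If $h_\ast\le\frac12$, substituting $h=h_\ast$ gives $|f'(x)|\le (2A)^{\alpha/(1+\alpha)}B^{1/(1+\alpha)}\bigl(\alpha^{1/(1+\alpha)}+\alpha^{-\alpha/(1+\alpha)}\bigr)$, and since $\alpha\in(0,1]$ elementary estimates ($2^{\alpha/(1+\alpha)}\le\sqrt2$, $\alpha^{1/(1+\alpha)}\le1$, $\alpha^{-\alpha/(1+\alpha)}\le e^{1/3}$) yield $|f'(x)|\le 6A^{\alpha/(1+\alpha)}B^{1/(1+\alpha)}$. If instead $h_\ast>\frac12$, this forces $B<2^{2+\alpha}A/\alpha$, and then the trivial bound $\|f'\|_{L^\infty}\le\|f'\|_{C^{0,\alpha}}\le B$ already suffices, because $(B/A)^{\alpha/(1+\alpha)}\le (2^{2+\alpha}/\alpha)^{\alpha/(1+\alpha)}\le 6$ for $\alpha\in(0,1]$; this is exactly where the hypothesis $A\le B$ enters as the natural one.

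Next, for the Hölder seminorm, set $M=\|f'\|_{L^\infty}\le 6A^{\alpha/(1+\alpha)}B^{1/(1+\alpha)}$ and use $|f(x)-f(y)|\le\min\{M|x-y|,\,2A\}$ for $x,y\in[0,1]$. If $M\le 2A$ then, as $|x-y|\le1$, $|f(x)-f(y)|\le M|x-y|\le M|x-y|^{\alpha/2}$, so $[f]_{C^{\alpha/2}}\le M\le 2A$. If $M>2A$, split at $t_0=2A/M\in(0,1)$: on $\{|x-y|\le t_0\}$ estimate $M|x-y|=M|x-y|^{1-\alpha/2}|x-y|^{\alpha/2}\le Mt_0^{1-\alpha/2}|x-y|^{\alpha/2}$, while on $\{|x-y|>t_0\}$ one has $2A\le 2A\,t_0^{-\alpha/2}|x-y|^{\alpha/2}=Mt_0^{1-\alpha/2}|x-y|^{\alpha/2}$, hence $[f]_{C^{\alpha/2}}\le M^{\alpha/2}(2A)^{1-\alpha/2}$. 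Inserting the gradient bound, the exponent of $A$ collapses to $\frac{2+\alpha}{2(1+\alpha)}=\frac{\alpha}{2(1+\alpha)}+\frac1{1+\alpha}$, so after factoring out $A^{\alpha/(2(1+\alpha))}$ and estimating the leftover $A^{1/(1+\alpha)}\le B^{1/(1+\alpha)}$ (again using $A\le B$), in all cases it remains to check the scalar inequality $c\,B^{(2+\alpha)/(2(1+\alpha))}\le 12\,B^{1/(1+\alpha)}+B$ for every $B>0$, with $c=6^{\alpha/2}2^{1-\alpha/2}\le\sqrt{12}$. Dividing by $B^{1/(1+\alpha)}$ and setting $u=B^{\alpha/(2(1+\alpha))}$ turns this into $cu\le 12+u^2$, which holds for all $u>0$ since $c^2<48$. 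This gives the second inequality.

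On the difficulty: there is no conceptual obstacle, the whole lemma being a one--variable optimisation. The care needed is bookkeeping: keeping the constraint $h\le\frac12$ in the gradient estimate and noticing that its boundary regime is precisely where $B\le C_\alpha A$, so the crude bound coming from the $C^{0,\alpha}$ norm closes that gap — which is why $A\le B$ (rather than any unrelated relation) is the right hypothesis; and recognising that the somewhat unusual constant $12\,B^{1/(1+\alpha)}+B$ is shaped so that its first summand controls the small--$B$ range and its second the large--$B$ range, making the final scalar inequality hold with an $\alpha$--independent numerical constant. I would present both of these as short explicit computations.
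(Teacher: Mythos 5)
Your proof of the gradient bound $\|f'\|_{L^\infty}\le 6A^{\alpha/(1+\alpha)}B^{1/(1+\alpha)}$ is correct and is essentially the paper's Taylor-expansion argument, with the slight refinement that you optimise in $h$ and treat the case $h_\ast>\tfrac12$ separately; the paper simply substitutes the specific choice $h=\tfrac12(A/B)^{1/(1+\alpha)}$, which is why it needs $A\le B$ to guarantee $h\le\tfrac12$. Both arrive at the same constant.

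For the second estimate, however, there is a genuine mismatch. You bound the H\"older seminorm of $f$ itself, using $|f(x)-f(y)|\le\min\{M|x-y|,\,2A\}$ and a split at $t_0=2A/M$. This is a correct proof of the displayed inequality read literally, but it is not the inequality the lemma is meant to assert: the paper's own proof estimates $\dfrac{|f'(x)-f'(y)|}{|x-y|^{\alpha/2}}$, so the intended conclusion is
\[
[f']_{C^{\alpha/2}([0,1])}\le\bigl(12B^{\frac{1}{1+\alpha}}+B\bigr)A^{\frac{\alpha}{2(1+\alpha)}},
\]
i.e.\ the $[f]$ in the statement is a typo for $[f']$. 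One can also see this from how the lemma is used in the proof of Theorem~\ref{mainthm2}: it is iterated to upgrade $L^\infty$-smallness of $u_r-P_0$ to $C^{2,\alpha/2}$-smallness, which requires a H\"older bound on a derivative, not on the function itself; the exponent $\frac{\alpha^3}{2(1+\alpha)^2}$ appearing there is exactly what two applications of the $[f']$-version produce.

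The needed step is in fact simpler than your optimised split. Use $|f'(x)-f'(y)|\le\min\{B|x-y|^{\alpha},\,2\|f'\|_{L^\infty}\}$ and cut at the fixed threshold $|x-y|=A^{1/(1+\alpha)}$. For $|x-y|\le A^{1/(1+\alpha)}$,
\[
\frac{|f'(x)-f'(y)|}{|x-y|^{\alpha/2}}=|x-y|^{\alpha/2}\cdot\frac{|f'(x)-f'(y)|}{|x-y|^{\alpha}}\le A^{\frac{\alpha}{2(1+\alpha)}}B,
\]
while for $|x-y|\ge A^{1/(1+\alpha)}$ the gradient bound you already established gives
\[
\frac{|f'(x)-f'(y)|}{|x-y|^{\alpha/2}}\le\frac{2\|f'\|_{L^\infty}}{A^{\frac{\alpha}{2(1+\alpha)}}}\le 12A^{\frac{\alpha}{2(1+\alpha)}}B^{\frac{1}{1+\alpha}}.
\]
Adding the two branches yields the constant $12B^{1/(1+\alpha)}+B$. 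Your argument for $[f]_{C^{\alpha/2}}$ is internally correct, but it proves a different (and weaker) statement than the one the paper proves and uses; you should replace the second half with the computation above for $f'$.
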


\begin{proof}
$\forall x\in[0,1]$, by Taylor's expansion, we have
$$f(y)-f(x)=f'(x)(y-x)+[f'(x+\theta(y-x))-f'(x)](y-x)$$
for some $\theta\in(0,1)$. Taking $y\in[0,1]$ such that $|y-x|=\frac{1}{2}\big(\frac{A}{B}\big)^{\frac{1}{1+\alpha}}\leq \frac{1}{2}$, one gets
$$\frac{1}{2}\big(\frac{A}{B}\big)^{\frac{1}{1+\alpha}}|f'(x)|\leq 2A+\frac{A}{2^{1+\alpha} }.$$
Hence, $|f'(x)|\leq 6A^{\frac{\alpha}{1+\alpha}}B^{\frac{1}{1+\alpha}} .$  Consider any two points $x,y\in [0,1]$.
\\ 1. $|x-y|\ge A^{\frac 1{(1+\alpha)}}$. Then
\begin{equation*}
\begin{split}
\frac{|f'(x)-f'(y)|}{|x-y|^\frac{\alpha}{2}}\le 2\frac{|f'|_{L^\infty([0,1])}}{A^{\frac \alpha{2(1+\alpha)}}} \le 12 B^{\frac 1{1+\alpha}}A^{\frac{\alpha}{2(1+\alpha)}}.
\end{split}
\end{equation*}
\\ 2. $|x-y|\le A^{\frac 1{(1+\alpha)}}$. Then
\begin{equation*}
\begin{split}
\frac{|f'(x)-f'(y)|}{|x-y|^\frac{\alpha}{2}}\le A^{\frac \alpha {2(1+\alpha)}}\frac{|f'(x)-f'(y)|}{|x-y|^\alpha} \le B A^{\frac{\alpha}{2(1+\alpha)}}.
\end{split}
\end{equation*}
This ends the proof of present lemma.
\end{proof}

\end{document}